\documentclass[12pt,a4paper]{article}
\usepackage{graphicx} 
\usepackage{amsmath}
\usepackage{authblk}
\usepackage{amsthm}
\usepackage{bbm}
\usepackage[margin=1in]{geometry} 
\usepackage{amssymb}
\usepackage{amsfonts}
\usepackage{float}
\usepackage{mathrsfs}
\usepackage{xcolor}
\newtheorem{theorem}{Theorem}
\newtheorem{lemma}[theorem]{Lemma}

\newtheorem{remark}{Remark}

\newcommand{\isep}{\mathrel{{.}\,{.}}\nobreak}

\renewcommand{\l}{\ell}

\newcommand{\E}{\mathbb{E}}

\newcommand{\ep}{\varepsilon}
\renewcommand{\P}{\mathbb{P}}

\title{On the Group Randomness of 0-1 Real Sequences from Binary Linear Codes}
\author{Chin Hei Chan}

\affil{\small{Hetao Institute of Mathematics and Interdisciplinary Sciences, Shenzhen, Guangdong, 518000, China \\ chenzhanxi@himis-sz.cn}}

\date{}

\begin{document}

\maketitle
\begin{abstract}
    In this paper, we study the group randomness of 0-1 real sequences derived from a binary linear code by investigating the spectral behaviour of a suitable normalization of the Gram matrix of a $p \times n$ random matrix whose rows are uniformly drawn from those 0-1 real sequences, where $y=p/n \in (0,1)$ is fixed. We show that as $n \to \infty$, its empirical spectral distribution converges to the Marchenko-Pastur law at a rate at least of the order $n^{-1/4}$ with high probability, and the fluctuation of its largest eigenvalue is asymptotically Gaussian with mean $p+1+y$ and variance $4y$, provided that the dual distance of the code is at least 5.
\end{abstract}
\begin{keywords}
    Group randomness, binary linear codes, random matrix theory, empirical spectral distribution, largest eigenvalue, Marchenko-Pastur law, Gaussian distribution.
\end{keywords}
\section{Introduction}
Group randomness is the term to describe joint randomness of a given class of real or complex sequences. This means that random matrices constructed from these sequences resemble properties satisfied by truly random matrices, that is, matrices whose entries are independently and identically distributed (i.i.d.). These matrices play a central role in signal processing applications such as compressed sensing \cite{CHJ}.

Group randomness was introduced by Babadi and Tarokh in \cite{Babadi2,Babadi}, who considered a Gram matrix constructed by picking at random $\pm 1$ real sequences derived from binary linear codes. They observed through simulation experiments that the empirical spectral distributions (ESDs) of the Gram matrices based on BCH codes and Gold sequences converge to the Marchenko-Pastur (MP) law as the length increases to infinity, a phenomenon satisfied by truly random sample covariance matrices, while that from pseudo-noise (PN) sequences does not. Xia and Xiong \cite{OQBT} confirmed this by showing that such behaviour holds as long as the minimum Hamming distance of the dual code is at least 5, which is achieved by the Gold codes exactly. Later in \cite{CTX}, the authors proceeded to prove that the convergence rate is at least of the order $n^{-1/4}$ where $n$ is the code length. In a more recent paper \cite{CLT}, the authors proved a more advanced group randomness property, that the central limit theorem holds for linear spectral statistics of monomials on the same random matrix over binary linear codes with dual distance at least 7. 

In this paper we study the group randomness of 0-1 real sequences derived from binary linear codes. While this construction looks simpler and more intuitive, the entries of the matrix whose rows are randomly chosen from these 0-1 sequences are non-central, unlike the previous construction. Nevertheless, we show that as long as the dual distance of the code is at least 5, the Gram matrix of our code matrix resembles the spectral behaviour of the non-central Gaussian sample covariance matrix \cite{Cheng} both globally and locally. That is, its ESD converges to the MP law with high probability at a rate at least of the order $n^{-1/4}$, and also that it has a dimension-growing largest eigenvalue with asymptotically Gaussian fluctuation as $n \to \infty$. The former is essentially the same as the $\pm 1$-based matrix model. However, the latter introduces a novel group randomness phenomenon for pseudo-random sequences in terms of a single eigenvalue.

Before stating explicitly and further comparing the results on the two different code-based random matrices, we introduce some notation.

Let $\mathcal{C}$ be a binary linear code of length $n$. Its dual code $\mathcal{C}^\bot$ consists of codewords which are orthogonal to all codewords of $\mathcal{C}$ under the usual inner product defined in $\mathbb{F}_2^n$. $\mathcal{C}^\bot$ is also a binary linear code of length $n$. The dual distance $d^\bot$ of the original code $\mathcal{C}$ is the minimum (Hamming) distance of $\mathcal{C}^\bot$.

The $\pm 1$ matrix model in \cite{OQBT,CTX} is constructed as follows. Let $\psi: \mathbb{F}_2 \to \mathbb{R}, a \mapsto (-1)^a$ be the standard additive character, and extend component-wise to $\mathbb{F}_2^n$. For a positive integer $p < n$, let $\Phi_\mathcal{C}$ be a $p \times n$ random matrix whose rows are uniformly and independently drawn from $\psi(\mathcal{C})$. Let $\mathcal{G}_\mathcal{C}$ be the Gram matrix of $\frac{1}{\sqrt{n}}\Phi_\mathcal{C}$, that is,
$$\mathcal{G}_\mathcal{C}=\frac{1}{n}\Phi_\mathcal{C}\Phi_\mathcal{C}^T.$$
Let $\lambda_1 \geq \lambda_2 \geq \cdots \geq \lambda_p$ be the eigenvalues of $\mathcal{G}_\mathcal{C}$. The spectral measure of $\mathcal{G}_\mathcal{C}$ is
$$\mu_\mathcal{C}:=\frac{1}{p}\sum_{i=1}^p \delta_{\lambda_i},$$
where $\delta_z$ is the Dirac measure at point $z$. Since the matrix $\mathcal{G}_\mathcal{C}$ is random, $\mu_\mathcal{C}$ is a random probability measure.

The strongest known result about $\mu_\mathcal{C}$, as proved in \cite[Theorem 1]{CTX}, can be stated as follows:
\begin{theorem}\label{mainthm0}
    Let $\{\mathcal{C}_n\}$ be a sequence of binary linear codes, so that $\mathcal{C}_n$ has length $n$ and dual distance $d_n^\bot$. Let $\Phi_{\mathcal{C}_n}$ be the $p_n \times n$ $\pm 1$ random matrix as constructed above, with $y:=p_n/n \in (0,1)$ fixed. Let $\mathcal{G}_{\mathcal{C}_n}$ be the Gram matrix of $\frac{1}{\sqrt{n}}\Phi_{\mathcal{C}_n}$, and $\mu_{\mathcal{C}_n}$ denote the spectral measure of $\mathcal{G}_{\mathcal{C}_n}$. Assume $d_n^\bot \geq 5$ for all $n$. Then as $n \to \infty$,
    \begin{equation}\label{Mu}
    |\mu_{\mathcal{C}_n}(\mathbf{I})-\varrho_{\mathrm{MP}_y}(\mathbf{I})| \prec n^{-1/4}
    \end{equation}
    uniformly for all intervals $\mathbf{I} \subset \mathbb{R}$.
\end{theorem}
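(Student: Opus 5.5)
The proof goes through the Stieltjes transform. We compare the Stieltjes transform of $\mu_{\mathcal{C}_n}$ with that of $\varrho_{\mathrm{MP}_y}$ on a spectral domain $z=E+i\eta$ with $\eta$ down to about $n^{-1/4}$. A standard smoothing inequality (a Bai-type inequality, or the Helffer--Sjöstrand formula) then converts this into the uniform interval bound \eqref{Mu}. The only input from coding theory is this: since $d_n^\bot \geq 5$, any four coordinates of a uniformly random codeword of $\mathcal{C}_n$ are independent and uniform on $\mathbb{F}_2^4$. Hence the entries of each row of $\Phi_{\mathcal{C}_n}$ are $4$-wise independent Rademacher variables, even though they are far from fully independent. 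The rows themselves are genuinely i.i.d.

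The key steps, in order, are as follows.
\begin{enumerate}
\item Set $s(z)=\frac1p\operatorname{tr}(\mathcal{G}-z)^{-1}$. Apply the Schur complement formula row by row. This is natural, because the rows are independent. Each diagonal resolvent entry then reads $G_{ii}=\bigl(-z-z\,\frac1n r_i^T\mathcal{R}^{(i)} r_i\bigr)^{-1}$, where $r_i$ is the $i$-th row and $\mathcal{R}^{(i)}$ is the resolvent of the $n\times n$ companion matrix built from the other rows. Crucially, $r_i$ is independent of $\mathcal{R}^{(i)}$.
\item Prove a large-deviation estimate for the quadratic form $\frac1n r^TAr$ with $A$ independent of $r$. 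The mean is $\frac1n\operatorname{tr}A$, because pairwise independence gives $\E r_ar_b=\delta_{ab}$. For the variance, $4$-wise independence gives
\[
\E\Bigl|\tfrac1n r^TAr-\tfrac1n\operatorname{tr}A\Bigr|^2\le \tfrac{C}{n^2}\operatorname{tr}(AA^*).
\]
With $A=\mathcal{R}^{(i)}$ this is $O(1/(n\eta))$. Only second moments are available here, not high moments, so this step yields bounds in probability only through Chebyshev's inequality.
\item Combine this with the martingale (Efron--Stein) concentration of $s(z)$ around $\E s(z)$, which follows from the independence of the rows with fluctuation $O(1/(n\eta))$, and with the rank-one interlacing bounds $|s-s^{(i)}|\le C/(p\eta)$. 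Together these give the self-consistent equation
\[
s=\frac{1}{1-y-z-yzs}+\Delta, \qquad |\Delta|\prec (n\eta^{2})^{-1/2},
\]
or a comparable error.
\item Use the stability of the MP quadratic equation to deduce $|s-s_{\mathrm{MP}}|\prec \Delta/\sqrt{\kappa+\eta}$. Feed this into the smoothing inequality and optimize $\eta$ to obtain the rate $n^{-1/4}$.
\end{enumerate}

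The main obstacle is the weak control in step 2. Only fourth-moment independence is available, so the fluctuation of the quadratic form can be controlled at the level of variance but not beyond. This forces the relatively large error term and hence the suboptimal rate $n^{-1/4}$. Uniformity over $z$, and the phrasing as $\prec$, must be recovered from a lattice of $z$'s, using the Lipschitz continuity of resolvents and the fact that the concentration in step 3 is of high-probability type. The delicate bookkeeping is to ensure that the Chebyshev-type bound in step 2 is only ever applied after averaging over $i$. The averaged quantity $\frac1p\sum_i(\cdot)$ enjoys the martingale concentration from the independence of the rows. Near the spectral edges, the stability factor $\sqrt{\kappa+\eta}$ requires care. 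Since $y<1$ is fixed, the support of $\varrho_{\mathrm{MP}_y}$ stays away from $0$, and the argument handles the hard edge without extra effort.
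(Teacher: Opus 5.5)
Your outline is correct and follows the same route as the proof this paper relies on. The paper does not reprove the statement but quotes it from \cite{CTX}, whose argument (recalled in Section~\ref{pthm1}) is a Stieltjes-transform comparison with $m_{\mathrm{MP}_y}$, holding with probability $1-n^{-D}$ down to $\eta\geq n^{-1/4+\tau}$ under $d^\bot\geq 5$, followed by a Helffer--Sj\"ostrand conversion to interval probabilities. The one point to state more carefully is where the $\prec$ control comes from: Azuma/McDiarmid bounded-difference concentration of $s(z)$ over the independent rows gives $|s-\E s|\prec 1/(\sqrt{n}\,\eta)$, not $1/(n\eta)$ (Efron--Stein alone gives only Chebyshev-type tails), while your fourth-moment variance bound should enter only inside expectations, showing that $\E s$ satisfies the self-consistent equation up to a deterministic error of order $1/(n\eta^{2})$ once a continuity argument in $\eta$ supplies a priori bounds on $1/(1-y-z-yzs^{(i)})$.
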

Here $\varrho_{\mathrm{MP}_y}$ is the MP measure with parameter $y$, defined by
$$\varrho_{\mathrm{MP}_y}(dx)=\frac{1}{2\pi xy}\sqrt{(x-a)(b-x)}{\mathbbm{1}_{[a,b]}}dx,$$
with $a=(1-\sqrt{y})^2$ and $b=(1+\sqrt{y})^2$, and the symbol $\prec$ is a standard notation for ``stochastic domination'' (see \cite{Knowles, Bloemendal}), so that (\ref{Mu}) means the following:

For any fixed $\ep > 0$ and $D > 0$, there exists $N=N(\ep,D) > 0$ such that for all $n > N$,
$$\P(|\mu_{\mathcal{C}_n}(\mathbf{I})-\varrho_{\mathrm{MP}_y}(\mathbf{I})| > n^{\ep-1/4}) < n^{-D}.$$

Next, we state the notations and main results with regard to the 0-1 code random matrix model studied in this paper.

 For $p < n$, let $\widetilde{\Phi}_\mathcal{C}$ be a $p \times n$ random matrix, whose rows are uniformly and independently picked from $\mathcal{C}$, treating 0 and 1 as real values. In order for the variance to align with the $\pm 1$ matrix setting, we consider the Gram matrix of $\frac{2}{\sqrt{n}}\widetilde{\Phi}_\mathcal{C}$ instead of $\frac{1}{\sqrt{n}}\widetilde{\Phi}_\mathcal{C}$, that is,
$$\widetilde{\mathcal{G}}_\mathcal{C}=\frac{4}{n}\widetilde{\Phi}_\mathcal{C}\widetilde{\Phi}_\mathcal{C}^T.$$
Let $\tilde{\lambda}_1 \geq \tilde{\lambda}_2 \geq \cdots \geq \tilde{\lambda}_p$ be the eigenvalues of $\widetilde{\mathcal{G}}_\mathcal{C}$. Our first main result is about the spectral measure $\tilde{\mu}_\mathcal{C}$ of $\widetilde{\mathcal{G}}_\mathcal{C}$, that is,
$$\tilde{\mu}_\mathcal{C}=\frac{1}{p}\sum_{i=1}^p \delta_{\tilde{\lambda}_i}.$$
\begin{theorem}\label{mainthm}
    Let $\{\mathcal{C}_n\}$ be a sequence of binary linear codes of length $n$ and dual distance $d_n^\bot$. Let $\widetilde{\Phi}_{\mathcal{C}_n}$ be the $p_n \times n$ 0-1 random matrix as constructed above, with $y:=p_n/n \in (0,1)$ fixed. Let $\widetilde{\mathcal{G}}_{\mathcal{C}_n}$ be the Gram matrix of $\frac{2}{\sqrt{n}}\widetilde{\Phi}_{\mathcal{C}_n}$, and $\tilde{\mu}_{\mathcal{C}_n}$ denote the spectral measure of $\widetilde{\mathcal{G}}_{\mathcal{C}_n}$. Assume $d_n^\bot \geq 5$ for all $n$, then as $n \to \infty$,
        $$|\tilde{\mu}_{\mathcal{C}_n}(\mathbf{I})-\varrho_{\mathrm{MP}_y}(\mathbf{I})| \prec n^{-1/4}.$$
\end{theorem}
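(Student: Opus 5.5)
The plan is to reduce Theorem \ref{mainthm} to Theorem \ref{mainthm0} with a finite-rank perturbation argument. Each codeword $c \in \mathcal{C}$ satisfies $\psi(c) = \mathbf{1} - 2c$ entrywise, where $\mathbf{1}$ is the all-ones vector of length $n$. Hence if we draw the rows of $\Phi_\mathcal{C}$ and $\widetilde{\Phi}_\mathcal{C}$ from the same uniformly chosen codewords, we get the coupling
$$2\widetilde{\Phi}_\mathcal{C} = J - \Phi_\mathcal{C}, \qquad J := \mathbf{1}_p \mathbf{1}_n^T .$$
Here $J$ is a deterministic matrix of rank one. Under this coupling $\widetilde{\Phi}_\mathcal{C}$ has exactly the law described before the theorem, and
$$\widetilde{\mathcal{G}}_\mathcal{C} = \frac{1}{n}(\Phi_\mathcal{C}-J)(\Phi_\mathcal{C}-J)^T .$$
We will compare the singular values of $\frac{1}{\sqrt n}(\Phi_\mathcal{C}-J)$ with those of $\frac{1}{\sqrt n}\Phi_\mathcal{C}$.

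The key tool is the rank interlacing inequality: if $A$ and $B$ are $p\times n$ matrices with $\mathrm{rank}(A-B)\le r$, then the singular values satisfy $\sigma_{i+r}(A)\le\sigma_i(B)\le\sigma_{i-r}(A)$. Since squaring is monotone on $[0,\infty)$, the eigenvalues $\lambda_i$ of $\mathcal{G}_\mathcal{C}$ and $\tilde\lambda_i$ of $\widetilde{\mathcal{G}}_\mathcal{C}$ interlace with shift $r=1$. A standard consequence is that the counting functions differ by at most one on every interval:
$$\bigl|\#\{i:\tilde\lambda_i\in\mathbf{I}\}-\#\{i:\lambda_i\in\mathbf{I}\}\bigr|\le 2 .$$
The bound $2$ allows a unit of slack at each endpoint of $\mathbf{I}$. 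Dividing by $p$ gives, deterministically and uniformly over all intervals,
$$|\tilde\mu_{\mathcal{C}}(\mathbf{I})-\mu_\mathcal{C}(\mathbf{I})|\le \frac{2}{p}=O(n^{-1}) .$$
To justify the interlacing, we write $\tilde{\mathcal{G}}$ via the $n\times n$ companion matrices, or equivalently use Weyl's inequalities for the Hermitian dilation. That dilation is a rank-two perturbation, which gives the same $O(1/p)$ bound with constant $4$ instead of $2$.

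The last step combines this with Theorem \ref{mainthm0}, which applies because $d_n^\bot\ge5$ and gives $|\mu_{\mathcal{C}_n}(\mathbf{I})-\varrho_{\mathrm{MP}_y}(\mathbf{I})|\prec n^{-1/4}$ uniformly in $\mathbf{I}$. By the triangle inequality,
$$|\tilde\mu_{\mathcal{C}_n}(\mathbf{I})-\varrho_{\mathrm{MP}_y}(\mathbf{I})| \le |\mu_{\mathcal{C}_n}(\mathbf{I})-\varrho_{\mathrm{MP}_y}(\mathbf{I})|+O(n^{-1}) .$$
Since $n^{-1}\le n^{\ep-1/4}$, the $O(n^{-1})$ term is absorbed into the stochastic domination bound, which proves the theorem.

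The only point needing care is the interval-counting statement under rank-one interlacing. It must hold uniformly over all intervals, including degenerate and unbounded ones and those containing the outlier $\tilde\lambda_1\approx p$. I will handle this with the Hermitian dilation and the Cauchy–Weyl interlacing for rank-$r$ perturbations. No probabilistic input beyond Theorem \ref{mainthm0} is needed. In particular, the dimension-growing largest eigenvalue affects only one eigenvalue, and hence only a mass of $1/p$.
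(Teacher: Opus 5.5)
Your proof is correct, but it takes a more direct route than the paper. Both arguments use the same coupling $2\widetilde{\Phi}=J-\Phi$ and the same rank-one interlacing $\lambda_{i+1}\le\tilde\lambda_i\le\lambda_{i-1}$ (this is Lemma~\ref{EI2}). The paper, however, does not use the interlacing to compare interval masses. It proves the resolvent-level comparison $|m_{\widetilde{\mathcal{G}}}(z)-m_{\mathcal{G}}(z)|\le 4/(p\eta)$ (Lemma~\ref{S}) and combines it with the Stieltjes-transform estimate (\ref{S2}) from \cite[Theorem 5]{CTX} on $\mathbf{S}_\tau$. It then re-runs the Helffer--Sj\"ostrand argument of \cite[Appendix]{CTX} for $\widetilde{\mathcal{G}}$.

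You instead turn interlacing into the deterministic bound $\sup_{\mathbf{I}}|\tilde\mu_{\mathcal{C}}(\mathbf{I})-\mu_{\mathcal{C}}(\mathbf{I})|\le 2/p$, and then invoke Theorem~\ref{mainthm0} as a black box. The bound holds because half-line counting functions differ by at most one, so interval counts differ by at most two. Your phrase ``at most one on every interval'' should read ``on every half-line''.

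Your route has several advantages. It is shorter and needs no resolvent analysis. It transfers any interval-level rate for $\mu_{\mathcal{C}}$ to $\tilde\mu_{\mathcal{C}}$ at cost $O(1/p)$. It is also automatically insensitive to the outlier $\tilde\lambda_1\approx p$. That eigenvalue lies far outside the bounded spectral domain $\mathbf{S}_\tau$ where (\ref{S2}) is available, so nothing in the Helffer--Sj\"ostrand step needs re-checking for $\widetilde{\mathcal{G}}$.

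The paper's route instead yields a Stieltjes-transform estimate for $\widetilde{\mathcal{G}}$ itself. That is the natural form when the only input for $\mathcal{G}$ is at the resolvent level, as in the subsequent Remark. Even there, though, your interlacing bound could transfer the final interval statement equally well.
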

The second result is about the limit and fluctuation of the largest eigenvalue $\tilde{\lambda}_1$ of $\widetilde{\mathcal{G}}_\mathcal{C}$.
\begin{theorem}\label{mainthm2}
    Let $\{\mathcal{C}_n\}$ be a sequence of binary linear codes of length $n$ and dual distance $d_n^\bot$. Let $\widetilde{\Phi}_{\mathcal{C}_n}$ be the $p_n \times n$ 0-1 random matrix as constructed above, with $y:=p_n/n \in (0,1)$ fixed. Let $\widetilde{\mathcal{G}}_{\mathcal{C}_n}$ be the Gram matrix of $\frac{2}{\sqrt{n}}\widetilde{\Phi}_{\mathcal{C}_n}$, and $\tilde{\lambda}_1$ denote the largest eigenvalue of $\widetilde{\mathcal{G}}_{\mathcal{C}_n}$. If $d_n^\bot \geq 5$ for all $n$, then as $n \to \infty, \tilde{\lambda}_1-(p_n+1+y)$ converges in distribution to the Gaussian distribution with mean $0$ and variance $4y$.
\end{theorem}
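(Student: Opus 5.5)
Write a row of $\widetilde{\Phi}_{\mathcal{C}}$ as $\frac12(\mathbf{1}-\psi(c))$, so that
$$\frac{2}{\sqrt n}\widetilde{\Phi}_{\mathcal{C}}=\frac{1}{\sqrt n}\bigl(\mathbf{1}_p\mathbf{1}_n^T-\Phi_{\mathcal{C}}\bigr),$$
where $\Phi_{\mathcal{C}}$ is the $\pm1$ code matrix from Theorem~\ref{mainthm0}. The plan is to treat $\widetilde{\mathcal{G}}_{\mathcal{C}}$ as a rank-one perturbation of a ``centered'' matrix. Set $X=\frac{1}{\sqrt n}\Phi_{\mathcal{C}}$ and $u=\mathbf{1}_n/\sqrt n$. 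Then $\frac{2}{\sqrt n}\widetilde{\Phi}_{\mathcal{C}}=\mathbf{1}_pu^T-X$. The spike has size $\|\mathbf{1}_p u^T\|=\sqrt p$, which is why $\tilde\lambda_1\approx p$.

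\textbf{Variational expression for the top eigenvalue.} Put $v=\mathbf{1}_p/\sqrt p$ and note that
$$\tilde\lambda_1=\sigma_1(\mathbf{1}_pu^T-X)^2 .$$
For a vector $w$, I compute $\|(\mathbf{1}_pu^T-X)^Tw\|^2$. The moment inputs are as follows. Since $d^\bot\ge 5$, the entries of a uniform codeword are 4-wise independent, so they behave like i.i.d.\ Rademacher variables up to fourth moments. Since $\mathbf{1}\in\mathcal{C}^{\bot}$ would give $d^\bot\le n$, this is harmless; the real requirement is that $Xu=\frac1n\Phi\mathbf{1}$ has entries $n^{-1}\sum_j\psi(c_{ij})$. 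These entries have mean $0$ and variance $1/n$ by pairwise independence, and they are independent across rows.

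I then take the standard secular approach. The top singular value $s$ of $A=\sqrt p\,vu^T-X$ satisfies
$$1=p\,\bigl(v^T(s^2-XX^T)^{-1}\cdots\bigr),$$
or, more simply, I use perturbation expansion in $1/\sqrt p$. Writing $w=v+\delta$ and expanding gives
$$s^2=p-2\sqrt p\,v^TXu+u^TX^TXu+v^TXX^Tv+O_\prec(p^{-1/2}).$$
Here $u^TX^TXu=\|Xu\|^2$ and $v^TXX^Tv=\|X^Tv\|^2$. The extra $+\,\cdot$ corrections come from the second-order terms $\|Xu\|^2+\|X^Tv\|^2-(\text{projections})$.

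\textbf{Evaluating each term.}

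\emph{Fluctuation term.} $-2\sqrt p\,v^TXu=-\frac{2}{n}\sum_{i,j}\psi(c_{ij})$ is a sum of $p$ independent row contributions $\frac{2}{n}\sum_j\psi(c_{ij})$. Each has mean $0$ and variance $4/n$, so the total variance is $4p/n=4y$. The Lindeberg CLT across the independent rows gives $N(0,4y)$; the fourth moment of each row sum is $O(n^{-2})$ by 4-wise independence.

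\emph{First deterministic term.} $\|Xu\|^2=\sum_i(\frac1n\sum_j\psi_{ij})^2$ has mean $p/n=y$ and variance $O(p/n^2)$, so it converges to $y$.

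\emph{Second deterministic term.} $\|X^Tv\|^2=\frac{1}{np}\sum_j(\sum_i\psi_{ij})^2$ has mean $1$ by independence across rows, with vanishing variance, so it converges to $1$.

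Together these give $\tilde\lambda_1\approx p+1+y+N(0,4y)$, as stated. The Gaussian part is independent of $n$-scale corrections.

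\textbf{Controlling the remainders (main obstacle).} The hard step is to justify that the expansion remainder is $o(1)$ in probability. This requires $\|X\|=O_\prec(1)$. For that I use Theorem~\ref{mainthm0} together with a bound on the largest eigenvalue of $XX^T$. The ESD statement alone does not bound the top eigenvalue, so I would use a moment/trace bound of the form
$$\E\,\mathrm{tr}(XX^T)^k$$
for $k$ small, or apply Weyl's inequality with a crude operator-norm bound $\|X\|\le C$ with high probability, obtained from 4-wise independence via a fourth-moment (Latała-type) argument.

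The cleanest rigorous route is to write $\tilde\lambda_1$ as the largest root of the secular equation
$$\det\bigl(z-(\sqrt p vu^T-X)(\sqrt p vu^T-X)^T\bigr)=0,$$
and reduce it to a $2\times2$ determinant involving the resolvent quadratic forms $v^T G(z) v$, $u^T\tilde G(z)u$ and $v^TG(z)Xu$, evaluated at $z\approx p$. Since $\|XX^T\|\le C\ll p$, these resolvents expand as Neumann series in $1/z$: $G(z)=z^{-1}+z^{-2}XX^T+\cdots$. Only the first two terms matter at precision $o(1)$. This reduces everything to the three quadratic forms above, whose laws were computed in the previous step, and Slutsky's lemma then finishes the proof.
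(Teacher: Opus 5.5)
Your route is correct and genuinely different from the paper's. The paper does not analyse a secular equation or a perturbative expansion. Instead it works with the power-iteration quotients $\tilde{\lambda}_1^{(1)}=\frac{1}{p}\mathbf{1}^T\widetilde{\mathcal{G}}\mathbf{1}$ and $\tilde{\lambda}_1^{(2)}=\|\widetilde{\mathcal{G}}\mathbf{1}\|^2/\mathbf{1}^T\widetilde{\mathcal{G}}\mathbf{1}$, in three steps:
\begin{itemize}
\item it shows $\tilde{\lambda}_1^{(2)}-\tilde{\lambda}_1=o_P(1)$ by a von Mises argument using $\tilde{\lambda}_2\prec 1$ (Lemma~\ref{lem3});
\item it shows $\tilde{\lambda}_1^{(2)}-\tilde{\lambda}_1^{(1)}\to y$ from a variance bound on $\sum_i(\widetilde{\mathcal{G}}_i-(p+1))^2$ (Lemmas~\ref{lem2} and~\ref{lem4});
\item it proves $\tilde{\lambda}_1^{(1)}-(p+1)\to\mathcal{N}(0,4y)$ by the method of moments with a graph-colouring count (Section~\ref{pWg}).
\end{itemize}
In your notation $\tilde{\lambda}_1^{(1)}=\|A^Tv\|^2=p-2\sqrt{p}\,v^TXu+v^TXX^Tv$ exactly. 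So your observation that $-2\sqrt{p}\,v^TXu=-\frac{2}{n}\sum_{i,j}\psi(c_{ij})$ is a sum of i.i.d.\ bounded row terms (fourth moments summing to $O(1/n)$ when $d^\perp\ge 5$), together with the fact that $v^TXX^Tv$ has variance $O(1/n)$, proves Lemma~\ref{lem} in a few lines. That replaces the whole combinatorial analysis of $W_\gamma$. Your $\|Xu\|^2\to y$ is the counterpart of Lemmas~\ref{lem2} and~\ref{lem4}. Your secular equation, done most cleanly as a Schur complement of $\widetilde{\mathcal{G}}$ along $v$ (note $P^\perp\widetilde{\mathcal{G}}P^\perp=P^\perp XX^TP^\perp$ for $P^\perp=I-vv^T$), replaces the von Mises step. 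Both routes need the same external input: an upper bound on $\lambda_1(XX^T)$ that is small compared with $p$.

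That input is where your proposal needs tightening.
\begin{itemize}
\item Drop the Latala-type option. Latala's bound requires independent entries, which you do not have within a row. Fourth moments alone give only $\E\,\mathrm{tr}(XX^T)^2=p+p(p-1)/n$, i.e.\ $\|XX^T\|=O_P(\sqrt{p})$, not $O(1)$. The paper itself stresses that $\lambda_1=O(1)$ is known only when $d^\perp\to\infty$.
\item A single small $k$ in the trace method gives only $\lambda_1(XX^T)=O_P(p^{1/k})$. That is not the $\|X\|\prec 1$ your $O_\prec(p^{-1/2})$ remainder presupposes. For that you need $\E\,\mathrm{tr}(XX^T)^k=O_k(p)$ for every fixed $k$, which is the moment result of \cite{OQBT} under $d^\perp\ge 5$ and gives exactly Lemma~\ref{lambda}.
\item Alternatively, since only $o_P(1)$ errors matter for convergence in distribution, $k=2$ already suffices. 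With $\|XX^T\|=O_P(\sqrt{p})$ and the direct bound $\E\|XX^Tv\|^2\le 1+y$, the resolvent correction in the Schur complement is $O_P(p^{-1/2})$.
\end{itemize}
Done the last way, your argument uses only $d^\perp\ge 3$ and $B_4(\mathcal{C})=o(n^3)$ (weight-$4$ words of $\mathcal{C}^\perp$), which is weaker than the paper's hypothesis.
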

\subsection{Discussion of Main Theorems}
On one hand, the results about spectral measure in Theorems \ref{mainthm0} and \ref{mainthm} are similar. This means the non-centrality of the binary sequences does not affect the global spectral behaviour.

On the other hand, the behaviour of the largest eigenvalue of the two matrix models differs significantly. By simulation observation, the largest eigenvalue of $\mathcal{G}_\mathcal{C}$ is well of the order 1, and usually lies close to the upper bound of the support of the MP law (see the figures shown in \cite{CTX} for instance). Note that this has only been proved mathematically to be the case for truly random matrices, or more generally, when $d^\bot \to \infty$, whence the large deviation lemmas \cite[Lemma 3.4]{Pillai} hold. Moreover, the fluctuation is Tracy-Widom \cite{Bloemendal,Pillai}. However, Theorem \ref{mainthm2} states that the largest eigenvalue of the (non-central) $\widetilde{\mathcal{G}}_\mathcal{C}$ is much larger (in fact has the order of the dimension of the matrix), and also yields an asymptotically Gaussian fluctuation. The latter is known to be the case for several spiked sample covariance matrix models with independent entries, provided that the largest eigenvalue of the spike exceeds a certain threshold (first studied by \cite{Johnstone}, followed by \cite{BBP,BS,Paul,BY,LV,Cheng}). It is even more surprising that we can prove that this holds under the much weaker condition $d^\bot \geq 5$. This suggests that the behaviour of the (outlier) largest eigenvalue is dominated by the non-centrality rather than the code structure.



Another interesting thing here is that the code conditions in Theorems \ref{mainthm0}--\ref{mainthm2} are the same, namely $d^\bot \geq 5$. This condition also appears in \cite{CKX,CX2} for the group randomness of the $\pm 1$ binary sequences in another normalization for the Wigner semicircle law. We remark that this lower bound is optimal in the sense that the binary first-order Reed-Muller codes with $d^\bot=4$ do not possess any of these group randomness properties.

Nevertheless, this leaves an open problem to investigate whether $d^\bot \geq 5$ is just a coincidence or somewhat of a ``universal'' condition for eigenvalue behaviours of random matrices over binary linear codes (although it is already known to be false for the central limit theorem of linear spectral statistics of monomials on $\mathcal{G}_\mathcal{C}$, which requires $d^\bot \geq 7$ instead \cite{CLT}), by studying the spectral behaviour of code-based random matrices with other symmetries (see \cite{Bose, Bose2} for the i.i.d. analogues).

The paper is organized as follows. We first introduce the notations and some elementary but useful results in Section \ref{Pre}. Then in Section \ref{Grelate} we show the relations between $\widetilde{\mathcal{G}}_\mathcal{C}$ and $\mathcal{G}_\mathcal{C}$, and we prove Theorem \ref{mainthm} from this by Stieltjes transform in Section \ref{pthm1}. The remaining sections are devoted to the proof of Theorem \ref{mainthm2}. We mimic the main ideas of \cite{Cheng} to our setting, which involves estimation of $\tilde{\lambda}_1$ through two random variables $\tilde{\lambda}_1^{(1)}$ and $\tilde{\lambda}_1^{(2)}$, by showing that $\tilde{\lambda}_1^{(1)}$ converges in distribution to Gaussian and proving two concentration inequalities relating $\tilde{\lambda}_1^{(1)},\tilde{\lambda}_1^{(2)}$ and $\tilde{\lambda}_1$. The proofs of these require technical computation. To streamline the ideas of the proof, we assume them to prove Theorem \ref{mainthm2} directly in Section \ref{pthm2}. The proofs of the two concentration inequalities mostly follow the strategies from \cite{Cheng}, namely applying the von Mises iteration method (first introduced by \cite{Furedi}) with slight modification to adapt to our setting, which are provided in Sections \ref{plem3} and \ref{plem2} respectively. However the proof of the asymptotic normality of $\tilde{\lambda}_1^{(1)}$ is quite different from that in \cite{Cheng}. We use the method of moments, which involve technical combinatorial arguments. Its whole detailed proof is given in Section \ref{plem}. Finally in Section \ref{ns} we provide our numerical simulation data based on full binary codes, Gold codes, simplex codes and first-order Reed-Muller codes, which also demonstrate the optimality of the lower bound 5 for the dual distance in our main theorems.

\section{Preliminaries}\label{Pre}
\subsection{Notations}
Throughout this paper, we adopt the following notations:
\begin{itemize}
    \item $\mathcal{C}$ is an $[n,k]$ binary linear code;
    \item $d^\bot$ is the dual distance of $\mathcal{C}$ (or equivalently, the minimum distance of $\mathcal{C}^\bot$);
    \item For positive integers $p$ and $q$, $J_{p \times q}$ denotes the all-one matrix of dimension $p \times q$;
    \item For a finite set (or multi-set) $S$, $\sharp S$ denotes the number of elements in $S$;
    \item $\mathrm{Tr}(A)$ and $\mathrm{rk}(A)$ denote the trace and rank of the matrix $A$ respectively;
    \item Let $\alpha \in \mathbb{F}_{2^m}$, the finite field with $2^m$ elements. Then $\mathrm{Tr}_1^m(\alpha)$ is the absolute trace of $\alpha$, that is, $\mathrm{Tr}_1^m(\alpha)=\sum_{i=0}^{m-1}\alpha^{2^i}$;
    \item For non-negative integers $a \leq b$, $[a \isep b]$ denotes the set of integers between $a$ and $b$ (both inclusive);
    \item $\Omega_p$ and $\widetilde{\Omega}_p$ are the uniform probability spaces on the set of all mappings $\{s: [1\isep p] \to \psi(\mathcal{C})\}$ and $\{\tilde{s}: [1\isep p] \to \phi(\mathcal{C})\}$ respectively, where $\psi: \mathbb{F}_2 \to \mathbb{R}$ and $\phi: \mathbb{F}_2 \to \mathbb{R}$ are defined by $\psi(a)=(-1)^a$ and $\phi(a)=a$ respectively for $a \in \{0,1\}$, extend component-wise to $\mathcal{C}$;
    
    \item For $s \in \Omega_p$ and $\tilde{s} \in \widetilde{\Omega}_p$, $\Phi_s$ and $\widetilde{\Phi}_{\tilde{s}}$ are $p \times n$ random matrices whose $i$-th rows are $s(i)$ and $\tilde{s}(i)$ respectively, and $\mathcal{G}_s$ and $\widetilde{\mathcal{G}}_{\tilde{s}}$ are the Gram matrices of $\frac{1}{\sqrt{n}}\Phi_s$ and $\frac{2}{\sqrt{n}}\widetilde{\Phi}_{\tilde{s}}$ respectively;
    \item $\lambda_1 \geq \lambda_2 \geq \cdots \geq \lambda_p$ and $\tilde{\lambda}_1 \geq \tilde{\lambda}_2 \geq \cdots \geq \tilde{\lambda}_p$ are the eigenvalues of $\mathcal{G}_s$ and $\widetilde{\mathcal{G}}_{\tilde{s}}$ respectively;
    \item $\mu_{\mathcal{G}_s}=\frac{1}{p}\sum_{i=1}^p \delta_{\lambda_i}$ and $\mu_{\tilde{\mathcal{G}}_{\tilde{s}}}=\frac{1}{p}\sum_{i=1}^p \delta_{\tilde{\lambda}_i}$ are the spectral measures of $\mathcal{G}_s$ and $\widetilde{\mathcal{G}}_{\tilde{s}}$ respectively;
       \item For $w \in [0\isep n]$, $A_w(\mathcal{C})$ and $B_w(\mathcal{C})=A_w(\mathcal{C}^\bot)$ denote the number of codewords of weight $w$ in $\mathcal{C}$ and $\mathcal{C}^\bot$ respectively;
    \item For any probability space $\Omega$, $\E(\cdot,\Omega), \mathbb{V}(\cdot,\Omega)$ and $\P(\cdot,\Omega)$ denote the expectation, variance and probability with respect to $\Omega$ respectively;
        \item We write $f(n)=O(g(n))$ to mean that there exists a positive constant $C$ independent of $n$ such that $\left|\frac{f(n)}{g(n)}\right| \leq C$ for all $n$; we write $f(n)=O_\l(g(n))$ to emphasize that the constant $C$ above depends on $\l$;
    \item We write $f(n)=o(g(n))$ to mean $\lim_{n \to \infty} \frac{f(n)}{g(n)}=0$. We write $f(n)=o_\l(g(n))$ to emphasize that this zero limit is under the assumption that $\l$ is independent of $n$;
      \item In defining the Green function (or resolvent), the complex parameter $z$ is written as $E+\mathrm{i}\eta$ for $E,\eta \in \mathbb{R}$ and $\eta > 0$;
     \item Given sequences of random variables $\{X_n\}$ and $
    \{Y_n\}$, where $X_n$ and $Y_n$ are both over the same probability space $\Omega(n)$, we say $X_n$ is stochastically dominated by $Y_n$, written symbolically as $X_n \prec Y_n$, if, for any $\ep > 0$ and $D > 0$, there exists $N=N(\ep,D)$, such that for all $n > N$,
    $$\P(|X_n| > n^\ep|Y_n|,\Omega(n)) \leq n^{-D}.$$
    Moreover, for a deterministic number $c$, we write $X_n=c+O_\prec(Y_n)$ to mean $X_n-c \prec Y_n$;
     \item Let $\{Y_n\}$ be a sequence of random variables and $\mathcal{F}$ be a (fixed) probability distribution. We write $Y_n \to^d \mathcal{F}$ to mean that $Y_n$ converges in distribution to $\mathcal{F}$ as $n \to \infty$.

\end{itemize}
\subsection{Useful Results}
\begin{lemma}\label{W}
    Let $\mathcal{C}$ be a binary linear code with length $n$ and minimum distance $d$. As $n \to \infty$ while $d$ is fixed, for any fixed positive integer $w \geq d$, we have
    $$A_w(\mathcal{C})=O_w(n^{w-r}),$$
    where $r:=\lfloor \frac{d-1}{2}\rfloor$.
\end{lemma}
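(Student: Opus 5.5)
The plan is a packing (sphere-covering) argument on the supports of codewords. Fix $w \geq d$ and let $S \subseteq \mathbb{F}_2^n$ be the set of codewords of weight $w$, so $A_w(\mathcal{C})=\sharp S$. Identify each codeword with its support, a $w$-subset of $[1\isep n]$. Any two distinct codewords $c,c' \in S$ satisfy $d(c,c')=\mathrm{wt}(c+c')\geq d$. Since both have weight $w$, we get $d(c,c')=2(w-|\mathrm{supp}(c)\cap\mathrm{supp}(c')|)$. Hence
$$|\mathrm{supp}(c)\cap\mathrm{supp}(c')| \leq w-\lceil d/2\rceil .$$
The key observation is that $w-\lceil d/2\rceil < w-r$, since $r=\lfloor (d-1)/2\rfloor$ and $\lceil d/2\rceil = r+1$. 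Equivalently, no $(w-r)$-subset of $[1\isep n]$ is contained in the supports of two distinct codewords of $S$.

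Next comes the double count. Let $t=w-r$. We may assume $t\geq 1$, which holds because $w \geq d > r$. Each codeword in $S$ contains exactly $\binom{w}{t}$ subsets of size $t$ in its support. By the observation above, these families are pairwise disjoint as $c$ ranges over $S$. Therefore
$$\sharp S \cdot \binom{w}{t} \leq \binom{n}{t}, \qquad\text{so}\qquad A_w(\mathcal{C}) \leq \frac{\binom{n}{w-r}}{\binom{w}{w-r}} \leq \frac{n^{w-r}}{(w-r)!\binom{w}{r}} .$$
The constant depends only on $w$ (and $d$, which is fixed), which gives $A_w(\mathcal{C})=O_w(n^{w-r})$.

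There is no real obstacle. The only point needing care is the parity bookkeeping: we need $w-\lceil d/2\rceil \leq t-1$, which is exactly the identity $\lceil d/2\rceil=\lfloor (d-1)/2\rfloor+1$, valid for both parities of $d$. A quick check on the extreme case $d=1$ gives $r=0$ and the trivial bound $\binom{n}{w}$, consistent with the claim. Note also that linearity of $\mathcal{C}$ is not needed; only the minimum distance is used.
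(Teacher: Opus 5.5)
Your proof is correct. It arrives at exactly the inequality the paper uses, $A_w(\mathcal{C})\le \binom{n}{w-r}/\binom{w}{r}$, but by a different route.

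The paper does not argue directly. It quotes Levenshtein's general bound $A_w(\mathcal{C})\le \min_{|v|\le r}\binom{n}{w+v}/N_n(d,w,v)$ and specializes to $v=-r$. There the only surviving term of $N_n(d,w,-r)$ is the one with $i=r$, namely $\binom{w}{r}$. That quantity counts the weight-$(w-r)$ words at distance $r$ from a fixed weight-$w$ word, i.e.\ the $(w-r)$-subsets of its support. So the $v=-r$ instance of Levenshtein's bound is precisely your packing statement. Your intersection estimate $|\mathrm{supp}(c)\cap\mathrm{supp}(c')|\le w-r-1$ is what makes these families disjoint across distinct codewords.

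What your version buys:
\begin{itemize}
\item It is self-contained and transparent.
\item It makes explicit that linearity plays no role.
\item It shows that only a \emph{lower bound} on the minimum distance is used. For instance, $d\ge 5$ alone already gives $A_w\le\binom{n}{w-2}/\binom{w}{2}$, with a constant depending only on $w$. This is the form in which the lemma is later applied to $\mathcal{C}^\bot$, where $d^\bot\ge 5$ is assumed but not necessarily fixed.
\end{itemize}

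What the paper's citation buys is brevity and a whole family of bounds indexed by $v$. For the order of growth in $n$, however, $v=-r$ is already the optimal choice: the $v$-th bound has order $n^{w+\lceil (v-r)/2\rceil}$. So nothing is lost by working only with your specialization.
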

\begin{proof}
    By \cite[Eq. (5)]{Levy}, we have
    $$A_w(\mathcal{C}) \leq \min_v \frac{\binom{n}{w+v}}{N_n(d,w,v)},$$
    where the minimum is taken over integers $v$ with $|v| \leq r$,
    and
    $$N_n(d,w,v)=\sum_i \binom{w}{i}\binom{n-w}{v+i},$$
    with summation over all integers $i$ satisfying $2i+v \leq r$.

    Note that the summand of $N_n(d,w,v)$ is non-negative for all $v$ with $|v| \leq r$ and $i$ satisfying $2i+v \leq r$, we may simply take $v=-r$ and $i=r$ to get that
    $$A_w(\mathcal{C}) \leq \frac{\binom{n}{w-r}}{N_n(d,w,-r)} \leq \frac{\binom{n}{w-r}}{\binom{w}{r}} = O_w(n^{w-r})$$
    as desired.

\end{proof}
\begin{lemma}[Eigenvalue Interlacing]\label{EI}
    Let $A,B$ be $p \times n$ real matrices with $\mathrm{rk}(B)=1$ and $p \leq n$. Let $G=AA^T$ and $G'=(A+B)(A+B)^T$ be the Gram matrices of $A$ and $A+B$ respectively. If $\lambda_1 \geq \lambda_2 \geq \cdots \geq \lambda_p$ and $\lambda_1' \geq \lambda_2' \geq \cdots \geq \lambda_p'$ are the eigenvalues of $G$ and $G'$ respectively, then for $1 \leq i \leq p$,
    $$\lambda_{i+1} \leq \lambda_i' \leq \lambda_{i-1}$$
    where by convention $\lambda_{p+1}=0$ and $\lambda_0=+\infty$.
\end{lemma}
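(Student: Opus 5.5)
We reduce the statement to the classical Cauchy interlacing theorem for Hermitian matrices under a rank-one perturbation. Since $p \leq n$, the nonzero eigenvalues of $G=AA^T$ coincide with those of the $n\times n$ matrix $A^TA$, and similarly for $G'$. It is therefore convenient to work with $n\times n$ or $(p+n)$-dimensional objects where the perturbation becomes a genuine low-rank Hermitian perturbation. The cleanest route avoids this, though, and uses the bordering (Cauchy) interlacing for principal submatrices.

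\textbf{Key construction.} Write $B=uv^T$ with $u\in\mathbb{R}^p$, $v\in\mathbb{R}^n$, and assume without loss of generality $\|v\|=1$. Complete $v$ to an orthonormal basis, i.e.\ choose an orthogonal $n\times n$ matrix $Q$ whose first column is $v$. Then $G=(AQ)(AQ)^T$ and $G'=((A+B)Q)((A+B)Q)^T$. Moreover $(A+B)Q$ and $AQ$ differ only in their first column, since $BQ=u e_1^T$. Let $C$ be the $p\times(n-1)$ matrix consisting of the last $n-1$ columns of $AQ$; these are also the last $n-1$ columns of $(A+B)Q$. Then
\[
G = CC^T + aa^T, \qquad G' = CC^T + a'a'^T,
\]
where $a$ and $a'$ are the respective first columns. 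So both $G$ and $G'$ are rank-one positive semidefinite perturbations of the common matrix $H=CC^T$.

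\textbf{Interlacing via $H$.} By Weyl's inequalities, or by the standard interlacing for positive rank-one updates (via Courant--Fischer), if $\mu_1\geq\cdots\geq\mu_p$ are the eigenvalues of $H$, then
\[
\mu_i \le \lambda_i \le \mu_{i-1} \quad\text{and}\quad \mu_i \le \lambda'_i \le \mu_{i-1}.
\]
Chaining these gives
\[
\lambda'_i \le \mu_{i-1} \le \lambda_{i-1} \quad\text{and}\quad \lambda'_i \ge \mu_i \ge \lambda_{i+1},
\]
which is exactly the claim. The convention $\lambda_0=+\infty$ handles $i=1$. The convention $\lambda_{p+1}=0$ is valid because $\lambda'_p\ge 0$ by positive semidefiniteness.

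\textbf{Main obstacle.} The only step needing care is the Courant--Fischer argument for a positive rank-one update $H+ww^T$. For the lower bound $\lambda_i(H+ww^T)\ge\lambda_i(H)$, monotonicity of eigenvalues under adding a PSD matrix suffices. For the upper bound $\lambda_i(H+ww^T)\le\lambda_{i-1}(H)$, restrict the min-max to subspaces orthogonal to $w$. This loses one dimension, and on such subspaces the quadratic forms of $H$ and $H+ww^T$ agree. Both facts are routine, so the proof is essentially the observation that $G$ and $G'$ share the common "parent" $CC^T$.
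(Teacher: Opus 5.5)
Your proof is correct, but it takes a different route from the paper.

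The paper pads $A$ and $A+B$ with $n-p$ zero rows, producing $n\times n$ matrices with the same nonzero singular values. It then quotes Thompson's theorem on interlacing of singular values under a rank-one perturbation, and squares to get the eigenvalue statement. That is where $p\le n$ and the convention $\lambda_{p+1}=0$ (that is, $\sigma_{p+1}(A_0)=0$) enter.

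You instead rotate the column space by an orthogonal $Q$ whose first column is $v$, so that $A$ and $A+B$ differ in a single column. This exhibits $G=H+aa^T$ and $G'=H+a'a'^T$ as two positive semidefinite rank-one updates of the common matrix $H=CC^T$. Chaining the Weyl interlacing for each update gives $\lambda_{i+1}\le\mu_i\le\lambda'_i\le\mu_{i-1}\le\lambda_{i-1}$.

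What your route buys:
\begin{itemize}
\item It is self-contained, needing only Courant--Fischer, and your sketch restricting the min-max to $w^\perp$ is the right argument.
\item It never uses $p\le n$.
\item The convention $\lambda_{p+1}=0$ follows automatically from $\lambda'_p\ge\mu_p\ge 0$.
\item It extends verbatim to $\mathrm{rk}(B)=k$: take the first $k$ columns of $Q$ to span the row space of $B$, and you get $\lambda_{i+k}\le\lambda'_i\le\lambda_{i-k}$.
\end{itemize}
The paper's citation is shorter, but it outsources exactly the content you prove.

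One small expository point: your first paragraph announces Cauchy interlacing for principal submatrices, but what you actually use is interlacing for a rank-one positive semidefinite update. That sentence, and the aside about $A^TA$, can be dropped.
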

\begin{proof}
    We remark that the eigenvalues of $G$ and $G'$ are simply the squares of the singular values of $A$ and $A'=A+B$ respectively. Moreover, the nonzero singular values (counted with multiplicity) of $A$ and $A'$ are also the same as those of the $n \times n$ matrices
    $$A_0=\begin{bmatrix}
        A\\
        O_{(n-p) \times n}
    \end{bmatrix}$$
    and
    $$A_0'=\begin{bmatrix}
        A'\\
        O_{(n-p) \times n}
    \end{bmatrix}$$
    respectively, where $O_{(n-p) \times n}$ is the $(n-p) \times n$ zero matrix. Then the desired result follows from \cite[Theorem 1]{Thompson}.
\end{proof}
\begin{lemma}[Resolvent Identity]\cite[Eq. (2.3)]{Knowles}\label{RI}
    Let $M$ and $\Delta$ be square matrices of the same dimension. Let $G=(M-z)^{-1}$ and $G'=(M+\Delta-z)^{-1}$ denote the Green functions (or resolvents) of $M$ and $M+\Delta$ respectively. Then
    $$G'=G-G\Delta G'.$$
    \end{lemma}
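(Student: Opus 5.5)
The identity is purely algebraic and needs nothing beyond invertibility of $M-z$ and $M+\Delta-z$. For $\eta>0$ and real symmetric $M$, $M+\Delta$, both matrices are invertible because their spectra are real and $z\notin\mathbb{R}$. In general one simply assumes $z$ avoids both spectra, which the statement implicitly does by writing $G$ and $G'$.

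The argument is the standard ``$A^{-1}-B^{-1}=A^{-1}(B-A)B^{-1}$'' manipulation. Write $A=M-z$ and $B=M+\Delta-z$, so that $B-A=\Delta$, $G=A^{-1}$ and $G'=B^{-1}$. Multiply $A^{-1}$ on the right by $I=BB^{-1}$ and $B^{-1}$ on the left by $I=A^{-1}A$:
$$G-G'=A^{-1}BB^{-1}-A^{-1}AB^{-1}=A^{-1}(B-A)B^{-1}=G\Delta G'.$$
Rearranging gives $G'=G-G\Delta G'$, as claimed.

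The key steps, in order, are: first justify invertibility, then insert the identity factors, then factor out $A^{-1}$ on the left and $B^{-1}$ on the right. There is no real obstacle. The only points needing care are the ordering of the non-commuting factors, which must be $G\Delta G'$ and not $G'\Delta G$ (by symmetry the latter version also holds), and the invertibility hypothesis.
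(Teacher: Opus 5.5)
Your proposal is correct. The paper gives no proof of its own and simply cites \cite[Eq. (2.3)]{Knowles}; your argument, inserting $BB^{-1}$ and $A^{-1}A$ to get $A^{-1}-B^{-1}=A^{-1}(B-A)B^{-1}$, is the standard derivation behind that citation. Your remark on invertibility covers the only implicit hypothesis, and that hypothesis is automatically satisfied in the paper's use in Lemma \ref{S}, where $\mathcal{G}_s$ and $\widetilde{\mathcal{G}}_{\tilde{s}}$ are real symmetric and $\eta>0$.
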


\section{Relation between $\widetilde{\mathcal{G}}_{\tilde{s}}$ and $\mathcal{G}_s$}\label{Grelate}
Before proving our main theorems, we first show that, if $s \in \Omega_p$ and $\tilde{s} \in \widetilde{\Omega}_p$ are derived from exactly the same instant of $p$ (not necessarily distinct) codewords from $\mathcal{C}$, then the two code matrix models $\widetilde{\mathcal{G}}_{\tilde{s}}$ and $\mathcal{G}_s$ are related.

First, it can be easily verified that $$\widetilde{\Phi}_{\tilde{s}}=\frac{1}{2}(J_{p \times n}-\Phi_s).$$
Hence
\begin{equation}\label{E1}
\frac{2}{\sqrt{n}}\widetilde{\Phi}_{\tilde{s}}=\frac{1}{\sqrt{n}}J_{p \times n}-\frac{1}{\sqrt{n}}\Phi_s.
\end{equation}
Since $\mathrm{rk}\left(\frac{1}{\sqrt{n}}J_{p \times n}\right)=1$, using Lemma \ref{EI}, we have the following result immediately.
\begin{lemma}\label{EI2}
    Assume $s \in \Omega_p$ and $\tilde{s} \in \widetilde{\Omega}_p$ are derived from exactly the same instant of $p$ (not necessarily distinct) codewords from $\mathcal{C}$. Then the eigenvalues of $\widetilde{\mathcal{G}}_{\tilde{s}}$ and $\mathcal{G}_s$ satisfy
    $$\lambda_{i+1} \leq \tilde{\lambda}_i \leq \lambda_{i-1}$$
    for $1 \leq i \leq p$, where for convention, $\lambda_0=+\infty$ and $\lambda_{p+1}=0$.
\end{lemma}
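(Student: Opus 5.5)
The plan is to apply Lemma \ref{EI} directly with $A=-\frac{1}{\sqrt{n}}\Phi_s$ and $B=\frac{1}{\sqrt{n}}J_{p\times n}$. By (\ref{E1}), $A+B=\frac{2}{\sqrt{n}}\widetilde{\Phi}_{\tilde{s}}$ entrywise. This identity holds row by row, because each row of $\widetilde{\Phi}_{\tilde{s}}$ and the corresponding row of $\Phi_s$ come from the same codeword, and for every $a\in\{0,1\}$ we have $\phi(a)=\frac{1}{2}(1-\psi(a))$. The hypothesis that $s$ and $\tilde{s}$ arise from the same instant of $p$ codewords is exactly what makes this row-by-row matching valid.

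Next we check the hypotheses of Lemma \ref{EI}. Since $p<n$, we have $p\le n$. The matrix $B$ is a nonzero multiple of the all-one matrix, so $\mathrm{rk}(B)=1$.

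We then identify the two Gram matrices. For $A$ we get $AA^T=\frac{1}{n}\Phi_s\Phi_s^T=\mathcal{G}_s$, since the sign cancels. For $A+B$ we get $(A+B)(A+B)^T=\frac{4}{n}\widetilde{\Phi}_{\tilde{s}}\widetilde{\Phi}_{\tilde{s}}^T=\widetilde{\mathcal{G}}_{\tilde{s}}$. Thus the eigenvalues $\lambda_i$ of $G$ in Lemma \ref{EI} are those of $\mathcal{G}_s$, and the eigenvalues $\lambda_i'$ of $G'$ are the $\tilde{\lambda}_i$.

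Lemma \ref{EI} then gives $\lambda_{i+1}\le\tilde{\lambda}_i\le\lambda_{i-1}$ for $1\le i\le p$, with the same conventions $\lambda_0=+\infty$ and $\lambda_{p+1}=0$.

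No step here is a real obstacle. The only point that needs care is the sign: $A$ must be taken as $-\frac{1}{\sqrt{n}}\Phi_s$, not $+\frac{1}{\sqrt{n}}\Phi_s$. This choice does not affect the Gram matrix $AA^T$, and it is what makes the perturbation $A+B$ exactly equal to $\frac{2}{\sqrt{n}}\widetilde{\Phi}_{\tilde{s}}$.
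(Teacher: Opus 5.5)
Your proposal is correct and is the same argument the paper uses: it reads off the decomposition $\frac{2}{\sqrt{n}}\widetilde{\Phi}_{\tilde{s}}=-\frac{1}{\sqrt{n}}\Phi_s+\frac{1}{\sqrt{n}}J_{p\times n}$ from (\ref{E1}), notes that the second term has rank one, and applies Lemma \ref{EI} immediately. Your explicit checks of the hypotheses and of the sign (harmless, since $AA^T$ is unchanged) fill in details the paper leaves implicit.
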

This eigenvalue interlacing property essentially implies that the ESDs of both models should be very close to each other. This can be explained by Stieltjes transform as follows.
\begin{lemma}\label{S}
    Assume $s \in \Omega_p$ and $\tilde{s} \in \widetilde{\Omega}_p$ are derived from exactly the same instant of $p$ (not necessarily distinct) codewords from $\mathcal{C}$. Let $m_{\mathcal{G}_s}(z)$ and $m_{\widetilde{\mathcal{G}}_{\tilde{s}}}(z)$ be the Stieltjes transforms of $\mu_{\mathcal{G}_s}$ and $\mu_{\widetilde{\mathcal{G}}_{\tilde{s}}}$ respectively. Then
    $$|m_{\widetilde{\mathcal{G}}_{\tilde{s}}}(z)-m_{\mathcal{G}_s}(z)| \leq \frac{4}{p\eta}.$$
\end{lemma}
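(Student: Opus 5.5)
We will deduce the bound from the interlacing in Lemma \ref{EI2}, via the standard fact that two spectra which interlace have Stieltjes transforms differing by $O(1/(p\eta))$. Write
$$m_{\widetilde{\mathcal{G}}_{\tilde{s}}}(z)-m_{\mathcal{G}_s}(z)=\frac{1}{p}\int_{\mathbb{R}}\frac{1}{x-z}\,d(\widetilde{F}-F)(x),$$
where $F$ and $\widetilde{F}$ are the empirical distribution functions of $\mu_{\mathcal{G}_s}$ and $\mu_{\widetilde{\mathcal{G}}_{\tilde{s}}}$, each multiplied by $p$ (they are counting functions). Integrating by parts turns this into
$$-\frac{1}{p}\int_{\mathbb{R}}(\widetilde{F}(x)-F(x))\,\frac{dx}{(x-z)^2}.$$
The boundary terms vanish, because both counting functions equal $0$ for $x<0$ and equal $p$ for large $x$, while $1/(x-z)\to0$.

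The key step is a uniform bound on the counting-function difference. We claim $\sup_x|\widetilde{F}(x)-F(x)|\le 1$, using Lemma \ref{EI2}. Suppose $\lambda_k>x$ for exactly $j$ indices $k$. Then $\lambda_{j+2}\le x$ (or $j+2>p$), so interlacing gives $\tilde{\lambda}_{j+1}\le\lambda_j$. The interlacing alone only pins the eigenvalues down up to a shift of one index in each direction; combined with the convention $\lambda_{p+1}=0$ and the nonnegativity of all eigenvalues, it gives $\#\{k:\tilde\lambda_k>x\}\in\{j-1,j,j+1\}$.

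For the integral we use $\int_{\mathbb{R}}\frac{dx}{|x-z|^2}=\frac{\pi}{\eta}$, which yields
$$|m_{\widetilde{\mathcal{G}}_{\tilde{s}}}(z)-m_{\mathcal{G}_s}(z)|\le\frac{\pi}{p\eta}\le\frac{4}{p\eta}.$$
This is already within the stated constant.

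There is a robust alternative route that gives exactly the constant $4$, should the counting argument need adjustment. The rank-one perturbation $\frac{1}{\sqrt{n}}J_{p\times n}$ changes the Gram matrix by a perturbation of rank at most $2$, since $(A+B)(A+B)^T-AA^T=AB^T+BA^T+BB^T$ has column space contained in $\mathrm{span}(A\mathbf{v},\mathbf{u})$ when $B=\mathbf{u}\mathbf{v}^T$. The standard resolvent rank inequality $|\mathrm{Tr}(G-G')|\le \mathrm{rk}(\Delta)\cdot\frac{2}{\eta}$, obtained from Lemma \ref{RI} by writing $G-G'=G\Delta G'$ and bounding the trace of a rank-$r$ matrix by $r$ times its operator norm, gives
$$|m_{\widetilde{\mathcal{G}}_{\tilde{s}}}(z)-m_{\mathcal{G}_s}(z)|\le\frac{1}{p}\cdot 2\cdot\frac{2}{\eta}=\frac{4}{p\eta}.$$
The operator-norm factor needs care, since $\|G\Delta G'\|\le\|\Delta\|/\eta^2$ is not directly useful. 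The fix is to use instead, for a rank-one update, the fact that $\mathrm{Tr}(G)-\mathrm{Tr}(G')$ equals a derivative of $\log$ of a ratio of determinants bounded by $1/\eta$ in modulus; equivalently, interlacing per rank-one step gives $\pi/\eta\le 2/\eta\cdot$const per step.

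The main obstacle is getting the counting bookkeeping in the interlacing step exactly right. I would first carry out the counting-function argument, which gives $\pi/(p\eta)$; I expect the paper's constant $4$ to come from bounding $2\sup|\widetilde F-F|\le 2$ times $\pi/\eta$ crudely, or from the two-rank-one-step argument above.
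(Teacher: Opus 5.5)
Your main argument is correct, but it takes a different route from the paper's. The paper does not use the ordering of eigenvalues at all. It writes $\widetilde{\mathcal{G}}_{\tilde{s}}=\mathcal{G}_s+\Delta$ with $\mathrm{rk}(\Delta)\le 2$, and observes from Lemma \ref{RI} that $\widetilde{G}-G=-G\Delta\widetilde{G}$ has rank at most $2$. It then bounds the operator norm by $\|G\|+\|\widetilde{G}\|\le 2/\eta$, which gives $|\mathrm{Tr}(\widetilde{G}-G)|\le 2\cdot 2/\eta$.

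This is exactly your fallback route, and the obstacle you raise there does not arise. Since $G\Delta G'=G-G'$, you bound $\|G-G'\|$ directly rather than through $\|\Delta\|$, so no determinant argument is needed. The constant $4$ is rank times $2/\eta$; it is not $2\pi$, which exceeds $4$.

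Your counting-function route instead uses the finer shift-by-one interlacing of Lemma \ref{EI2} to get $\sup_x|\widetilde{F}(x)-F(x)|\le 1$. After integrating by parts, this gives the sharper constant $\pi$. It also gives more than the lemma: $|\mu_{\widetilde{\mathcal{G}}_{\tilde{s}}}(\mathbf{I})-\mu_{\mathcal{G}_s}(\mathbf{I})|\le 2/p$ for every interval $\mathbf{I}$. So Theorem \ref{mainthm} would follow directly from Theorem \ref{mainthm0}, without Stieltjes transforms or the Helffer--Sj\"ostrand step. The paper's argument, on the other hand, needs only the rank of the perturbation and no interlacing.

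Two small repairs are needed.
\begin{enumerate}
\item Integration by parts gives $+\frac{1}{p}\int(\widetilde{F}-F)(x)\,(x-z)^{-2}\,dx$, not a minus sign; this is harmless for the modulus.
\item The counting step should read as follows. If exactly $j$ of the $\lambda_k$ exceed $x$, then $\tilde\lambda_i\ge\lambda_{i+1}\ge\lambda_j>x$ for $i\le j-1$, and $\tilde\lambda_i\le\lambda_{i-1}\le\lambda_{j+1}\le x$ for $i\ge j+2$. Hence $|\sharp\{k:\tilde\lambda_k>x\}-j|\le 1$. The inequality $\tilde\lambda_{j+1}\le\lambda_j$ you quote is not the relevant one, and nonnegativity is needed only for the boundary terms.
\end{enumerate}
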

\begin{proof}
    By (\ref{E1}), we may write
    \begin{align*}
\widetilde{\mathcal{G}}_{\tilde{s}}&=\frac{1}{n}(J_{p \times n}-\Phi_s)(J_{p \times n}-\Phi_s)^T\\
&=\frac{1}{n}(J_{p \times n}-\Phi_s)(J_{n \times p}-\Phi_s^T)\\
&=\mathcal{G}_s+\Delta,
\end{align*}
where
$$\Delta=\Delta_1+\Delta_2$$
with
$$\Delta_1=\frac{1}{n}(J_{p \times n}-\Phi_s)J_{n \times p}, \quad \Delta_2=-\frac{1}{n}J_{p \times n}\Phi_s^T.$$
Note that $\Delta_1$ and $\Delta_2$ are real $p \times p$ matrices with rank at most $\mathrm{rk}(J_{p \times n})=1$. This implies that $\Delta=\widetilde{\mathcal{G}}_{\tilde{s}}-\mathcal{G}_s$ is a real symmetric $p \times p$ matrix with rank at most 2.

Denote by $G, \widetilde{G}$ the Green functions (or resolvents) of the (real symmetric) matrices $\mathcal{G}_s, \widetilde{\mathcal{G}}_{\tilde{s}}$ respectively.

Lemma \ref{RI} implies that the difference $\widetilde{G}-G$ has rank at most 2, and its operator norm is at most $2\eta^{-1}$ trivially.

Hence
$$|m_{\widetilde{\mathcal{G}}_{\tilde{s}}}(z)-m_{\mathcal{G}_s}(z)|=\left|\frac{1}{p}\mathrm{Tr}(\widetilde{G})-\frac{1}{p}\mathrm{Tr}(G)\right|=\frac{1}{p}|\mathrm{Tr}(\widetilde{G}-G)|\leq \frac{4}{p\eta}.$$
\end{proof}
\section{Proof of Theorem \ref{mainthm}: Limiting and Convergence Rate of ESD of $\widetilde{\mathcal{G}}_{\tilde{s}}$}\label{pthm1}
We can now complete the proof of Theorem \ref{mainthm}.

Recall from \cite[Theorem 5]{CTX} that, given $\tau \in (0,\frac{1}{4})$ and $D > 0$,
\begin{equation}\label{S2}
|m_{\mathcal{G}_s}(z)-m_{\mathrm{MP}_y}(z)| \leq n^\tau\left(\frac{1}{n^{\frac{1}{4}}}+\frac{1}{n\eta^{\frac{7}{2}}}\right)
\end{equation}
with probability at least $1-n^{-D}$ for large $n$, for each fixed $z \in \mathbf{S}_\tau:=\{z \in \mathbb{C}: \min\{|E-(1-\sqrt{y})^2|,|E-(1+\sqrt{y})^2|\} \leq \tau^{-1}, n^{-\frac{1}{4}+\tau} \leq \eta \leq \tau^{-1}\}$
in \cite{CTX}, where
$$m_{\mathrm{MP}_y}(z)=\frac{1-y-z+\sqrt{(y+z-1)^2-4yz}}{2yz}$$
is the Stieltjes transform of the MP law. Now combining 
Lemma \ref{S} and (\ref{S2}), we have, for any $D > 0$,
\begin{align}
    |m_{\widetilde{\mathcal{G}}_{\tilde{s}}}(z)-m_{\mathrm{MP}_y}(z)| &\leq |m_{\widetilde{\mathcal{G}}_{\tilde{s}}}(z)-m_{\mathcal{G}_s}(z)|+|m_{\mathcal{G}_s}(z)-m_{\mathrm{MP}_y}(z)|\nonumber\\
    &\leq \frac{4}{p\eta}+n^\tau\left(\frac{1}{n^{\frac{1}{4}}}+\frac{1}{n\eta^{\frac{7}{2}}}\right)\nonumber\\
    &\leq Cn^\tau\left(\frac{1}{n^{\frac{1}{4}}}+\frac{1}{n\eta^{\frac{7}{2}}}\right) \label{S3}
\end{align}
with probability at least $1-n^{-D}$ for large $n$, for each fixed $z \in \mathbf{S}_\tau$.

Following the application of Helffer-Sj\"{o}strand formula in \cite[Appendix]{CTX}, we conclude that
$$\mu_{\widetilde{\mathcal{G}}_{\tilde{s}}}(\mathbf{I})=\varrho_{\mathrm{MP}_y}(\mathbf{I})+O_\prec(n^{-\frac{1}{4}})$$
uniformly for all intervals $\mathbf{I} \subset \mathbb{R}$.

This completes the proof of Theorem \ref{mainthm}.
\begin{remark}
Note that if $d^\bot \to \infty$, then one can verify that the entries of $\Phi_s$ satisfy the large deviation lemmas in \cite[Lemma 3.4]{Pillai} and \cite[Lemma 3.1]{Bloemendal}. Hence by Theorem 2.4 of the latter paper, we in fact have the stronger result
\begin{equation}\label{S4}
    |m_{\mathcal{G}_s}(z)-m_{\mathrm{MP}_y}(z)| \prec \frac{1}{n\eta}
\end{equation}
over $\mathbf{S}_\tau':=\{z \in \mathbb{C}: \min\{|E-(1-\sqrt{y})^2|,|E-(1+\sqrt{y})^2|\} \leq \tau^{-1}, \eta \in [n^{-1+\tau},\tau^{-1}], |z| \geq \tau\}$ (with $\tau \in (0, 1)$ fixed).

Then combining Lemma \ref{S} and (\ref{S4}), we obtain
$$|m_{\widetilde{\mathcal{G}}_{\tilde{s}}}(z)-m_{\mathrm{MP}_y}(z)| \prec \frac{1}{n\eta}$$
over $z \in \mathbf{S}_\tau'$
and hence
$$\mu_{\widetilde{\mathcal{G}}_{\tilde{s}}}(\mathbf{I})=\varrho_{\mathrm{MP}_y}(\mathbf{I})+O_\prec(n^{-1})$$
uniformly for all intervals $\mathbf{I} \subset \mathbb{R}$. 

Moreover, by \cite{Yin}, we have $\E(\lambda_1^m,\Omega_p) \leq (1+\sqrt{y})^{2m}(1+o(1))$ for some $m$ slightly larger than $\log n$. This implies $\E(\lambda_1^\l,\Omega_p) \leq (1+\sqrt{y})^{2\l}+o(1)$ for all fixed $\l$ by H\"older's inequality. Together with Lemma \ref{EI2}, one can then verify that the expected $\l$-th spectral moment $\E(A_\l(s),\Omega_p)=\frac{1}{p}\sum_{i=1}^p \E(\lambda_i^\l,\Omega_p)$ of $\mathcal{G}_s$ and the ``adjusted'' expected $\l$-th spectral moment (that is, disregarding the largest eigenvalue) $\E(A_\l'(\tilde{s}),\widetilde{\Omega}_p)=\frac{1}{p-1}\sum_{i=2}^p \E(\tilde{\lambda}_i^\l,\widetilde{\Omega}_p)$ of $\widetilde{\mathcal{G}}_{\tilde{s}}$ asymptotically converge to the same limit. By Theorem \ref{mainthm0}, then we see that $\E(A_\l'(\tilde{s}),\widetilde{\Omega}_p)$ also converges to the $\l$-th MP law moment. However, we are not able to verify whether this result also holds under the much weaker condition $d^\bot \geq 5$.
\end{remark}
\section{Proof of Theorem \ref{mainthm2}: The Limiting Behaviour of $\tilde{\lambda}_1$}\label{pthm2}
In this section we investigate the limiting behaviour of the largest eigenvalue $\tilde{\lambda}_1$ of $\widetilde{\mathcal{G}}_{\tilde{s}}$.

We remark that by (\ref{E1}), the random matrix $\widetilde{\mathcal{G}}_{\tilde{s}}$ is essentially the non-central sample covariance matrix in \cite{Cheng} with $\mu=\sigma=1$ but with the distribution of the individual entries of the input matrix being uniform Bernoulli instead of Gaussian. Moreover, the entries in a row are in general dependent according to what the code $\mathcal{C}$ is. Nevertheless, we will adapt their ideas of proof for our setting. The main idea behind is to introduce two random variables which are functions of the entries of $\widetilde{\mathcal{G}}_{\tilde{s}}$:
$$\tilde{\lambda}_1^{(1)}:=\frac{\sum_{i=1}^p \widetilde{\mathcal{G}}_i}{p}, \quad \tilde{\lambda}_1^{(2)}:=\frac{\sum_{i=1}^p \widetilde{\mathcal{G}}_i^2}{\sum_{i=1}^p \widetilde{\mathcal{G}}_i},$$
where $\widetilde{\mathcal{G}}_i$ denotes the sum of elements in the $i$-th row of $\widetilde{\mathcal{G}}_{\tilde{s}}$.

Then we apply the von Mises iteration method to show that $\tilde{\lambda}_1^{(1)}$ and $\tilde{\lambda}_1^{(2)}$ are good approximations to $\tilde{\lambda}_1$.
\subsection{Three results relating $\tilde{\lambda}_1^{(1)}, \tilde{\lambda}_1^{(2)}$ and $\tilde{\lambda}_1$}
The following are three key results relating the three random variables $\tilde{\lambda}_1^{(1)}, \tilde{\lambda}_1^{(2)}$ and $\tilde{\lambda}_1$.
\begin{lemma}\label{lem}
Assume $d^\bot \geq 5$. Then
    $$\tilde{\lambda}_1^{(1)}-(p+1) \to^d \mathcal{N}(0,4y).$$
\end{lemma}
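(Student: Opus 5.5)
We will reduce $\tilde{\lambda}_1^{(1)}$ to an explicit quadratic expression in the $\pm 1$ entries of $\Phi_s$ and then apply a classical central limit theorem to a sum of independent row statistics. Write $\Phi_s=(\ep_{ij})$ with $\ep_{ij}\in\{\pm1\}$. By (\ref{E1}) the $0$-$1$ entries are $(1-\ep_{ij})/2$. Let $\mathbf{1}$ denote the all-one vector. Since $\sum_i \widetilde{\mathcal{G}}_i=\mathbf{1}^T\widetilde{\mathcal{G}}_{\tilde{s}}\mathbf{1}=\frac{4}{n}\|\widetilde{\Phi}_{\tilde{s}}^T\mathbf{1}\|^2$, we get, with column sums $S_j:=\sum_{i=1}^p\ep_{ij}$,
$$\tilde{\lambda}_1^{(1)}=\frac{1}{np}\sum_{j=1}^n (p-S_j)^2 = p-\frac{2}{n}\sum_{j=1}^n S_j+\frac{1}{np}\sum_{j=1}^n S_j^2 .$$
Expanding $S_j^2$ gives
$$\frac{1}{np}\sum_j S_j^2 = 1+\frac{1}{np}\sum_{i\neq i'}\sum_{j}\ep_{ij}\ep_{i'j}.$$
Hence
$$\tilde{\lambda}_1^{(1)}-(p+1)=-\frac{2}{n}\sum_{i=1}^p r_i+R_n,\qquad r_i:=\sum_{j=1}^n \ep_{ij},\quad R_n:=\frac{1}{np}\sum_{i\neq i'}\langle s(i),s(i')\rangle .$$

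The only input from the code is the standard consequence of $d^\bot\ge 5$: for a uniformly random codeword, any $4$ coordinates are independent and uniform. In particular each row $(\ep_{i1},\dots,\ep_{in})$ is $4$-wise independent with mean zero.

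\textbf{The term $R_n$ is negligible.} We will show $\E(R_n^2,\Omega_p)=O(1/n)$.
\begin{itemize}
\item Take distinct row indices $i\neq i'$. The rows are independent and uniform in $\psi(\mathcal{C})$, so the coordinatewise product $s(i)s(i')$ is again uniform in $\psi(\mathcal{C})$ by linearity. By pairwise independence, $\E\langle s(i),s(i')\rangle^2=n$.
\item Consider two unordered pairs $\{i,i'\}\neq\{k,k'\}$. If the pairs are disjoint, the covariance vanishes by independence of rows. If they share one index, say $i=k$, conditioning on $s(i)$ and averaging over $s(i')$ gives zero, so the covariance again vanishes.
\item Altogether $\E(R_n^2)=\frac{2p(p-1)n}{n^2p^2}\to 0$, so $R_n\to 0$ in $L^2$ and hence in probability.
\end{itemize}

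\textbf{The main term is asymptotically Gaussian.} The $r_i$ are i.i.d. across rows with $\E r_i=0$ and $\E r_i^2=n$ (pairwise independence). Using $4$-wise independence, $\E r_i^4=3n^2-2n$. Set $X_{n,i}:=-\frac{2}{n}r_i$. Then
$$\sum_{i=1}^p \mathbb{V}(X_{n,i})=\frac{4p}{n}=4y .$$
The Lyapunov ratio satisfies
$$\sum_{i=1}^p \E X_{n,i}^4 = \frac{16p(3n^2-2n)}{n^4}=O(1/n)\to 0 .$$
By the Lyapunov (Lindeberg--Feller) CLT for triangular arrays, $\sum_i X_{n,i}\to^d\mathcal{N}(0,4y)$. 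Slutsky's lemma then yields the claim. Alternatively, in the spirit of the paper, one could match all moments of $-\frac{2}{n}\sum_i r_i+R_n$ with Gaussian moments by expanding into words of row and column indices. The computation above shows that only the fourth moment, i.e.\ the $4$-wise independence guaranteed by $d^\bot\ge5$, is really needed.

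The main point requiring care is the second step. We must check that the non-central quadratic part concentrates at exactly $1$, which produces the shift $p+1$ rather than $p$, and that the cross terms $\langle s(i),s(i')\rangle$ have no covariance between different pairs. This uses the closure of $\mathcal{C}$ under addition together with pairwise independence of coordinates. The bound $d^\bot\ge 5$ enters only through the fourth moment of $r_i$ in the Lyapunov condition. In fact $d^\bot\ge3$ with a second-moment Lindeberg argument might already suffice, but we use the stated hypothesis.
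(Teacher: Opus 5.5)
Your proof is correct, and it takes a genuinely different route from the paper.

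\textbf{The paper's route.} It proves Lemma~\ref{lem} by the method of moments (Lemma~\ref{M}). It expands $\E((\tilde{\lambda}_1^{(1)}-(p+1))^\l,\widetilde{\Omega}_p)$ into products of the centred variables $X_{ijt}$ and encodes each term as an $n$-colored multigraph on the row indices. It then uses Lemma~\ref{W} to show that colorings with a dependent coloring set at some vertex are negligible when $d^\bot\ge 5$. The only surviving components are pairs of equally colored edges sharing one vertex, each contributing $1/16$, and these are counted as $(\l-1)!!\,4^{\l/2}$.

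\textbf{Your route.} You pass to the $\pm1$ entries and get the exact identity $\tilde{\lambda}_1^{(1)}-(p+1)=-\frac{2}{n}\sum_i r_i+R_n$. The fluctuation is then just the centred total weight of the $p$ sampled codewords, which is a sum of i.i.d.\ row statistics. The remainder is an off-diagonal term with $\E(R_n^2)=2(p-1)/(np)$. Lyapunov and Slutsky then finish.

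\textbf{What each buys.} Your argument is much shorter and needs neither Lemma~\ref{W} nor any graph counting. It also shows exactly where the code enters: for $d^\bot\ge3$ one has $\E(r_i^4)=3n^2-2n+24B_4(\mathcal{C})$. So your argument goes through verbatim under $d^\bot\ge 3$ together with $B_4(\mathcal{C})=o(n^3)$. That condition involves only the single coefficient $B_4$, rather than every $B_w$ as in Remark~\ref{rm2}. Your decomposition also gives $\E((\tilde{\lambda}_1^{(1)}-(p+1))^2)=4y+O(1/n)$ directly, because the cross terms $\E(r_k\langle s(i),s(i')\rangle)$ vanish. This second-moment bound is all that the later tail estimates on $\tilde{\lambda}_1^{(1)}$ in the proofs of Lemmas~\ref{lem2} and \ref{lem3} require. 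The paper's route, in exchange, gives convergence of every moment with an explicit $O_\l(n^{-1})$ error, which is stronger than convergence in distribution. It also stays inside the combinatorial framework used elsewhere for code-based random matrices.

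\textbf{A correction to your closing remark.} $d^\bot\ge3$ alone does not suffice. Take the simplex code, which has $d^\bot=3$ and appears in the paper's simulations. Every nonzero codeword has weight $(n+1)/2$, so $r_i=-1$ unless row $i$ is the zero codeword. That happens with probability $1/(n+1)$, and then $r_i=n$. Hence $-\frac{2}{n}\sum_i r_i\to^d 2y-2N_y$, where $N_y$ is Poisson with mean $y$. This limit has mean $0$ and variance $4y$ but is not Gaussian, and the Lindeberg condition fails. So some fourth-moment input such as $B_4(\mathcal{C})=o(n^3)$ is genuinely needed, and in your proof that is exactly what $d^\bot\ge5$ supplies.
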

\begin{lemma}\label{lem2}
Assume $d^\bot \geq 5$. Then for any $\ep \in (0,1/2)$,
$$\P(|\tilde{\lambda}_1^{(2)}-\tilde{\lambda}_1^{(1)}-y| > n^{-\ep},\widetilde{\Omega}_p)=O(n^{-1+2\ep}).$$
\end{lemma}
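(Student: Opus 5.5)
The plan is to write $\tilde{\lambda}_1^{(2)}-\tilde{\lambda}_1^{(1)}$ as a ratio of two quadratic statistics of the row sums, and then apply Chebyshev's inequality to the numerator and to the denominator separately. Put $S:=\sum_{i=1}^p\widetilde{\mathcal{G}}_i=\mathbf{1}_p^T\widetilde{\mathcal{G}}_{\tilde{s}}\mathbf{1}_p$, so that $\tilde{\lambda}_1^{(1)}=S/p$. Write $\bar{\mathcal{G}}:=S/p$ for the average row sum. Since $\sum_i(\widetilde{\mathcal{G}}_i-\bar{\mathcal{G}})^2=\sum_i\widetilde{\mathcal{G}}_i^2-S^2/p$, we get the exact identity
$$\tilde{\lambda}_1^{(2)}-\tilde{\lambda}_1^{(1)}=\frac{N}{S},\qquad N:=\sum_{i=1}^p\bigl(\widetilde{\mathcal{G}}_i-\bar{\mathcal{G}}\bigr)^2 .$$
It therefore suffices to prove two estimates:
$$S=p^2+O_{\mathbb{P}}(n^{3/2}),\qquad N=yp^2+O_{\mathbb{P}}(n^{3/2}),$$
each in the quantitative form $\mathbb{V}(S),\mathbb{V}(N)=O(n^3)$, together with $\E(S)=p^2+O(n)$ and $\E(N)=yp^2+O(n)$.

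Granting these, Chebyshev's inequality gives $\P(|N-yp^2|>c\,n^{2-\ep})=O(n^{3}/n^{4-2\ep})=O(n^{-1+2\ep})$, and the same bound for $S$. On the complement of these two events,
$$\frac{N}{S}=\frac{yp^2+O(n^{2-\ep})}{p^2+O(n^{2-\ep})}=y+O(n^{-\ep}).$$
Adjusting constants yields exactly the claimed bound $O(n^{-1+2\ep})$.

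For the moment computations I use the 0-1 rows $x_1,\dots,x_p$ of $\widetilde{\Phi}_{\tilde{s}}$ and the column sums $c_k=\sum_i x_{ik}$. In this notation
$$\widetilde{\mathcal{G}}_i=\frac{4}{n}\sum_{k=1}^n x_{ik}c_k,\qquad S=\frac4n\sum_k c_k^2 .$$
Two independence facts drive everything. Different rows are independent. Within a row, the condition $d^\bot\geq5$ makes any $\leq4$ coordinates independent uniform bits. Writing $c_k=p/2+\frac12 w_k$, where $w_k=\sum_i(-1)^{x_{ik}}$ is centered with variance $p$, the dominant parts are as follows. For $S$ we get $S=\frac4n\bigl(np^2/4+\frac p2\sum_k w_k+\frac14\sum_k w_k^2\bigr)$, which gives $p^2+p+\frac{2p}{n}\sum_kw_k$ to leading order. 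For the row sums, expanding gives
$$\widetilde{\mathcal{G}}_i-\bar{\mathcal{G}}\approx\frac{2p}{n}\Bigl(\mathrm{wt}(x_i)-\overline{\mathrm{wt}}\Bigr)+(\text{smaller terms}),$$
and $\mathrm{wt}(x_i)-n/2$ has variance $n/4$ by pairwise independence. Hence $\E(N)\approx p\cdot\frac{4p^2}{n^2}\cdot\frac n4=yp^2$. I will also track the diagonal contribution $x_{ik}^2=x_{ik}$ and the cross terms, checking that they only shift the mean by $O(n)$.

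The main obstacle is the variance bound $\mathbb{V}(N)=O(n^3)$. Since $N$ is quartic in the $c_k$, expanding $\E(N^2)$ produces products of up to eight entries from a single row. Only products over $\le 4$ coordinates factor by $d^\bot\geq5$. For a product $\prod_{k\in T}(-1)^{x_{ik}}$ with $5\leq|T|\leq8$, the expectation equals $1$ exactly when $T$ is the support of a dual codeword, and equals $0$ otherwise. I plan to bound these configurations by counting dual codewords: Lemma \ref{W} applied to $\mathcal{C}^\bot$ (with $r=2$) gives $B_w=O(n^{w-2})$ for $w\le 8$. Each such index pattern thus loses a factor $n^{2}$ relative to the corresponding fully free pattern, and that factor should absorb the excess. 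The bookkeeping will be organised by the pattern of row indices, namely how many distinct rows and whether they coincide. After subtracting $\E(N)^2$, the surviving pure-independence terms should contribute $O(n^3)$, exactly as in the i.i.d. Bernoulli case. The analogous but easier computation for $\mathbb{V}(S)$, which is only quadratic in the $c_k$, uses at most four coordinates per row and is settled by the $d^\bot\ge5$ condition alone.
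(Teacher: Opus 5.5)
Your proof is correct, and it is a light reorganisation of the paper's argument rather than a new route. Your identity $\tilde{\lambda}_1^{(2)}-\tilde{\lambda}_1^{(1)}=N/S$ is the paper's identity (\ref{L}) in disguise, since $S=p\tilde{\lambda}_1^{(1)}$ and
\[
N=\sum_{i=1}^p\bigl(\widetilde{\mathcal{G}}_i-(p+1)\bigr)^2-p\bigl(\tilde{\lambda}_1^{(1)}-(p+1)\bigr)^2 .
\]

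\textbf{How the routes differ.} The paper centres at the deterministic value $p+1$. The first piece is then a quadratic form in the mean-zero variables $X_{ijt}$, and Lemma \ref{lem4} handles it by Chebyshev with variance $O(n^3)$. The second piece and the denominator only need concentration of $\tilde{\lambda}_1^{(1)}$, which the paper attributes to Lemma \ref{lem}; the actual rate comes from Chebyshev with the second moment in Lemma \ref{M}. You centre at the empirical mean and apply Chebyshev to $N$ and $S$ directly. This makes the rate for the denominator explicit and avoids any appeal to the CLT. The price is a variance computation for $N$ whose summands are no longer products of centred $X_{ijt}$'s. The cleanest way to carry it out is through the displayed decomposition, which gives $\mathbb{V}(N)\le 2\,\mathbb{V}\bigl(\sum_i(\widetilde{\mathcal{G}}_i-(p+1))^2\bigr)+2p^2\,\E\bigl((\tilde{\lambda}_1^{(1)}-(p+1))^4\bigr)=O(n^3)+O(n^2)$.

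\textbf{The anticipated obstacle does not exist.} Each $\widetilde{\mathcal{G}}_i=\frac4n\sum_{j,t}x_{it}x_{jt}$ is a sum of terms carrying a single column index. Hence every monomial of $N^2$ involves at most four distinct columns in total, and every monomial of $S^2$ at most two, even though one row entry may appear up to eight times with multiplicity. Products over $5\le|T|\le 8$ coordinates of a row never occur, so no dual-codeword counting via Lemma \ref{W} is needed. By $d^\bot\ge5$, every moment you need coincides exactly with its i.i.d.\ Bernoulli value; this is precisely Statement 4 of Lemma \ref{X}.
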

\begin{lemma}\label{lem3}
Assume $d^\bot \geq 5$. Then for any $\ep \in (0,1/2)$,
    $$\P(|\tilde{\lambda}_1^{(2)}-\tilde{\lambda}_1| > n^{-\ep},\widetilde{\Omega}_p)=O(n^{-1+2\ep}).$$
\end{lemma}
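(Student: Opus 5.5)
The plan is to run one step of the von Mises (power) iteration, starting from the normalized all-ones vector. Set $G:=\widetilde{\mathcal{G}}_{\tilde{s}}$, $e:=p^{-1/2}(1,\dots,1)^T$, and let $v$ be a unit eigenvector of $G$ for $\tilde{\lambda}_1$. Since $G$ has nonnegative entries, Perron--Frobenius lets us take $v\ge 0$. In this notation $\tilde{\lambda}_1^{(1)}=e^TGe$ and $\tilde{\lambda}_1^{(2)}=e^TG^2e/e^TGe$. Decompose $e=cv+w$ with $w\perp v$ and $c^2+\|w\|^2=1$. Because $w$ lies in an invariant subspace, the cross terms vanish, and
$$\tilde{\lambda}_1^{(2)}-\tilde{\lambda}_1=\frac{w^TG^2w-\tilde{\lambda}_1\,w^TGw}{e^TGe}=\frac{w^TG(G-\tilde{\lambda}_1)w}{\tilde{\lambda}_1^{(1)}}.$$
On $v^\perp$ the spectrum of $G$ lies in $[0,\tilde{\lambda}_2]$. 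Hence the numerator is bounded in absolute value by $\tilde{\lambda}_1\tilde{\lambda}_2\|w\|^2$, and
$$|\tilde{\lambda}_1^{(2)}-\tilde{\lambda}_1|\le \frac{\tilde{\lambda}_1\tilde{\lambda}_2\|w\|^2}{\tilde{\lambda}_1^{(1)}}.$$
The proof therefore reduces to three estimates, each holding outside an event of probability $O(n^{-1+2\ep})$:
\begin{itemize}
\item[(a)] $\tilde{\lambda}_1^{(1)}\ge p/2$;
\item[(b)] $\tilde{\lambda}_2$ is small;
\item[(c)] $\|w\|^2$ is small.
\end{itemize}
Once (a) holds, $\tilde{\lambda}_1$ is also of order $p$, since $\tilde{\lambda}_1\ge\tilde{\lambda}_1^{(1)}$ and $\tilde{\lambda}_1\le \mathrm{Tr}\,G=O(p)$ with high probability.

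For (a), I would compute $\E(\tilde{\lambda}_1^{(1)},\widetilde{\Omega}_p)=p+1+O(1)$ and show $\mathbb{V}(\tilde{\lambda}_1^{(1)},\widetilde{\Omega}_p)=O(1)$. This is the same second-moment computation that underlies Lemma \ref{lem}: write $\tilde{\lambda}_1^{(1)}=\frac{4}{np}\|\sum_i \tilde{s}(i)\|^2$ and use that, when $d^\bot\ge 5$, any four coordinates of a uniform codeword are independent uniform bits. Chebyshev's inequality then gives (a) with room to spare.

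For (b), I use Lemma \ref{EI2}: $\tilde{\lambda}_2\le\lambda_1$, where $\lambda_1$ is the top eigenvalue of $\mathcal{G}_s$. The fourth spectral moment satisfies $\E(\mathrm{Tr}\,\mathcal{G}_s^4,\Omega_p)=O(p)$ when $d^\bot\ge5$, because only $4$-wise independence of coordinates enters the expansion, exactly as in \cite{CTX}. Markov's inequality then gives $\lambda_1\le n^{1/4+\delta}$ outside an event of small probability. Even the cruder bound $\lambda_1\le(\mathrm{Tr}\,\mathcal{G}_s^2)^{1/2}=O(\sqrt n)$, obtained from the second moment, would suffice for the exponents below.

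For (c), the key observation is that $r^2:=\|Ge-\tilde{\lambda}_1^{(1)}e\|^2=\tilde{\lambda}_1^{(1)}\tilde{\lambda}_1^{(2)}-(\tilde{\lambda}_1^{(1)})^2$ equals $\frac1p\sum_i(\widetilde{\mathcal{G}}_i-\bar{\mathcal{G}})^2$, the empirical variance of the row sums. Since $e=cv+w$, we have $(G-\tilde{\lambda}_1^{(1)})e=c(\tilde{\lambda}_1-\tilde{\lambda}_1^{(1)})v+(G-\tilde{\lambda}_1^{(1)})w$, and the two summands are orthogonal. Restricted to $v^\perp$, $G-\tilde{\lambda}_1^{(1)}$ has all eigenvalues at most $\tilde{\lambda}_2-\tilde{\lambda}_1^{(1)}\le -p/4$ on the good event. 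This yields $\|w\|\le r/(\tilde{\lambda}_1^{(1)}-\tilde{\lambda}_2)=O(r/p)$.

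It remains to bound $\E(r^2)$. Write $\widetilde{\mathcal{G}}_i=\frac4n\,\tilde{s}(i)\cdot S$ with $S=\sum_j\tilde{s}(j)$. The fluctuation of $\widetilde{\mathcal{G}}_i$ is dominated by $\frac{4}{n}(\tilde{s}(i)-\tfrac12\mathbf 1)\cdot \tfrac p2\mathbf 1$, which has variance $O(n)$; the remaining terms are smaller. Expanding $\E(\sum_i(\widetilde{\mathcal{G}}_i-\bar{\mathcal{G}})^2)$ as a sum over index patterns and coordinate patterns, only correlations of at most four coordinates of a single codeword appear. These are exactly those of i.i.d. Bernoulli$(1/2)$ bits because $d^\bot\ge5$, so we should get $\E(r^2)=O(n)$. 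Markov's inequality then gives $r^2\le n^{1+2\ep}$ outside probability $O(n^{-2\ep})$, which is too weak. I would instead take a second moment of $r^2$, or apply Markov to $\|w\|^2$ directly after showing $\E(r^2)=O(n)$ with the exact leading term. I expect the probability bookkeeping to need care so as to land on $O(n^{-1+2\ep})$.

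Putting (a)--(c) together gives
$$|\tilde{\lambda}_1^{(2)}-\tilde{\lambda}_1|\lesssim \frac{p\cdot\tilde{\lambda}_2\cdot r^2/p^2}{p}\lesssim \frac{\tilde{\lambda}_2\, r^2}{p^2}.$$
With $r^2\approx n$ and $\tilde{\lambda}_2\le n^{1/4+\delta}$, the right-hand side is about $n^{-3/4+\delta}$, which is far below $n^{-\ep}$. The main obstacle is the moment estimate for $r^2$, and its variance, under only $d^\bot\ge5$. It requires a careful combinatorial count of which index coincidences among $\tilde{s}(i),\tilde{s}(j),\tilde{s}(k)$ and which coordinate tuples contribute. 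In particular, codewords of $\mathcal{C}^\bot$ of weight $5$ and $6$ must be controlled by Lemma \ref{W} when higher moments produce five- or six-coordinate correlations. The final Chebyshev step should be arranged so that the exceptional probability is $O(n^{-1+2\ep})$, matching the form of Lemma \ref{lem2}.
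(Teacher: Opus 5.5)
Your proposal is correct and is essentially the paper's argument: one von Mises step from the all-ones vector, splitting off its component along the top eigenvector, bounding the orthogonal remainder by the row-sum fluctuation $\sum_i(\widetilde{\mathcal{G}}_i-(p+1))^2$ divided by the spectral gap, and controlling $\tilde{\lambda}_2\le\lambda_1$ by interlacing plus moment bounds for $\mathcal{G}_s$. The paper uses all moments to get $\lambda_1\prec 1$, but your fourth-moment bound is enough once $r^2$ is controlled by its second moment. The step you flag as the main obstacle is exactly the paper's Lemma~\ref{lem4}, proved by a variance bound of order $O(n^3)$ and Chebyshev's inequality; there each row carries at most four coordinates, so $d^\bot\ge 5$ makes every expectation coincide with its i.i.d.\ Bernoulli value, and no weight-$5$ or weight-$6$ dual codewords (nor Lemma~\ref{W}) are needed.
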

The statements of these three lemmas are essentially the same as \cite[Lemma 2.1, Eq. (2.3), Eq. (3.13)]{Cheng} for $\mu=\sigma=1$ respectively. Their proofs (especially for the first one) were relatively simple due to the nice properties of Gaussian distribution and entry independence. However in our setting with Bernoulli entries and dependence derived from the codes, our proofs are much more technically involved. To streamline the idea of proof, we assume them and complete the proof of Theorem \ref{mainthm2} first. The proofs of the three lemmas are postponed to Sections \ref{plem3}--\ref{plem}.

\subsection{Proof of Theorem \ref{mainthm2}}
We now prove Theorem \ref{mainthm2} using Lemmas \ref{lem}--\ref{lem3}.

First, taking $\ep=1/4$ and combining Lemmas \ref{lem2} and \ref{lem3}, we have
$$\P(|\tilde{\lambda}_1-\tilde{\lambda}_1^{(1)}-y| > 2n^{-1/4},\widetilde{\Omega}_p)=O(n^{-1/2}).$$
By Lemma \ref{lem}, we know that, for any $x \in \mathbb{R}$,
$$\P(\tilde{\lambda}_1^{(1)}-(p+1) \leq x,\widetilde{\Omega}_p)=F(x)+o(1)$$
where
$$F(x)=\frac{1}{\sqrt{8\pi y}}\int_{-\infty}^x e^{-\frac{t^2}{8y}}dt$$
is the cumulative distribution function of $\mathcal{N}(0,4y)$.

Then, on one hand, we have
\begin{align*}
    \P(\tilde{\lambda}_1-(p+1+y) \leq x,\widetilde{\Omega}_p)&=\P(\tilde{\lambda}_1-(p+1+y) \leq x \wedge |\tilde{\lambda}_1-\tilde{\lambda}_1^{(1)}-y| \leq  2n^{-1/4},\widetilde{\Omega}_p)\\
    &+\P(\tilde{\lambda}_1-(p+1+y) \leq x \wedge |\tilde{\lambda}_1-\tilde{\lambda}_1^{(1)}-y| > 2n^{-1/4},\widetilde{\Omega}_p)\\
    &\leq \P(\tilde{\lambda}_1^{(1)}-(p+1) \leq x+2n^{-1/4},\widetilde{\Omega}_p)+\P(|\tilde{\lambda}_1-\tilde{\lambda}_1^{(1)}-y| > 2n^{-1/4},\widetilde{\Omega}_p)\\
    &=(F(x+2n^{-1/4})+o(1))+O(n^{-1/2})\\
    &=F(x)+o(1).
\end{align*}
The last asymptotic equality follows from the differentiability of $F$ and mean-value theorem.

On the other hand,
\begin{align*}
    &\P(\tilde{\lambda}_1-(p+1+y) \leq x,\widetilde{\Omega}_p)\\
    &=1-\P(\tilde{\lambda}_1-(p+1+y) > x,\widetilde{\Omega}_p)\\
    &=1-\P(\tilde{\lambda}_1-(p+1+y) > x \wedge |\tilde{\lambda}_1-\tilde{\lambda}_1^{(1)}-y| \leq  2n^{-1/4},\widetilde{\Omega}_p)\\
    &-\P(\tilde{\lambda}_1-(p+1+y) > x \wedge |\tilde{\lambda}_1-\tilde{\lambda}_1^{(1)}-y| > 2n^{-1/4},\widetilde{\Omega}_p)\\
    &\geq 1-\P(\tilde{\lambda}_1^{(1)}-(p+1) > x-2n^{-1/4},\widetilde{\Omega}_p)-\P(|\tilde{\lambda}_1-\tilde{\lambda}_1^{(1)}-y| > 2n^{-1/4},\widetilde{\Omega}_p)\\
    &=1-(1-F(x-2n^{-1/4})+o(1))+O(n^{-1/2})\\
    &=F(x)+o(1).
\end{align*}
Combining both asymptotic inequalities completes the proof of Theorem \ref{mainthm2}.
\section{Proof of Lemma \ref{lem3}}\label{plem3}
Among the proofs of Lemmas \ref{lem}--\ref{lem3}, that for Lemma \ref{lem3} is slightly less technical and therefore we prove it first. In \cite{Cheng}, authors used the well-known result by Bai and Yin \cite{Yin} that the largest eigenvalue of the central sample covariance matrix converges to the upper bound of the support of the MP law with high probability. However in our setting, their result only applies to $d^\bot \to \infty$. Nevertheless, when $d^\bot \geq 5$, in fact we only need the following weaker result.
\begin{lemma}\label{lambda}
    Assume $d^\bot \geq 5$. Then $\lambda_1 \prec 1$ and $\tilde{\lambda}_2 \prec 1$.
\end{lemma}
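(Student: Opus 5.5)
We first prove $\lambda_1 \prec 1$ and then deduce $\tilde{\lambda}_2 \prec 1$ from the interlacing in Lemma \ref{EI2}. Indeed, Lemma \ref{EI2} with $i=2$ gives $\tilde{\lambda}_2 \leq \lambda_1$ deterministically, once $s$ and $\tilde{s}$ are coupled through the same $p$ codewords. Under this coupling the laws of $\widetilde{\mathcal{G}}_{\tilde{s}}$ on $\widetilde{\Omega}_p$ and of $\mathcal{G}_s$ on $\Omega_p$ are the images of one uniform choice of $p$ codewords. So $\P(\tilde{\lambda}_2 > n^\ep,\widetilde{\Omega}_p) \leq \P(\lambda_1 > n^\ep,\Omega_p)$, and the second claim follows from the first.

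To bound $\lambda_1$ we use a crude moment method, since we only need $\lambda_1 \leq n^{\ep}$ off an event of probability $n^{-D}$. The natural quantity is $\lambda_1^2 \leq \mathrm{Tr}(\mathcal{G}_s^2)=\sum_{i,j}(\mathcal{G}_s)_{ij}^2$. The diagonal entries equal $1$, contributing $p$. For $i \neq j$, $(\mathcal{G}_s)_{ij}=\frac{1}{n}\sum_{k=1}^n s(i)_k s(j)_k$. Here $s(i),s(j)$ are independent uniform elements of $\psi(\mathcal{C})$, so $s(i)\circ s(j)$ is again uniform on $\psi(\mathcal{C})$. Hence $(\mathcal{G}_s)_{ij}$ is distributed as $\frac1n\sum_k \psi(c)_k$ for $c$ uniform in $\mathcal{C}$.

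The key input is the standard fact that $\E\bigl(\prod_{k\in T}\psi(c)_k\bigr)$ over uniform $c\in\mathcal{C}$ equals $1$ if the indicator vector of $T$ lies in $\mathcal{C}^\bot$ and $0$ otherwise. Thus moments of $X=\sum_k\psi(c)_k$ are governed by the dual weight distribution. In particular, $d^\bot\geq 5$ makes the coordinates $4$-wise independent, so $\E X^2=n$ and $\E X^4=3n^2-2n$ exactly as in the i.i.d.\ case.

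Using only second moments, $\E\,\mathrm{Tr}(\mathcal{G}_s^2)=p+p(p-1)/n=O(n)$, which gives $\lambda_1=O(\sqrt n)$ typically. This is far too weak, so instead we work with higher traces $\mathrm{Tr}(\mathcal{G}_s^{m})$ for a fixed large $m$. We need $\E\,\mathrm{Tr}(\mathcal{G}_s^m)\leq C_m n$, since then Markov's inequality gives
$$\P(\lambda_1>n^\ep)\leq \frac{C_m n}{n^{m\ep}},$$
and choosing $m>(D+1)/\ep$ finishes the proof. Expanding $\mathrm{Tr}(\mathcal{G}_s^m)$ as a sum over closed walks $i_1\to\cdots\to i_m$ in the row indices and column indices $k_1,\dots,k_m$, each term's expectation factorizes over the distinct rows. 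For each row, the expectation is the indicator that the multiset of column indices attached to it, reduced mod 2, lies in $\mathcal{C}^\bot$.

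Terms where every such reduced set is empty are exactly those with nonzero expectation in the i.i.d.\ $\pm1$ case; they contribute $O_m(n)$ by the classical count underlying the MP moments. The remaining terms have some row whose odd-multiplicity column set is a nonzero dual codeword, hence of size at least $5$. \textbf{The main obstacle} is bounding these extra terms. The plan is to use Lemma \ref{W} applied to $\mathcal{C}^\bot$ (minimum distance $d^\bot\geq5$, so $r\geq2$): this gives $B_w=O_w(n^{w-2})$. Each nonzero dual codeword constraint of weight $w$ thus fixes $w$ column indices at a cost of $n^{w-2}$ instead of $n^{w}$, a saving of $n^2$. We then carry out a graph-counting argument showing that this saving, together with the loss of free row and column indices forced by the parity constraints, keeps the total contribution at $O_m(n)$. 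If the full count proves too messy, we fall back on the approach of \cite{CTX}: their local law for $m_{\mathcal{G}_s}$ near and above the edge, combined with the bound $\|\mathcal{G}_s\|\leq \mathrm{Tr}$ controls, should exclude eigenvalues of order $n^{\ep}$. Finally we check that this weaker bound is all that Section \ref{plem3} actually needs.
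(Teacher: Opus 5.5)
Your skeleton is the paper's. You bound $\E\,\mathrm{Tr}(\mathcal{G}_s^m)\le C_m n$ for fixed $m$, apply Markov's inequality with $m>(D+1)/\ep$, and then use $\tilde{\lambda}_2\le\lambda_1$ from Lemma \ref{EI2} under the natural coupling. Those steps are fine.

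The gap is that the one substantive input, $\E\,\mathrm{Tr}(\mathcal{G}_s^m)=O_m(n)$, is never proved; you only describe a plan. The paper does not prove it either. It imports it from \cite{OQBT}: under $d^\bot\ge5$, $\E(A_\l(s),\Omega_p)=\beta_{\mathrm{MP}_y,\l}+O_\l(1/n)$, i.e.\ $\E\,\mathrm{Tr}(\mathcal{G}_s^\l)=p\beta_{\mathrm{MP}_y,\l}+O_\l(1)$.

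Your counting sketch amounts to reproving that theorem, and it is not routine. Once a row has a nonempty odd set, row--column edges may be traversed only once, so a term can carry more than $m+1$ free indices. Take $m=6$, with rows alternating between two distinct indices $a\neq b$ and six distinct columns $k_1,\dots,k_6$.
\begin{itemize}
\item Every edge is used once, and both rows have odd set $\{k_1,\dots,k_6\}$.
\item The naive size is $p^2n^6/n^6\asymp n^2$, which is larger than the $O(n)$ main term.
\item Only the requirement that $\{k_1,\dots,k_6\}$ be a weight-$6$ dual codeword ($O(n^4)$ choices by Lemma \ref{W}) brings it down to $O(1)$.
\end{itemize}
In general the number of extra free indices grows with the number of rows carrying odd sets. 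You would have to show that the combined savings of all these constraints always beat that gain, even though they share columns and need not be independent. Asserting ``a saving of $n^2$ per constraint'' does not do this.

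Your fallback is not viable. An averaged Stieltjes-transform estimate such as (\ref{S2}), or even an optimal $1/(n\eta)$ bound, cannot rule out a bounded number of outliers. Moving one eigenvalue anywhere changes $m(z)$ by at most $2/(p\eta)$. The paper itself illustrates this: by (\ref{S3}), $\widetilde{\mathcal{G}}_{\tilde{s}}$ obeys the same local bound as $\mathcal{G}_s$, yet $\tilde{\lambda}_1\approx p$.

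So you should close the step by citing \cite{OQBT}, or by actually completing the joint count. After that, your Markov argument finishes the proof exactly as in the paper.
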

\begin{proof}
By \cite{OQBT}, we know that, for any fixed positive integer $\l$,
$$\E(A_\l(s),\Omega_p)=\beta_{\mathrm{MP}_y,\l}+O_\l\left(\frac{1}{n}\right),$$
where $A_\l(s)$ is the $\l$-th moment of $\mu_{\mathcal{G}_s}$.

This means
$$\E\left(\sum_{i=1}^p \lambda_i^\l,\Omega_p\right)=p\beta_{\mathrm{MP}_y,\l}+O_\l(1).$$
Since all $\lambda_i$'s are non-negative, we must have
$$\E(\lambda_1^\l,\Omega_p) \leq \E\left(\sum_{i=1}^p \lambda_i^\l,\Omega_p\right)=p\beta_{\mathrm{MP}_y,\l}+O_\l(1).$$
Applying Markov's inequality, we see that, for any $\ep > 0$ and $D > 0$,
$$\P(\lambda_1 > n^{\ep},\Omega_p) \leq \frac{\E(\lambda_1^\l,\Omega_p)}{n^{\ep\l}} \leq \frac{y\beta_{\mathrm{MP}_y,\l}+O_\l(n^{-1})}{n^{\ep\l-1}} < n^{-D}$$
by choosing $\l=\lceil (D+2)/\ep\rceil$ and $n > N=2y\beta_{\mathrm{MP}_y,\l}$.

The result for $\tilde{\lambda}_2$ then follows from Lemma \ref{EI}.
\end{proof}
The rest of the proof of Lemma \ref{lem3} is then essentially similar to those in \cite[Section 3]{Cheng}, namely to perform an (orthogonal) eigenvalue decomposition and apply the von Mises iteration method. For the sake of completeness, we still provide the detailed proof here, modifying notations adapted to our setting.

First, since $\widetilde{\mathcal{G}}_{\tilde{s}}$ is real symmetric, it possesses a spectral decomposition
    $$\widetilde{\mathcal{G}}_{\tilde{s}}=\sum_{i=1}^p \tilde{\lambda}_i \hat{\mathbf{u}}_i\hat{\mathbf{u}}_i^T,$$
    where $\hat{\mathbf{u}}_i=(\hat{u}_{i1},\cdots,\hat{u}_{ip})^T$ is a unit (real) eigenvector corresponding to eigenvalue $\tilde{\lambda}_i$, together which forms an orthonormal eigenbasis of $\mathbb{R}^p$.

    Denote $\mathbf{v}:=\left(\sum_{j=1}^p \hat{u}_{1j}\right)\hat{\mathbf{u}}_1$. In particular $\mathbf{v}$ is an eigenvector of $\widetilde{\mathcal{G}}_{\tilde{s}}$ associated with the largest eigenvalue $\tilde{\lambda}_1$. Let $\mathbf{1}$ be the all-one (column) vector of length $p$. We see that $\mathbf{r}=\mathbf{1}-\mathbf{v}$ satisfies
    $$\langle \mathbf{v}, \mathbf{r}\rangle=\langle\mathbf{v},\mathbf{1}\rangle-\|\mathbf{v}\|^2=\sum_{m=1}^p \left(\sum_{j=1}^p \hat{u}_{1j}\right)\hat{u}_{1m}-\left(\sum_{m=1}^p \hat{u}_{1m}\right)^2=0,$$
    that is, $\mathbf{r}$ is orthogonal to $\mathbf{v}$. Therefore
    $$\mathbf{r}=\sum_{i=2}^p c_i\hat{\mathbf{u}}_i$$
    for some $c_i \in \mathbb{R}$.

    Moreover, the $t$-th entry of $\widetilde{\mathcal{G}}_{\tilde{s}}\mathbf{r}$ is
    \begin{align*}
    (\widetilde{\mathcal{G}}_{\tilde{s}}\mathbf{r})_t&=\sum_{j=1}^p \widetilde{\mathcal{G}}_{jt}r_j\\
    &=\sum_{j=1}^p \left(\sum_{i=1}^p \tilde{\lambda}_i \hat{u}_{ij}\hat{u}_{it}\right)\left(\sum_{m=2}^p c_m\hat{u}_{mj}\right) \\
    &=\sum_{i=1}^p\tilde{\lambda}_i\hat{u}_{it}\sum_{m=2}^p c_m\sum_{j=1}^p \hat{u}_{ij}\hat{u}_{mj}\\
    &=\sum_{i=2}^p\tilde{\lambda}_i\hat{u}_{it}c_i,
    \end{align*}
    where in the last equality we use that $\{\hat{\mathbf{u}}_i: 1 \leq i \leq p\}$ are orthonormal.

    That is,
    $$\widetilde{\mathcal{G}}_{\tilde{s}}\mathbf{r}=\sum_{i=2}^p \tilde{\lambda}_ic_i\hat{\mathbf{u}}_i$$
    is also orthogonal to $\mathbf{v}$.
    
    Hence
    $$\|\widetilde{\mathcal{G}}_{\tilde{s}}\mathbf{r}\|^2=\sum_{i=2}^p \tilde{\lambda}_i^2c_i^2 \leq \tilde{\lambda}_2^2\sum_{i=2}^p c_i^2=\tilde{\lambda}_2^2\|\mathbf{r}\|^2,$$
    or equivalently,
    $$\|\widetilde{\mathcal{G}}_{\tilde{s}}\mathbf{r}\| \leq \tilde{\lambda}_2\|\mathbf{r}\|.$$
    By Lemma \ref{lambda}, we have, for any $\ep \in (0,1/2)$,
    $$\|\widetilde{\mathcal{G}}_{\tilde{s}}\mathbf{r}\| \leq n^{1/2-\ep} \|\mathbf{r}\|$$
    with probability at least $1-O(n^{-1+2\ep})$.
    
    Thus
    \begin{equation}\label{R}
    \|\widetilde{\mathcal{G}}_{\tilde{s}}\mathbf{r}-(p+1)\mathbf{r}\| \geq (p+1-n^{1/2-\ep})\|\mathbf{r}\|
    \end{equation}
    with probability at least $1-O(n^{-1+2\ep})$.
    
    Now
$$\widetilde{\mathcal{G}}_{\tilde{s}}\mathbf{1}=\widetilde{\mathcal{G}}_{\tilde{s}}\mathbf{v}+\widetilde{\mathcal{G}}_{\tilde{s}}\mathbf{r}=\tilde{\lambda}_1\mathbf{v}+\widetilde{\mathcal{G}}_{\tilde{s}}\mathbf{r},$$
so that
$$\widetilde{\mathcal{G}}_{\tilde{s}}\mathbf{1}-(p+1)\mathbf{1}=(\tilde{\lambda}_1-(p+1))\mathbf{v}+(\widetilde{\mathcal{G}}_{\tilde{s}}\mathbf{r}-(p+1)\mathbf{r}).$$
Since $\langle \mathbf{v},\mathbf{r}\rangle=\langle\mathbf{v},\widetilde{\mathcal{G}}_{\tilde{s}}\mathbf{r}\rangle=0$, we get
\begin{equation}\label{G'}
\sum_{i=1}^p (\widetilde{\mathcal{G}}_i-(p+1))^2=\|\widetilde{\mathcal{G}}_{\tilde{s}}\mathbf{1}-(p+1)\mathbf{1}\|^2=(\tilde{\lambda}_1^{(1)}-(p+1))^2\|\mathbf{v}\|^2+\|\widetilde{\mathcal{G}}_{\tilde{s}}\mathbf{r}-(p+1)\mathbf{r}\|^2.
\end{equation}
Here we need the following concentration inequality about the sum $\sum_{i=1}^p (\widetilde{\mathcal{G}}_i-(p+1))^2$, which is essentially \cite[Eq. (2.6)]{Cheng}. Yet its proof needs to be slightly adjusted for our setting due to dependence among row entries. We will assume this now and provide its proof in the next section when we prove Lemma \ref{lem2}.
\begin{lemma}\label{lem4}
    Assume $d^\bot \geq 5$. For any positive constant $C$ and $\ep \in (0,1/2)$, 
    $$\P\left(\left|\sum_{i=1}^p(\widetilde{\mathcal{G}}_i-(p+1))^2-p^2y\right| > Cn^{2-\ep},\widetilde{\Omega}_p\right) = O(n^{-1+2\ep}).$$
\end{lemma}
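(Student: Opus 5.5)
The plan is to rewrite everything in terms of the $\pm1$ matrix $\Phi_s$ via (\ref{E1}). I will isolate one dominant term whose mean is exactly $p^2y$ up to lower order, and control it by Chebyshev. The remaining terms will be shown small in second moment, and the cross terms handled by Cauchy--Schwarz.

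\textbf{Decomposition.} Let $\xi_{ik}\in\{\pm1\}$ be the entries of $\Phi_s$ and put $R_i=\sum_{k}\xi_{ik}$. From $\widetilde{\mathcal{G}}_{\tilde{s}}=\frac1n(J-\Phi_s)(J-\Phi_s)^T$ I compute the row sums directly:
$$\widetilde{\mathcal{G}}_i=\frac1n\sum_{k=1}^n(1-\xi_{ik})\Bigl(p-\sum_{j}\xi_{jk}\Bigr).$$
Using $\sum_k\xi_{ik}^2=n$, this gives
$$\widetilde{\mathcal{G}}_i-(p+1)=M_i+E_i,\qquad M_i=-\frac{p}{n}R_i,\qquad E_i=-\frac1n\sum_{j=1}^pR_j+\frac1n\sum_{j\neq i}\langle\xi_i,\xi_j\rangle .$$

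\textbf{Moment input from the code.} The only input needed is the standard fact that for any set $S$ of at most $4$ distinct coordinates, $\E\bigl(\prod_{k\in S}\xi_{ik}\bigr)=0$ unless $S=\emptyset$. This holds because the character sum over $\mathcal{C}$ vanishes unless the indicator vector of $S$ lies in $\mathcal{C}^\bot$, which $d^\bot\ge5$ excludes. Rows are independent. Consequently:
\begin{itemize}
\item $\E R_i^2=n$ and $\E R_i^4=3n^2-2n$, so $\mathbb{V}(R_i^2)=O(n^2)$.
\item $\E\bigl(\sum_jR_j\bigr)^2=pn$.
\item $\E\bigl(\sum_{j\ne i}\langle\xi_i,\xi_j\rangle\bigr)^2=(p-1)n$. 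Off-diagonal pairs $j\neq j'$ vanish: conditioning on $\xi_i$, the independent rows $\xi_j,\xi_{j'}$ have mean zero entries.
\end{itemize}
Hence $\E E_i^2=O(1)$ uniformly in $i$, and $\E\sum_iE_i^2=O(n)$.

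\textbf{Main term.} Set $X=\sum_iM_i^2=\frac{p^2}{n^2}\sum_iR_i^2$. Then $\E X=\frac{p^3}{n}=p^2y$ exactly. By independence of rows,
$$\mathbb{V}(X)=\frac{p^4}{n^4}\,p\,O(n^2)=O(n^3).$$
Chebyshev then gives $\P(|X-p^2y|>\tfrac C3n^{2-\ep})=O(n^{3}/n^{4-2\ep})=O(n^{-1+2\ep})$. This is precisely where $d^\bot\ge5$ (four-wise independence, for $\E R^4$) is used.

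\textbf{Remaining terms.} By Markov, $\P\bigl(\sum_iE_i^2>n^{2-2\ep}\bigr)\le O(n)/n^{2-2\ep}=O(n^{-1+2\ep})$. On the intersection of this event with the previous one, we have $\sum_iM_i^2\le 2p^2y=O(n^2)$. Cauchy--Schwarz then bounds the cross term: $|2\sum_iM_iE_i|\le2\sqrt{O(n^2)\,n^{2-2\ep}}=O(n^{2-\ep})$. Also $\sum_iE_i^2\le n^{2-2\ep}$ there.

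Adjusting constants (replacing $\ep$ by a slightly smaller value if needed to absorb the $O(\cdot)$ constant against $C$), we conclude that $\bigl|\sum_i(\widetilde{\mathcal{G}}_i-(p+1))^2-p^2y\bigr|\le Cn^{2-\ep}$ outside an event of probability $O(n^{-1+2\ep})$.

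The main obstacle is the variance bound for the dominant term. It needs the fourth moment $\E R_i^4$, i.e.\ the vanishing of all character sums of weight $\le4$; a weaker dual distance would leave $\mathbb{V}(R_i^2)$ of order $n^3$ and break the estimate. Everything else is second-moment bookkeeping.
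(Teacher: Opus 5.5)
Your proof is correct, and it follows a genuinely different route from the paper.

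\textbf{The paper's route.} The paper stays in the $0$-$1$ model. It expands $\sum_i(\widetilde{\mathcal{G}}_i-(p+1))^2$ as $\frac{16}{n^2}\sum_{i,j_1,j_2}\sum_{t_1,t_2}X_{ij_1t_1}X_{ij_2t_2}$. It then computes the first and second moments of the whole sum by direct expansion, with six row indices and four column indices for the second moment. Lemma~\ref{X} is used to discard unpaired columns, and coincident rows are discarded at a cost of $O(n^7)$ terms. The result is mean $p^2y+O(n)$ and second moment $p^4y^2+O(n^3)$. So the variance bound $O(n^3)$ appears only after the leading terms cancel, and one Chebyshev bound finishes.

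\textbf{Your route.} You pass to the $\pm1$ model and observe that $\widetilde{\mathcal{G}}_i-(p+1)$ is dominated by $-\frac{p}{n}R_i$. This is a function of row $i$ alone, since $R_i=n-2\,\mathrm{wt}(\tilde{s}(i))$. The main term is therefore a sum of $p$ i.i.d.\ variables with mean exactly $p^2y$. Its variance comes from the single-row moment $\E R_i^4=3n^2-2n$. The remainder $E_i$ needs only a crude second-moment bound plus Cauchy--Schwarz.

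\textbf{What each route buys.} Yours is shorter and shows exactly where the code enters. Everything except $\E R_i^4$ uses only $d^\bot\ge3$, and in that case $\E R_i^4=3n^2-2n+24B_4(\mathcal{C})$. So $d^\bot\ge3$ together with $B_4(\mathcal{C})=O(n^2)$ already suffices, which directly covers the even-$m$ Gold codes the paper treats separately in Section~\ref{ns}. The same decomposition also gives $\tilde{\lambda}_1^{(1)}-(p+1)=-\frac{2}{n}\sum_jR_j$ plus an error with second moment $O(1/n)$. From this, Lemma~\ref{lem} would follow by a Lyapunov CLT instead of the moment method. In exchange, the paper's route stays inside the $X_{ijt}$/Lemma~\ref{X} framework that it reuses for Lemma~\ref{M}.

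\textbf{One small repair.} Your suggestion to replace $\ep$ by a slightly smaller value does not work. A smaller exponent $\ep'$ makes the cross-term bound $n^{2-\ep'}$ exceed $n^{2-\ep}$, while a larger one weakens the probability bound to $O(n^{-1+2\ep'})$. As written, your cross term is only bounded by $2\sqrt{2}\,y^{3/2}n^{2-\ep}$, which is not below $\frac{C}{3}n^{2-\ep}$ when $C$ is small.

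Instead, apply Markov to $\sum_iE_i^2$ at the threshold $\delta n^{2-2\ep}$, with $\delta=\delta(C,y)$ small. The probability is still $O(n^{-1+2\ep})$, with the constant multiplied by $1/\delta$. The cross term becomes at most $2\sqrt{2\delta}\,y^{3/2}n^{2-\ep}\le\frac{C}{3}n^{2-\ep}$.
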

We now proceed on our proof of Lemma \ref{lem3}.

By Lemma \ref{lem4}, (\ref{R}) and (\ref{G'}),
$$\|\mathbf{r}\|^2 \leq \frac{\|\widetilde{\mathcal{G}}_{\tilde{s}}\mathbf{r}-(p+1)\mathbf{r}\|^2}{(p+1-n^{1/2-\ep})^2} \leq \frac{\sum_{i=1}^p (\widetilde{\mathcal{G}}_i-(p+1))^2}{(p+1-n^{1/2-\ep})^2}=\frac{p^2y+O(n^{2-\ep})}{p^2+O(n^{3/2-\ep})} \leq C_0$$
with probability at least $1-O(n^{-1+2\ep})$, for some positive constant $C_0$.

That is, $\|\widetilde{\mathcal{G}}_{\tilde{s}}\mathbf{r}\| \leq C_0^{1/2}n^{1/2-\ep}$ with probability at least $1-O(n^{-1+2\ep})$, and
$$|\langle \mathbf{r},\widetilde{\mathcal{G}}_{\tilde{s}}\mathbf{r}\rangle| \leq \|\mathbf{r}\|\|\widetilde{\mathcal{G}}_{\tilde{s}}\mathbf{r}\| \leq C_0 n^{1/2-\ep}$$
with probability at least $1-O(n^{-1+2\ep})$.

Now we have
\begin{align*}
\sum_{i=1}^p \widetilde{\mathcal{G}}_i^2-\tilde{\lambda}_1\sum_{i=1}^p \widetilde{\mathcal{G}}_i&=\|\widetilde{\mathcal{G}}_{\tilde{s}}\mathbf{1}\|^2-\tilde{\lambda}_1\langle \mathbf{1},\widetilde{\mathcal{G}}_{\tilde{s}}\mathbf{1}\rangle\\
&=\|\tilde{\lambda}_1\mathbf{v}+\widetilde{\mathcal{G}}_{\tilde{s}}\mathbf{r}\|^2-\tilde{\lambda}_1\langle\mathbf{v}+\mathbf{r},\tilde{\lambda}_1\mathbf{v}+\widetilde{\mathcal{G}}_{\tilde{s}}\mathbf{r}\rangle\\
&=\tilde{\lambda}_1^2\|\mathbf{v}\|^2+\|\widetilde{\mathcal{G}}_{\tilde{s}}\mathbf{r}\|^2-\tilde{\lambda}_1(\tilde{\lambda}_1\|\mathbf{v}\|^2+\langle \mathbf{r},\widetilde{\mathcal{G}}_{\tilde{s}}\mathbf{r}\rangle)\\
&=\|\widetilde{\mathcal{G}}_{\tilde{s}}\mathbf{r}\|^2-\tilde{\lambda}_1\langle \mathbf{r},\widetilde{\mathcal{G}}_{\tilde{s}}\mathbf{r}\rangle.
\end{align*}
Hence
\begin{align*}
|\tilde{\lambda}_1^{(2)}-\tilde{\lambda}_1|&=\left|\frac{\sum_{i=1}^p \widetilde{\mathcal{G}}_i^2-\tilde{\lambda}_1\sum_{i=1}^p \widetilde{\mathcal{G}}_i}{\sum_{i=1}^p \widetilde{\mathcal{G}}_i}\right|\\
&=\frac{1}{p}\left|\frac{\|\widetilde{\mathcal{G}}_{\tilde{s}}\mathbf{r}\|^2-\tilde{\lambda}_1\langle \mathbf{r},\widetilde{\mathcal{G}}_{\tilde{s}}\mathbf{r}\rangle}{\tilde{\lambda}_1^{(1)}}\right|.
\end{align*}
Note that $\tilde{\lambda}_1\leq \sum_{i=1}^p \tilde{\lambda}_i = \mathrm{Tr}(\widetilde{\mathcal{G}}_{\tilde{s}})=\frac{4}{n}\sum_{i=1}^p\|\tilde{s}(i)\|^2 \leq 4p$. 

Hence
\begin{align*}
    \P(|\tilde{\lambda}_1^{(2)}-\tilde{\lambda}_1| > n^{-\ep},\widetilde{\Omega}_p) &\leq \P(|\|\widetilde{\mathcal{G}}_{\tilde{s}}\mathbf{r}\|^2-\tilde{\lambda}_1\langle \mathbf{r},\widetilde{\mathcal{G}}_{\tilde{s}}\mathbf{r}\rangle| > n^{-\ep}p(p+1-n^{1/2-\ep}),\widetilde{\Omega}_p)\\
    &+\P(\tilde{\lambda}_1^{(1)} < p+1-n^{1/2-\ep},\widetilde{\Omega}_p)\\
    &\leq\P\left(\|\widetilde{\mathcal{G}}_{\tilde{s}}\mathbf{r}\|^2 > \frac{1}{2}n^{-\ep}p(p+1-n^{1/2-\ep}),\widetilde{\Omega}_p\right)\\
    &+\P\left(|\langle \mathbf{r},\widetilde{\mathcal{G}}_{\tilde{s}}\mathbf{r}\rangle| > \frac{1}{8}n^{-\ep}(p+1-n^{1/2-\ep}),\widetilde{\Omega}_p\right)+\P(\tilde{\lambda}_1^{(1)} < p+1-n^{1/2-\ep},\widetilde{\Omega}_p)\\
    &\leq \P(\|\widetilde{\mathcal{G}}_{\tilde{s}}\mathbf{r}\| > Cn^{1-\ep/2},\widetilde{\Omega}_p)+\P(|\langle \mathbf{r},\widetilde{\mathcal{G}}_{\tilde{s}}\mathbf{r}\rangle| > Cn^{1-\ep},\widetilde{\Omega}_p)\\
    &+\P(|\tilde{\lambda}_1^{(1)}-(p+1)| > n^{1/2-\ep},\widetilde{\Omega}_p)\\
    &=O(n^{-1+2\ep})
\end{align*}
as desired.

This completes the proof of Lemma \ref{lem3}.

\section{Proof of Lemma \ref{lem2}}\label{plem2}
The next statement to prove is Lemma \ref{lem2}, under the assumption of Lemma \ref{lem}. Another core crucial result is Lemma \ref{lem4} as stated in previous section. Before that, we need to first investigate the statistical behaviour for the entries of $\widetilde{\Phi}_{\tilde{s}}$. They are also useful to prove Lemma \ref{lem}, which will be provided in Section \ref{plem}.
\subsection{Entries of $\widetilde{\Phi}_{\tilde{s}}$}
In this section we derive two results related to entries of $\widetilde{\Phi}_{\tilde{s}}$ that we will frequently use in the proofs of Lemmas \ref{lem} and \ref{lem2}.

\begin{lemma}\label{LE}
Assume $d^\bot \geq 2$. For any $1 \leq i \leq p$, $$\E(\widetilde{\mathcal{G}}_i,\widetilde{\Omega}_p)=p+1.$$
Hence,
$$\E(\tilde{\lambda}_1^{(1)},\widetilde{\Omega}_p)=p+1.$$
\end{lemma}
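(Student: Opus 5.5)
The plan is to expand $\widetilde{\mathcal{G}}_i$ directly in terms of the entries of $\widetilde{\Phi}_{\tilde{s}}$ and then use the moment properties of a uniformly random codeword. Write $x_{jk}:=\tilde{s}(j)_k\in\{0,1\}$. Then
$$\widetilde{\mathcal{G}}_i=\sum_{j=1}^p(\widetilde{\mathcal{G}}_{\tilde{s}})_{ij}=\frac{4}{n}\sum_{j=1}^p\sum_{k=1}^n x_{ik}x_{jk}.$$
I will split the sum over $j$ into the diagonal term $j=i$ and the $p-1$ off-diagonal terms $j\neq i$, and compute the expectation of each separately.

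Everything rests on two facts about a single coordinate of a uniformly random codeword. Since $d^\bot\ge 2$, no coordinate is identically zero on $\mathcal{C}$. Otherwise the unit vector $e_k$ would lie in $\mathcal{C}^\bot$, giving a dual codeword of weight 1. Hence for each $k$ the map $c\mapsto c_k$ is a nontrivial linear functional on $\mathcal{C}$. It therefore takes the values $0$ and $1$ equally often, so $\E(x_{jk},\widetilde{\Omega}_p)=1/2$. Equivalently, $\E((-1)^{c_k})=0$, because $\sum_{c\in\mathcal{C}}(-1)^{\langle c,e_k\rangle}=0$ exactly when $e_k\notin\mathcal{C}^\bot$.

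For the diagonal term, $x_{ik}^2=x_{ik}$, so $\E(x_{ik}^2)=1/2$. Summing over $k$ gives
$$\E\Big(\frac{4}{n}\sum_k x_{ik}^2\Big)=\frac{4}{n}\cdot\frac{n}{2}=2.$$
For the off-diagonal terms, rows $i\ne j$ are independent, so $\E(x_{ik}x_{jk})=\frac14$. Each such $j$ therefore contributes $\frac{4}{n}\cdot\frac{n}{4}=1$, and the $p-1$ of them contribute $p-1$ in total.

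Adding the two parts gives $\E(\widetilde{\mathcal{G}}_i,\widetilde{\Omega}_p)=2+(p-1)=p+1$. Averaging over $i$ and using linearity gives $\E(\tilde{\lambda}_1^{(1)},\widetilde{\Omega}_p)=p+1$. There is no real obstacle here. The only point needing care is the justification of $\E(x_{jk})=1/2$ via $d^\bot\ge2$. Note that pairwise correlations within a row, which would require $d^\bot\ge3$, are never used, because only same-coordinate products appear.
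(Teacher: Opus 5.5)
Your proof is correct and follows the same route as the paper. You split the row sum into the diagonal term (mean $2$, using $x^2=x$ and $\E(x)=1/2$ from $d^\bot\ge 2$) and the $p-1$ off-diagonal terms (mean $1$ each by independence of distinct rows), then average over $i$; your explicit argument that $d^\bot\ge 2$ makes each coordinate functional nontrivial, hence balanced on $\mathcal{C}$, spells out a step the paper leaves implicit.
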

\begin{proof}
    We first have, for any $i \in [1\isep p]$,
    \begin{align*}
        \E(\widetilde{\mathcal{G}}_i,\widetilde{\Omega}_p)&=\frac{4}{n}\sum_{j=1}^p \E(\langle \tilde{s}(i),\tilde{s}(j)\rangle,\widetilde{\Omega}_p)\nonumber\\
        &=\frac{4}{n}\sum_{j=1}^p\sum_{t=1}^n \E(\tilde{s}(i)[t]\tilde{s}(j)[t],\widetilde{\Omega}_p)\nonumber\\
        &=\frac{4}{n}\sum_{t=1}^n\left(\E((\tilde{s}(i)[t])^2,\widetilde{\Omega}_p)+\sum_{j=1, j \neq i}^p \E(\tilde{s}(i)[t]\tilde{s}(j)[t],\widetilde{\Omega}_p)\right)\nonumber\\
        &=4\left(\frac{1}{2}+\frac{p-1}{4}\right)\nonumber\\
        &=p+1.
    \end{align*}
    
    The fourth step follows from the assumption $d^\bot \geq 2$ and the fact that distinct rows of $\tilde{\Phi}_{\tilde{s}}$ are (mutually) independent.
    
    The second statement then follows by averaging the above over $i \in [1\isep p]$.
\end{proof}

Before stating the other result, we first introduce a new random variable. For $i,j \in [1\isep p]$ and $t \in [1\isep n]$, denote
\begin{equation}\label{X0}
X_{ijt}:=\tilde{s}(i)[t]\tilde{s}(j)[t]-\E(\tilde{s}(i)[t]\tilde{s}(j)[t],\widetilde{\Omega}_p).
\end{equation}
Since $\tilde{s}(i)[t],\tilde{s}(j)[t] \in \{0,1\}$, it is clear that $|X_{ijt}| \leq 1$.
\begin{lemma}\label{X}
        Let $i,i_1,i_2,i_3,i_4,j,j_1,j_2,j_3,j_4 \in [1\isep p]$ and $t,t_1,t_2,t_3,t_4 \in [1\isep n]$. Also for $1 \leq q \leq 4$, denote $S_q:=\{i_q,j_q\}$. Then
        \begin{enumerate}
            \item $\E(X_{ijt},\widetilde{\Omega}_p)=0$;
            \item if $d^\bot \geq 3$, and $t_1 \neq t_2$ or $S_1 \cap S_2=\emptyset$, then $\E(X_{i_1j_1t_1}X_{i_2j_2t_2},\widetilde{\Omega}_p)=0$;
            \item if $d^\bot \geq 2, t_1=t_2, \sharp S_1=\sharp S_2=2$ and $\sharp(S_1 \cap S_2)=1$, then $\E(X_{i_1j_1t_1}X_{i_2j_2t_2},\widetilde{\Omega}_p)=\frac{1}{16}$;
            \item if $d^\bot \geq 5$, then $\E(X_{i_1j_1t_1}X_{i_2j_2t_2}X_{i_3j_3t_3}X_{i_4j_4t_4},\widetilde{\Omega}_p)=0$ unless $t_1,t_2,t_3,t_4$ come in pairs and there is no $q$ such that $S_q \cap S_{q'}=\emptyset$ for all $q' \neq q$.
        \end{enumerate}
    \end{lemma}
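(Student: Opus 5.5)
The whole lemma rests on one standard fact about linear codes: if $d^\bot \geq k+1$, then for a uniformly random codeword $c\in\mathcal{C}$ any $k$ coordinates $c[t_1],\dots,c[t_k]$ (at distinct positions) are i.i.d.\ uniform on $\{0,1\}$. This is the classical orthogonal-array property. To see it, note that $\E(\psi(\sum_{t\in T} c[t]))=0$ unless the indicator vector of $T$ lies in $\mathcal{C}^\bot$, and every nonzero vector of weight at most $k$ is excluded when $d^\bot\geq k+1$. The plan is to combine this with the independence of distinct rows $\tilde{s}(i)$ and reduce every expectation to a computation with independent fair bits.

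Part 1 is immediate from the definition (\ref{X0}). For Part 3 we have $t_1=t_2=t$. Write $S_1=\{a,b\}$ and $S_2=\{a,c\}$ with $a,b,c$ distinct. The three bits $\tilde{s}(a)[t],\tilde{s}(b)[t],\tilde{s}(c)[t]$ lie in distinct rows, so they are independent, and each is uniform because $d^\bot\geq 2$. Then $\E(x_ax_b x_a x_c)=\E(x_a)\E(x_b)\E(x_c)=1/8$, while each mean is $1/4$, giving covariance $1/8-1/16=1/16$.

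For Part 2 there are two cases.
\begin{itemize}
\item If $S_1\cap S_2=\emptyset$, the two products involve disjoint sets of rows. They are therefore independent, and the covariance vanishes.
\item If $t_1\neq t_2$, every row involved contributes at most two coordinates, at positions $t_1$ and $t_2$. Since $d^\bot\geq 3$, these are independent uniform bits. Hence the whole family of bits appearing in $X_{i_1j_1t_1}$ and $X_{i_2j_2t_2}$ is independent. The first product depends only on the coordinate-$t_1$ bits and the second only on the coordinate-$t_2$ bits, so they are independent and the covariance is $0$. This holds even when $i_1=j_1$ or when the row sets overlap.
\end{itemize}

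Part 4 uses $d^\bot\geq 5$, under which any row's bits at up to four distinct positions are independent and uniform. Since there are only four factors, each row meets at most four coordinates. Consequently all bits $\{\tilde{s}(r)[t_q]\}$ over all rows $r$ and distinct positions $t_q$ are jointly independent. Group the factors $X_{i_qj_qt_q}$ by position, and by connected components of row-sharing among the factors with the same position. Factors in different groups are functions of disjoint independent families, so the expectation factorizes.
\begin{itemize}
\item If some position $t_q$ appears exactly once, that factor is isolated and has mean zero, so the expectation vanishes. Thus the positions must "come in pairs", which we read as each position having multiplicity at least $2$ (multiplicity $\geq 2$ for each distinct value).
\item If some $q$ has $S_q\cap S_{q'}=\emptyset$ for all $q'\neq q$, then $X_{i_qj_qt_q}$ depends on rows used by no other factor. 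It is then independent of the product of the rest, and the expectation again vanishes.
\end{itemize}

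The main obstacle is making the independence bookkeeping rigorous in Part 4. The points needing care are: justifying joint independence across both rows and positions when a single row appears in several factors at up to four positions, which is exactly where $d^\bot\geq5$ enters; and handling degenerate cases $i_q=j_q$, where $X$ depends on a single bit. I would state the orthogonal-array fact as a preliminary claim and derive all four parts from it.
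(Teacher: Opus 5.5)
Your proposal is correct and follows essentially the paper's route. Both arguments use independence of distinct rows, the fact that $d^\bot \geq k+1$ makes any $k$ coordinates of a random codeword independent uniform bits, and then split off a mean-zero factor that is isolated either by its column or by its rows. Your up-front joint-independence and grouping bookkeeping, with the character-sum proof of the orthogonal-array property, is a slightly cleaner packaging of the same idea. It also covers degenerate cases such as $i_q=j_q$ uniformly, which the paper's expansion (\ref{X1}) in Statement 2 passes over.
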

\begin{proof}
    \begin{enumerate}
        \item This is trivial.
        \item Since distinct rows of $\widetilde{\Phi}_{\tilde{s}}$ are (mutually) independent, we can split the product $\E(X_{i_1j_1t_1}X_{i_2j_2t_2},\widetilde{\Omega}_p)$ according to the row indices. In particular, if $S_1 \cap S_2=\emptyset$, then the two factors $X_{i_1j_1t_1}$ and $X_{i_2j_2t_2}$ can be separated to take expectation, each of which gives 0 by Statement 1.

            If $S_1 \cap S_2 \neq \emptyset$ (without loss of generality, assume $i_1=i_2=i$) and $t_1 \neq t_2$, then 
            \begin{align}
            &\E(X_{ij_1t_1}X_{ij_2t_2},\widetilde{\Omega}_p)\nonumber\\
            &=\E(\tilde{s}(i)[t_1]\tilde{s}(j_1)[t_1]\tilde{s}(i)[t_2]\tilde{s}(j_2)[t_2],\widetilde{\Omega}_p)-\E(\tilde{s}(i)[t_1]\tilde{s}(j_1)[t_1],\widetilde{\Omega}_p)\E(\tilde{s}(i)[t_2]\tilde{s}(j_2)[t_2],\widetilde{\Omega}_p).
            \label{X1}
                \end{align}
            The factors associated with $i$-th row of the first and second terms of (\ref{X1}) are $\E(\tilde{s}(i)[t_1]\tilde{s}(i)[t_2],\widetilde{\Omega}_p)$ and $\E(\tilde{s}(i)[t_1],\widetilde{\Omega}_p)\E(\tilde{s}(i)[t_2],\widetilde{\Omega}_p)$ respectively. Since $d^\bot \geq 3$, distinct columns of $\widetilde{\Phi}_{\tilde{s}}$ are pairwise independent. Therefore these two terms are the same. If $j_1=j_2=j \neq i$, then the factors associated with the $j$-th row can be derived similarly. If $j_1 \neq j_2$, then the factors associated with the $j_1$-th (resp. $j_2$-th) rows (provided $j_1$ [resp. $j_2$] is not equal to $i$) of the two terms of (\ref{X1}) are the same. Hence the difference of the two terms of (\ref{X1}) is 0.

            \item Without loss of generality, we assume $i_1=i_2=i \neq j_1 \neq j_2$, and $t_1=t_2=t$.
            
            Since $d^\bot \geq 2$, we have $\E(\tilde{s}(i)[t],\widetilde{\Omega}_p)=\frac{1}{2}$ for all $i \in [1\isep p]$ and $t \in [1\isep n]$. Hence (\ref{X1}) becomes
            \begin{align*}
            &\E(X_{ij_1t}X_{ij_2t},\widetilde{\Omega}_p)\nonumber\\
            &=\E(\tilde{s}(i)[t]\tilde{s}(j_1)[t]\tilde{s}(j_2)[t],\widetilde{\Omega}_p)-\E(\tilde{s}(i)[t]\tilde{s}(j_1)[t],\widetilde{\Omega}_p)\E(\tilde{s}(i)[t]\tilde{s}(j_2)[t],\widetilde{\Omega}_p)\nonumber\\
            &=[\E(\tilde{s}(i)[t],\widetilde{\Omega}_p)-(\E(\tilde{s}(i)[t],\widetilde{\Omega}_p))^2]\E(\tilde{s}(j_1)[t],\widetilde{\Omega}_p)\E(\tilde{s}(j_2)[t],\widetilde{\Omega}_p)\\
            &=\left[\frac{1}{2}-\left(\frac{1}{2}\right)^2\right]\left(\frac{1}{2}\right)^2\\
            &=\frac{1}{16}.
                \end{align*}
        \item If there is some $q \in [1\isep 4]$ such that $S_q \cap S_{q'} = \emptyset$ for all $q' \neq q$, then the factor $X_{i_qj_qt_q}$ can be separated to take expectation, and yields 0 by Statement 1.
        
        Now assume $t_1,t_2,t_3,t_4$ do not come in pairs, that is, some column index (say $t_q$) appears exactly once. Since $d^\bot \geq 5$, the columns of $\widetilde{\Phi}_{\tilde{s}}$ are 4-wise independent. Thereby we can again separate the factor $X_{i_qj_qt_q}$ to take expectation, and yields 0 by Statement 1.
    \end{enumerate}
\end{proof}

\begin{proof}[Proof of Lemma \ref{lem4}]
    
    By Lemma \ref{LE},
    \begin{align*}
    \sum_{i=1}^p(\widetilde{\mathcal{G}}_i-(p+1))^2&=\sum_{i=1}^p (\widetilde{\mathcal{G}}_i-\E(\widetilde{\mathcal{G}}_i,\widetilde{\Omega}_p))^2\\
    &=\frac{16}{n^2}\sum_{i=1}^p\left(\sum_{j=1}^p\sum_{t=1}^n X_{ijt}\right)^2\\
    &=\frac{16}{n^2}\sum_{i,j_1,j_2=1}^p\sum_{t_1,t_2=1}^n X_{ij_1t_1}X_{ij_2t_2},
    \end{align*}
    where $X_{ijt}$ is defined as in (\ref{X0}).
    Then we have
    \begin{align}
        \E\left(\sum_{i=1}^p(\widetilde{\mathcal{G}}_i-(p+1))^2,\widetilde{\Omega}_p\right)&=\frac{16}{n^2}\sum_{i,j_1,j_2=1}^p\sum_{t_1,t_2=1}^n\E(X_{ij_1t_1}X_{ij_2t_2},\widetilde{\Omega}_p)\nonumber\\
        &=\frac{16}{n^2}\sum_{i,j_1,j_2=1}^p\sum_{t=1}^n\E(X_{ij_1t}X_{ij_2t},\widetilde{\Omega}_p)\nonumber\\
        &=\frac{16}{n}\Bigg(\sum_{1 \leq i \neq j_1 \neq j_2 \leq p}\E(X_{ij_1t}X_{ij_2t},\widetilde{\Omega}_p)+O(p^2)\Bigg)\nonumber\\
        &=\frac{p(p-1)(p-2)}{n}+O(n)\nonumber\\
        &=p^2y+O(n)\label{E},
    \end{align}
    where in the second and fourth steps we apply Statements 2 and 3 of Lemma \ref{X} respectively,
    and
    \begin{align*}
        &\E\left(\left(\sum_{i=1}^p(\widetilde{\mathcal{G}}_i-(p+1))^2\right)^2,\widetilde{\Omega}_p\right)\\
        &=\frac{256}{n^4}\sum_{i_1,i_2,j_1,j_2,j_3,j_4=1}^p\sum_{t_1,t_2,t_3,t_4=1}^n\E(X_{i_1j_1t_1}X_{i_1j_2t_2}X_{i_2j_3t_3}X_{i_2j_4t_4},\widetilde{\Omega}_p)\\
        &=\frac{256}{n^4}\sum_{i_1,i_2,j_1,j_2,j_3,j_4=1}^p\Bigg(\sum_{1 \leq t_1 \neq t_3 \leq n}\E(X_{i_1j_1t_1}X_{i_1j_2t_1}X_{i_2j_3t_3}X_{i_2j_4t_3},\widetilde{\Omega}_p)\\
        &+2\sum_{1 \leq t_1 \neq t_2 \leq n}\E(X_{i_1j_1t_1}X_{i_1j_2t_2}X_{i_2j_3t_1}X_{i_2j_4t_2},\widetilde{\Omega}_p)+O(n)\Bigg)\\
        &=\frac{256}{n^4}\Bigg(\sum_{1 \leq i_1 \neq i_2 \neq j_1 \neq j_2 \neq j_3 \neq j_4 \leq p}\Bigg(\sum_{1\leq t_1 \neq t_3 \leq n}\E(X_{i_1j_1t_1}X_{i_1j_2t_1}X_{i_2j_3t_3}X_{i_2j_4t_3},\widetilde{\Omega}_p)\\
        &+2\sum_{1 \leq t_1 \neq t_2 \leq n}\E(X_{i_1j_1t_1}X_{i_1j_2t_2}X_{i_2j_3t_1}X_{i_2j_4t_2},\widetilde{\Omega}_p)\Bigg)+O(n^7)\Bigg)\\
        &=\frac{256p(p-1)(p-2)(p-3)(p-4)(p-5)(n-1)}{n^3}\left(\frac{1}{16}\right)^2+O(n^3)\\
        &=p^4y^2+O(n^3).
    \end{align*}
    Here we apply Statement 4 of Lemma \ref{X} in the second step, and the fact that distinct rows of $\widetilde{\Phi}_{\tilde{s}}$ are (mutually) independent and Statements 2--3 of Lemma \ref{X} in the fourth step. Hence
    \begin{align*}
    \mathbb{V}\left(\sum_{i=1}^p(\widetilde{\mathcal{G}}_i-(p+1))^2,\widetilde{\Omega}_p\right)&=\E\left(\left(\sum_{i=1}^p(\widetilde{\mathcal{G}}_i-(p+1))^2\right)^2,\widetilde{\Omega}_p\right)-\left(\E\left(\sum_{i=1}^p(\widetilde{\mathcal{G}}_i-(p+1))^2,\widetilde{\Omega}_p\right)\right)^2\\
    &=(p^4y^2+O(n^3))-(p^2y+O(n))^2\\
    &=O(n^3).
    \end{align*}
    By Chebyshev's inequality, then we have
    \begin{align*}
    \P\left(\left|\sum_{i=1}^p(\widetilde{\mathcal{G}}_i-(p+1))^2-\E\left(\sum_{i=1}^p(\widetilde{\mathcal{G}}_i-(p+1))^2,\widetilde{\Omega}_p\right)\right| > \frac{C}{2}n^{2-\ep},\widetilde{\Omega}_p\right) &\leq \frac{\mathbb{V}\left(\sum_{i=1}^p(\widetilde{\mathcal{G}}_i-(p+1))^2,\widetilde{\Omega}_p\right)}{\left(\frac{C}{2}n^{2-\ep}\right)^2}\\
    &=O(n^{-1+2\ep}).
    \end{align*}
    Combining with (\ref{E}) gives the desired statement.
    \end{proof}
    We can proceed to prove Lemma \ref{lem2}. Again this part is similar to \cite[Proof of Lemma 2.2]{Cheng}. For the sake of completeness, we still provide the proof and rewrite to adapt to our setting.
\begin{proof}[Proof of Lemma \ref{lem2}]
    First we have
    \begin{align}
        \tilde{\lambda}_1^{(2)}-\tilde{\lambda}_1^{(1)}&=\frac{\sum_{i=1}^p \widetilde{\mathcal{G}}_i^2}{p\tilde{\lambda}_1^{(1)}}-\tilde{\lambda}_1^{(1)}\nonumber\\
        &=\frac{\sum_{i=1}^p (\widetilde{\mathcal{G}}_i-(p+1))^2}{p\tilde{\lambda}_1^{(1)}}+\frac{2(p+1)\tilde{\lambda}_1^{(1)}-(p+1)^2}{\tilde{\lambda}_1^{(1)}}-\tilde{\lambda}_1^{(1)}\nonumber\\
        &=\frac{\sum_{i=1}^p (\widetilde{\mathcal{G}}_i-(p+1))^2}{p\tilde{\lambda}_1^{(1)}}-\frac{(\tilde{\lambda}_1^{(1)}-(p+1))^2}{\tilde{\lambda}_1^{(1)}}.\label{L}
    \end{align}
    By Lemmas \ref{lem} and \ref{lem4},
    \begin{align*}
        &\P\left(\left|\frac{\sum_{i=1}^p (\widetilde{\mathcal{G}}_i-(p+1))^2}{p\tilde{\lambda}_1^{(1)}}-y\right| > \frac{1}{2}n^{-\ep},\widetilde{\Omega}_p\right)\\
        &\leq \P\left(\left|\sum_{i=1}^p (\widetilde{\mathcal{G}}_i-(p+1))^2-p\tilde{\lambda}_1^{(1)}y\right| > \frac{1}{2}n^{-\ep}p(p+1-n^{1/2-\ep}),\widetilde{\Omega}_p \right)\\
        &+\P(p\tilde{\lambda}_1^{(1)} < p(p+1-n^{1/2-\ep}),\widetilde{\Omega}_p)\\
        &\leq \P\left(\left|\sum_{i=1}^p (\widetilde{\mathcal{G}}_i-(p+1))^2-p^2y\right| > \frac{1}{4}n^{-\ep}p(p+1-n^{1/2-\ep}),\widetilde{\Omega}_p \right)\\
        &+\P\left(|\tilde{\lambda}_1^{(1)}-p|>\frac{1}{4}n^{-\ep}(p+1-n^{1/2-\ep}),\widetilde{\Omega}_p\right)+\P(\tilde{\lambda}_1^{(1)} < p+1-n^{1/2-\ep},\widetilde{\Omega}_p)\\
        &\leq \P\left(\left|\sum_{i=1}^p (\widetilde{\mathcal{G}}_i-(p+1))^2-p^2y\right| > Cn^{2-\ep},\widetilde{\Omega}_p \right)+\P(|\tilde{\lambda}_1^{(1)}-(p+1)| > Cn^{1-\ep},\widetilde{\Omega}_p)\\
        &+\P(|\tilde{\lambda}_1^{(1)}-(p+1)| > n^{1/2-\ep},\widetilde{\Omega}_p)\\
        &=O(n^{-1+2\ep})
    \end{align*}
    and
    \begin{align*}
        &\P\left(\frac{(\tilde{\lambda}_1^{(1)}-(p+1))^2}{\tilde{\lambda}_1^{(1)}} > \frac{1}{2}n^{-\ep},\widetilde{\Omega}_p\right)\\
        &\leq \P\left((\tilde{\lambda}_1^{(1)}-(p+1))^2 > \frac{1}{2}n^{-\ep}(p+1-n^{1/2-\ep}),\widetilde{\Omega}_p\right)+\P(\tilde{\lambda}_1^{(1)} < p+1-n^{1/2-\ep},\widetilde{\Omega}_p)\\
        &\leq \P((\tilde{\lambda}_1^{(1)}-(p+1))^2 > Cn^{1-\ep},\widetilde{\Omega}_p)+\P(|\tilde{\lambda}_1^{(1)}-(p+1)| > n^{1/2-\ep},\widetilde{\Omega}_p)\\
        &=O(n^{-1+2\ep}).
    \end{align*}
    Thereby Lemma \ref{lem2} follows from (\ref{L}).
\end{proof}
\section{Proof of Lemma \ref{lem}}\label{plem}
In this section we prove Lemma \ref{lem}. In \cite[Proof of Lemma 2.1]{Cheng} the authors computed the characteristic function of $\lambda_1^{(1)}$ to get the limiting distribution directly owing to the nice properties of independent Gaussian random variables. However since the entries in our case are dependent Bernoulli random variables, their method no longer works. Instead, we use the method of moments. Indeed the moment convergence theorem \cite[Section B.1]{Bai} applies, since the moments of the Gaussian distribution satisfy the Carleman's condition \cite[Lemma B.3]{Bai}. In particular, to prove Lemma \ref{lem}, it suffices to prove the following result:
\begin{lemma}\label{M}
    Assume $d^\bot \geq 5$. Then for all fixed positive integer $\l$,
    $$\E((\tilde{\lambda}_1^{(1)}-(p+1))^\l,\widetilde{\Omega}_p)=\begin{cases}
        O_\l(n^{-1}) &(\l \text{ is odd})\\
        (\l-1)!!(4y)^{\l/2}+O_\l(n^{-1}) &(\l \text{ is even}).
    \end{cases}$$
\end{lemma}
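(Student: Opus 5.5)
The plan is to first rewrite $\tilde{\lambda}_1^{(1)}$ in terms of column sums, which separates it into a dominant linear part and a small quadratic correction. Write $\varepsilon_{it}:=\psi(\tilde{s}(i)[t])=1-2\tilde{s}(i)[t]\in\{\pm1\}$, so that the $(i,t)$ entry of $\Phi_s$ is $\varepsilon_{it}$. Let $R_t:=\sum_{i}\tilde{s}(i)[t]$ be the $t$-th column sum, and put $Y_t:=p-2R_t=\sum_{i=1}^p\varepsilon_{it}$. Since $\sum_{i=1}^p\widetilde{\mathcal{G}}_i=\frac{4}{n}\sum_t R_t^2=\frac1n\sum_t(p-Y_t)^2$, we get
\begin{equation*}
\tilde{\lambda}_1^{(1)}-(p+1)=-\frac{2}{n}\sum_{t=1}^n Y_t+\frac{1}{np}\sum_{t=1}^n\left(Y_t^2-p\right)=:-U+V .
\end{equation*}
Here $\frac{2}{n}\sum_t Y_t=\frac{2}{n}\sum_i Z_i$ with $Z_i:=\sum_t\varepsilon_{it}$ the $i$-th row sum of $\Phi_s$. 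Also $V=\frac{1}{np}\sum_t\sum_{i\neq j}\varepsilon_{it}\varepsilon_{jt}$, because $\varepsilon_{it}^2=1$. The rows are i.i.d.\ uniform in $\psi(\mathcal{C})$, and $d^\bot\ge 3$ gives $\E(Z_i)=0$ and $\E(Z_i^2)=n$. Hence $U$ has variance $4p/n=4y$, and $U$ is expected to carry the Gaussian limit; since $\mathcal{N}(0,4y)$ is symmetric, the sign of $U$ plays no role. The correction $V$ should be negligible.

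\textbf{Step 1 (moments of $U$).} Expand $\E(U^\l)=(2/n)^\l\sum_{i_1,\dots,i_\l}\E(Z_{i_1}\cdots Z_{i_\l})$. Group the indices by the set of $k$ distinct rows, with multiplicities $m_1,\dots,m_k$ summing to $\l$. Independence of rows factors the expectation, and $\E Z_i=0$ kills every $m_j=1$. For the single-row moments I would use
\begin{equation*}
\E(Z^m)=\sharp\{(t_1,\dots,t_m):e_{t_1}+\cdots+e_{t_m}\in\mathcal{C}^\bot\}.
\end{equation*}
The tuples reducing to $0$ contribute $O_m(n^{\lfloor m/2\rfloor})$. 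A tuple reducing to a dual codeword of weight $w$, with $5\le w\le m$, contributes $A_w(\mathcal{C}^\bot)\,O(n^{(m-w)/2})=O(n^{w-2+(m-w)/2})$ by Lemma~\ref{W} with $r=2$; this is $\le O(n^{m-2})$. Since $d^\bot\ge5$, $\E Z^3=0$ and $\E Z^4=3n^2-2n$. So a block with $m_j=2$ contributes $p\cdot n/n^2\cdot 4=4y$, while a block with $m_j\ge3$ contributes $p\,O(n^{m_j-2})/n^{m_j}=O(n^{-1})$ (and $0$ for $m_j=3$). Therefore only perfect pairings survive, giving $(\l-1)!!(4y)^{\l/2}+O_\l(n^{-1})$ for even $\l$ and $O_\l(n^{-1})$ for odd $\l$.

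\textbf{Step 2 (mixed terms with $V$).} By the binomial theorem it remains to show $\E(U^{a}V^{b})=O_{a,b}(n^{-1})$ for every $b\ge1$. I would expand everything into the $\varepsilon_{it}$ and index the terms by graphs. A factor of $U$ is one entry $\varepsilon_{it}$ with weight $2/n$. A factor of $V$ is a pair $\varepsilon_{it}\varepsilon_{jt}$ with $i\ne j$ sharing a column, with weight $1/(np)$. The expectation factors over rows. Within each row, the set of occurring columns, counted mod $2$, must form a dual codeword; otherwise the term is zero. The main term is the case where every row uses each of its columns an even number of times. Each $V$-factor then needs its two row-entries matched, which forces extra coincidences among row or column indices. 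Counting free indices, a $V$-pair matched with another $V$-pair loses one power relative to a $U$-pair, so $\E V^2=O(1/n)$. Matching $V$-entries with $U$-entries loses even more.

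\textbf{Main obstacle.} The difficult point is obtaining $O(n^{-1})$ uniformly rather than the cheap $O(n^{-1/2})$ that Cauchy--Schwarz gives from $\E V^2=O(n^{-1})$. A single $V$-factor with an unmatched pair must be shown to cost a full $n^{-1}$. I would first try a parity argument: odd total degree in the $\varepsilon$'s forces either a dual codeword of weight $\ge5$ inside some row (costing $n^{-1}$ via Lemma~\ref{W}, as in Step 1) or an extra index coincidence. If that fails, I would fall back on a careful free-index count over the pairing graphs. The latter route is routine once set up, but it is combinatorially heavy.
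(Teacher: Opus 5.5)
Your reduction $\tilde{\lambda}_1^{(1)}-(p+1)=-U+V$ is correct. Step 1 is a complete proof that $\E(U^\l)$ equals $(\l-1)!!(4y)^{\l/2}+O_\l(n^{-1})$ for even $\l$ and $O_\l(n^{-1})$ for odd $\l$. Reading the Gaussian term off the i.i.d.\ row sums $Z_i$ is a cleaner way to isolate the main term than the paper's three-vertex path components.

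The gap is Step 2, which is where the content of the lemma lies. You need $\E(U^aV^b)=O_{a,b}(n^{-1})$ for every $b\ge1$, and this is asserted, not proved.
\begin{itemize}
\item You compute only $\E V^2$. Cauchy--Schwarz then gives merely $O(n^{-1/2})$ for $b=1$. For $b\ge2$ it would require bounds on $\E V^{2b}$ that you do not have: once $2b\ge5$ a single row can carry a dual codeword, so this is not a repeat of the $\E V^2$ computation.
\item The parity argument you propose cannot supply the missing factor. For instance $\E(U^2V)=8(p-1)/n^2\neq0$, although the total degree is even and no row occurs more than three times, so no dual codeword is involved. Its smallness is a pure index-counting effect.
\item The terms of the mixed expansion in which some row's column multiset reduces to a nonzero dual codeword of weight $\ge5$ are not treated at all.
\end{itemize}

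The missing ingredient is the connectivity count used in Section~\ref{pWg}. Partition the factor slots of a term into row classes, and join two classes when they share a $V$-factor. A connected component with $a_i$ $U$-factors and $b_i\ge1$ $V$-factors has weight of order $n^{-a_i-2b_i}$, at most $b_i+1$ rows, and $a_i+b_i$ column indices.
\begin{itemize}
\item If every row uses each of its columns an even number of times, each (row, column) cell is hit at least twice. So there are $c_i\le(a_i+2b_i)/2$ cells, at most $c_i$ rows, and at most $c_i-1$ distinct columns (the two entries of a $V$-factor are distinct cells in one column). This gives $O(n^{-1})$.
\item If some row carries a dual codeword of weight $w\ge5$, Lemma~\ref{W} leaves $O(n^{a_i+b_i-2})$ column choices. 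With at most $b_i+1$ rows this is again $O(n^{-1})$.
\item Components without $V$-factors are exactly your Step 1, and the expectation factorises over rows in different components.
\end{itemize}
This is the paper's $\mathfrak{T}_0/\mathfrak{T}_1$ estimate transplanted to your decomposition. With it, your route becomes a valid reorganisation of the paper's proof.

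Finally, for Lemma~\ref{lem} itself you already have enough. Step 1 gives $U\to^d\mathcal{N}(0,4y)$, and $\E V^2=O(n^{-1})$ gives $V\to0$ in probability, so Slutsky's lemma concludes without any mixed moments. Only the quantitative $O_\l(n^{-1})$ statement of Lemma~\ref{M} needs Step 2.
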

The rest of this section is then devoted to the proof of Lemma \ref{M}.

We first expand $\tilde{\lambda}_1^{(1)}$ as
\begin{equation}\label{L1}
\tilde{\lambda}_1^{(1)}=\frac{4}{pn}\sum_{i=1}^p\sum_{j=1}^p\langle \tilde{s}(i),\tilde{s}(j)\rangle=\frac{4}{pn}\sum_{i=1}^p\sum_{j=1}^p\sum_{t=1}^n \tilde{s}(i)[t]\tilde{s}(j)[t].
\end{equation}
\subsection{Expansion of $\E((\tilde{\lambda}_1^{(1)}-(p+1))^\l,\widetilde{\Omega}_p)$}
Define $\Gamma_{\l,p}$ be the set of mappings $\gamma \colon [1\isep \l] \to [1\isep p]$.

By (\ref{L1}) and Lemma \ref{LE}, we can expand $(\tilde{\lambda}_1^{(1)}-(p+1))^\l$ as
\begin{align*}
    (\tilde{\lambda}_1^{(1)}-(p+1))^\l&=(\tilde{\lambda}_1^{(1)}-\E(\tilde{\lambda}_1^{(1)},\widetilde{\Omega}_p))^\l\\
    &=\frac{4^\l}{(pn)^\l}\sum_{\gamma=(\gamma_1,\gamma_2) \in \Gamma_{\l,p}^2} \omega_\gamma(\tilde{s}),
\end{align*}
where
\begin{equation}\label{G0}
\omega_\gamma(\tilde{s}):=\sum_{1 \leq t_1,\cdots,t_\l \leq n}\prod_{u=1}^\l X_{\gamma_1(u),\gamma_2(u),t_u},
\end{equation}
and recall the definition of $X_{\gamma_1(u),\gamma_2(u),t_u}$ in (\ref{X0}).
Hence 
$$\E((\tilde{\lambda}_1^{(1)}-(p+1))^\l,\widetilde{\Omega}_p)=\frac{4^\l}{(pn)^\l}\sum_{\gamma \in \Gamma_{\l,p}^2}\E(\omega_\gamma(\tilde{s}),\widetilde{\Omega}_p).$$
Let $\Sigma_p$ be the group of permutations on the set $[1\isep p]$. Then $\Sigma_p$ acts on $\Gamma_{\l,p}^2$ naturally by the relation
$$\sigma.(\gamma_1,\gamma_2)=(\sigma \circ \gamma_1,\sigma \circ \gamma_2).$$
Let $[\gamma]$ be the orbit of $\gamma$ under the above action, that is,
$$[\gamma]=\{\sigma.\gamma: \sigma \in \Sigma_p\}.$$
We can rewrite
$$\E((\tilde{\lambda}_1^{(1)}-(p+1))^\l,\widetilde{\Omega}_p)=\frac{4^\l}{(pn)^\l}\sum_{\gamma \in \Gamma_{\l,p}^2/\Sigma_p}\sum_{\tau \in [\gamma]}\E(\omega_\tau(\tilde{s}),\widetilde{\Omega}_p).$$
Given fixed $\sigma \in \Sigma_p$, as $\tilde{s}$ runs over $\widetilde{\Omega}_p$, so is $\sigma \circ \tilde{s}$. Thereby
$$\E(\omega_{\sigma.\gamma}(\tilde{s}),\widetilde{\Omega}_p)=\E(\omega_\gamma(\tilde{s} \circ \sigma),\widetilde{\Omega}_p)=\E(\omega_\gamma(\tilde{s}),\widetilde{\Omega}_p).$$
Let
$$V_\gamma:=\gamma_1([1\isep \l]) \cup \gamma_2([1\isep \l]), \quad v_\gamma:=\sharp V_\gamma,$$
and define the probability space
$$\widetilde{\Omega}(V_\gamma):=\{\tilde{s}: V_\gamma \to \phi(\mathcal{C})\},$$
endowed with uniform probability. It can be readily seen that $\sharp[\gamma]=\frac{p!}{(p-v_\gamma)!}$ and
$$\E(\omega_\gamma(\tilde{s}),\widetilde{\Omega}_p)=\E(\omega_\gamma(\tilde{s}),\widetilde{\Omega}(V_\gamma)).$$
Summarizing all above, we obtain
\begin{equation}\label{G1}
    \E((\tilde{\lambda}_1^{(1)}-(p+1))^\l,\widetilde{\Omega}_p)=\frac{4^\l}{(pn)^\l}\sum_{\gamma \in \Gamma_{\l,p}^2/\Sigma_p}\frac{p!}{(p-v_\gamma)!}\E(\omega_\gamma(\tilde{s}),\widetilde{\Omega}(V_\gamma)).
\end{equation}
\subsection{Proof of Lemma \ref{M}}
By (\ref{G1}), to compute $\E((\tilde{\lambda}_1^{(1)}-(p+1))^\l,\widetilde{\Omega}_p)$ and hence finish the proof of Lemma \ref{M}, it suffices to estimate the term $W_\gamma:=\E(\omega_\gamma(\tilde{s}),\widetilde{\Omega}(V_\gamma))$ for all $\gamma \in \Gamma_{\l,p}^2/\Sigma_p$. Its computation involves technical combinatorial arguments, and is directly related to the properties of linear codes. To streamline the idea of proof, we leave the detailed analysis to Section \ref{pWg}. Instead we assume the following result and complete the proof of Lemma \ref{M}.
\begin{lemma}\label{Wg}
    Assume $d^\bot \geq 5.$ Then there exists a subset $\Gamma \subset \Gamma_{\l,p}^2/\Sigma_p$ with
    $$\sharp\Gamma=\begin{cases}
        0 &(\l \text{ odd})\\
        (\l-1)!!(4^{\l/2}) &(\l \text{ even})
    \end{cases}$$
    such that
    $$p^{v_\gamma} W_\gamma=\begin{cases}
    \left(\frac{1}{4}\right)^\l p^{3\l/2}n^{\l/2} &(\gamma \in \Gamma)\\
    O_\l(n^{2\l-1}) &(\text{otherwise})
\end{cases}.$$
\end{lemma}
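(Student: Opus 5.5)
We view each $\gamma=(\gamma_1,\gamma_2)$ as a multigraph $H_\gamma$ on the vertex set $V_\gamma$ with $\l$ labelled edges $e_u=\{\gamma_1(u),\gamma_2(u)\}$, where loops are allowed. Expanding $W_\gamma=\sum_{t_1,\dots,t_\l}\E(\prod_u X_{\gamma_1(u)\gamma_2(u)t_u},\widetilde{\Omega}(V_\gamma))$, the first step is to kill most terms by vanishing arguments in the spirit of Lemma \ref{X}. Rows are mutually independent, so the expectation factorizes over rows. For a fixed column pattern, any edge $u$ whose column $t_u$ is used by no other edge sharing a vertex with $e_u$ can be integrated out. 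This is because each row involves at most $4$ distinct columns in the dominant configurations, and $d^\bot\ge 5$ gives $4$-wise independence of columns. After integrating out, the factor is $\E(X_{\cdot\cdot t_u})=0$. For general patterns with more columns per row we only need crude counting, not vanishing.

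We then bound the count. Let $k$ be the number of distinct columns among $t_1,\dots,t_\l$. Each surviving term is bounded by $1$, so $|W_\gamma|\le n^{k}$ times the number of column partitions. It remains to show $p^{v_\gamma}n^{k}=O_\l(n^{2\l-1})$ except in the main configuration.

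The key structural claim is that every edge must share a vertex with another edge carrying the same column. By Lemma \ref{X}(2), a pair of $X$'s with the same column and overlapping rows is needed. So each column class of size $m$ spans a connected subgraph, which has at most $m+1$ vertices. This gives $v_\gamma\le \sum_{\text{classes}}(m_c+1)=\l+k$ with $k\le \l/2$ (since $m_c\ge2$), and hence $v_\gamma+k\le \l+2k\le 2\l$. Equality forces every class to have size exactly $2$ and to span a path of two edges on $3$ distinct vertices, with distinct classes vertex-disjoint. Then $v_\gamma=3\l/2$ and $k=\l/2$, which is possible only for even $\l$. In every other case $p^{v_\gamma}n^k=O_\l(n^{2\l-1})$. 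Summing over the $O_\l(1)$ column partitions, the rest of the sum over $t$ (terms with coinciding columns across classes) loses a factor $n$.

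The main configuration must be handled carefully, because vertex-disjointness across classes is not automatic. Two classes sharing a vertex reduce $v_\gamma$ by one and so are lower order. The set $\Gamma$ consists of the orbits in which the $\l$ edges are paired into $\l/2$ vertex-disjoint "cherries" $\{i,j_1\},\{i,j_2\}$ with $i,j_1,j_2$ distinct. Counting: there are $(\l-1)!!$ pairings of $[1\isep\l]$. Within a pair $\{u,u'\}$, the shared vertex can sit in position $\gamma_1(u)$ or $\gamma_2(u)$, and likewise for $u'$, giving $4$ orientations per pair. Since the orbit forgets the actual labels, $\sharp\Gamma=(\l-1)!!\,4^{\l/2}$. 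For $\gamma\in\Gamma$, Lemma \ref{X}(3) and row independence give $W_\gamma=\sum_{t\text{'s distinct per class}}(1/16)^{\l/2}=(1/4)^{\l}n^{\l/2}(1+O_\l(n^{-1}))$. Multiplying by $p^{3\l/2}$ yields the stated main term, and the $O(n^{-1})$ correction fits into the error.

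The main obstacle is the structural claim in the presence of dependence within rows. Columns coincide across different classes, and rows can carry up to $\l$ columns, so $d^\bot\ge5$ only guarantees $4$-wise independence. I would therefore prove the vanishing statement only where it is needed. In the regime $v_\gamma+k\ge 2\l$, a counting argument shows every row meets at most $4$ column indices, so the $4$-wise independence suffices. In all other regimes I would use the trivial bound $|X|\le1$ together with $v_\gamma+k\le 2\l-1$. Establishing $v_\gamma+k\le 2\l-1$ in those regimes, including loops $\gamma_1(u)=\gamma_2(u)$ and singleton-column edges that cannot be cancelled, is the delicate bookkeeping. I would handle it by induction on $\l$, removing a leaf edge of $H_\gamma$.
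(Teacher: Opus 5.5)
Your vanishing step (an edge whose column is not repeated at either endpoint integrates out to $\E(X)=0$) needs the coordinates of each row at the columns it meets to be independent. That means the corresponding generator-matrix columns must be linearly independent, and $d^\bot\ge5$ guarantees this only for at most $4$ columns. Your workaround claims two things: in the regime $v_\gamma+k\ge 2\l$ every row meets at most $4$ columns, and all other terms satisfy $v_\gamma+k\le 2\l-1$. Both fail, because without vanishing you do not even have $v_\gamma+k\le 2\l$.

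Take $\l=5$ and $\gamma$ a star with centre $a$ and leaves $b_1,\dots,b_5$. Give it distinct columns $t_1,\dots,t_5$ with $\sum_q\mathbf{e}_{t_q}\in\mathcal{C}^\bot$. Then $v_\gamma+k=11=2\l+1$, row $a$ meets five columns, and no column is repeated. Yet conditioning on row $a$ gives $W_{\gamma,\mathbf{t}}=(-\tfrac14)^5\,\E\big((-1)^{\sum_q\tilde{s}(a)[t_q]}\big)=-4^{-5}\neq0$. With only $|W_{\gamma,\mathbf{t}}|\le1$ and $n^k$ column choices, such terms contribute up to order $n^{2\l+1}$, far above the main term of order $n^{2\l}$.

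The missing idea is that these configurations are \emph{rare}, and this is where $d^\bot\ge5$ is genuinely used. Lemma \ref{W} applied to $\mathcal{C}^\bot$ (so $r\ge2$) gives $B_w(\mathcal{C})=O_w(n^{w-2})$. Hence, within a connected component of $G_\gamma$ with $\l_i$ edges, colourings in which some vertex's colour set is linearly dependent number only $O_{\l_i}(n^{\l_i-2})$. With $v_{\gamma^{(i)}}\le\l_i+1$ this gives $O(n^{2\l_i-1})$. For $d^\bot\in\{3,4\}$ one only gets $O(n^{w-1})$, which is why simplex and first-order Reed--Muller codes fail.

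To use $v_{\gamma^{(i)}}\le\l_i+1$ you must first factor $W_\gamma=\prod_iW_{\gamma^{(i)}}$ over connected components, by row independence. A global count $p^{v_\gamma}n^{k-2}$ is too weak: for the star above plus two isolated edges it gives $n^{15}>n^{2\l}$. On the complementary colourings (every vertex's colour set linearly independent, of any size) your vanishing argument is valid. There your $v+k$ count, or the paper's cruder $O(n^{\lfloor\l_i/2\rfloor})$, finishes the job; this is exactly the paper's $\mathfrak{T}_0/\mathfrak{T}_1$ split.

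Two smaller points:
\begin{itemize}
\item A column class need not span a connected subgraph, since two vertex-disjoint cherries can share a column. The bound $v_\gamma+k\le2\l$ survives if you count components of each class.
\item For $\gamma\in\Gamma$, the identity $W_\gamma=(n/16)^{\l/2}$ is exact. The cherries use disjoint rows, so terms with columns coinciding across cherries are neither excluded nor zero. The lemma's equality therefore holds with no $O(n^{-1})$ correction.
\end{itemize}
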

\begin{proof}[Proof of Lemma \ref{M}]
    By (\ref{G1}) and the fact that
$$p^v\left(1-\frac{v(v-1)}{p}\right) \leq \frac{p!}{(p-v)!} \leq p^v,$$
we have
\begin{align*}
\E((\tilde{\lambda}_1^{(1)}-(p+1))^\l,\widetilde{\Omega}_p)&=\frac{4^\l}{(pn)^\l}\sum_{\gamma \in \Gamma_{\l,p}^2/\Sigma_p}p^{v_\gamma}W_\gamma\left(1+O_{v_\gamma}\left(\frac{1}{n}\right)\right)\\
&=\frac{4^\l}{(pn)^\l}\left(\sum_{\gamma \in \Gamma}  \left(\frac{1}{4}\right)^\l p^{3\l/2}n^{\l/2}+O_\l(n^{2\l-1})\right)\\
&=y^{\l/2}\sharp\Gamma+O_\l\left(\frac{1}{n}\right)\\
&=\begin{cases}
    O_\l\left(\frac{1}{n}\right) &(\l \text{ odd})\\
    (\l-1)!!(4y)^{\l/2}+O_\l\left(\frac{1}{n}\right) &(\l \text{ even}).
\end{cases}
\end{align*}
This completes the proof of Lemma \ref{M}.
\end{proof}
\subsection{Analysis of $W_\gamma$}\label{pWg}
In this section we prove Lemma \ref{Wg}.

First, by (\ref{G0}), we have
$$W_\gamma=\sum_{\mathbf{t} \in [1\isep n]^\l}W_{\gamma,\mathbf{t}},$$
where
$$W_{\gamma,\mathbf{t}}:=\E\left(\prod_{u=1}^\l X_{\gamma_1(u),\gamma_2(u),t_u},\widetilde{\Omega}(V_\gamma)\right)$$
for $\mathbf{t}=(t_1,t_2,\cdots,t_\l) \in [1\isep n]^\l$.

It is clear that $|W_{\gamma,\mathbf{t}}| \leq 1$ for all $\mathbf{t} \in [1\isep n]^\l$.

We consider the (undirected) multi-graph $G_\gamma:=(V_\gamma,E_\gamma)$ with edge multi-set $E_\gamma$ defined by
$$E_\gamma:=\left[\overline{\gamma_1(u)\gamma_2(u)}: 1 \leq u \leq \l\right].$$
here if $\gamma_1(u)=\gamma_2(u)$, then that edge is a loop.

Decompose $G_\gamma$ into connected components. That is,
$$G_\gamma=\bigsqcup_{i=1}^\kappa G_{\gamma^{(i)}},$$
where
$$G_{\gamma^{(i)}}:=(V_{\gamma^{(i)}},E_{\gamma^{(i)}})$$
with
$$\gamma^{(i)}=(\gamma_1^{(i)},\gamma_2^{(i)}),\quad \gamma_j^{(i)}:=\gamma_j\bigg|_{\mathcal{U}_i}$$
for a certain partition $[1\isep \l]=\bigsqcup_{i=1}^\kappa \mathcal{U}_i$. Then
$$E_{\gamma^{(i)}}=\left[\overline{\gamma_1(u)\gamma_2(u)}: u \in \mathcal{U}_i\right].$$

Denote $\l_i:=\sharp \mathcal{U}_i$. Then $\gamma^{(i)}$ can be treated as a mapping in $\Gamma_{\l_i,p}^2/\Sigma_p$.

Since $G_{\gamma^{(i)}}$'s are connected, we must have
\begin{equation}\label{VE}
    v_{\gamma^{(i)}}:=\sharp V_{\gamma^{(i)}} \leq \sharp E_{\gamma^{(i)}}+1=\l_i+1
\end{equation}, where equality holds if and only if $G_{\gamma^{(i)}}$ is a tree.

Since distinct rows of $\widetilde{\Phi}_{\tilde{s}}$ are independent, we have
$$W_{\gamma,\mathbf{t}}=\prod_{i=1}^\kappa W_{\gamma^{(i)},\mathbf{t}^{(i)}},$$
where
$$\mathbf{t}^{(i)}:=(t_u)_{u \in \mathcal{U}_i}.$$
Then we also have
$$W_\gamma=\prod_{i=1}^\kappa W_{\gamma^{(i)}},$$
where
$$W_{\gamma^{(i)}}:=\sum_{\mathbf{t}^{(i)} \in [1\isep \l]^{\mathcal{U}_i}} W_{\gamma^{(i)},\mathbf{t}^{(i)}}.$$
We now consider $n$-colorings on $G_{\gamma}$ (and also $G_{\gamma^{(i)}}$) with regard to $\mathbf{t} \in [1\isep n]^\l$, where we color according to the value of $t_u$ corresponding to the edge $\overline{\gamma_1(u)\gamma_2(u)}$. Denote the resulting colored multi-graph by $G_{\gamma,\mathbf{t}}$ (resp. $G_{\gamma^{(i)},\mathbf{t}^{(i)}}$).

For any (non-empty) subset $T$ of $[1\isep n]$, we say that $T$ is an \emph{independent coloring set} if the columns of any generator matrix of $\mathcal{C}$ corresponding to $T$ are linearly independent, otherwise it is called a \emph{dependent coloring set}.

We now classify the colored (connected) multi-graphs $G_{\gamma^{(i)},\mathbf{t}^{(i)}}$ into two types. We say that $G_{\gamma^{(i)},\mathbf{t}^{(i)}}$ is of type $\mathfrak{T}_0$ if for all $v \in V_{\gamma^{(i)}}$, the set of color indices of edges adjacent to $v$ forms an independent coloring set, and of type $\mathfrak{T}_1$ otherwise.

For $j \in \{0,1\}$, denote
$$W_j^{(i)}:=\sum_{\substack{\mathbf{t}^{(i)} \in [1\isep n]^{\mathcal{U}_i} \\ G_{\gamma^{(i)},\mathbf{t}^{(i)}} \text{ type }\mathfrak{T}_j}} W_{\gamma^{(i)},\mathbf{t}^{(i)}}.$$
\begin{enumerate}
\item If $G_{\gamma^{(i)},\mathbf{t}^{(i)}}$ is of type $\mathfrak{T}_0$, then all the distinct random variables involved in $W_{\gamma^{(i)},\mathbf{t}^{(i)}}$ are mutually independent. In particular if there exists a color that is used on exactly one edge in $G_{\gamma^{(i)},\mathbf{t}^{(i)}}$, then the factor corresponding to this edge can be separated out to take expectation, which gives 0 by Statement 1 of Lemma \ref{X}. That is, we need only to count those graphs such that each involved color is used on at least two edges. This implies immediately that
$$W_0^{(i)}=O_{\l_i}(n^{\lfloor \l_i/2\rfloor}).$$
Moreover, if $\l_i=1$, then $W_0^{(i)}=0$.
\item Now assume $G_{\gamma^{(i)},\mathbf{t}^{(i)}}$ is of type $\mathfrak{T}_1$. Then there exists a vertex $v \in V_{\gamma^{(i)}}$ such that the set $T_v$ of color indices of edges adjacent to $v$ forms a dependent coloring set. That is, there exists a non-empty subset $T \subseteq T_v$ such that $\sum_{t \in T} \mathbf{e}_t \in \mathcal{C}^\bot$. If $\sharp T=w (\geq d^\bot \geq 5)$, then the number of such $T$'s is upper bounded by $O_w(n^{w-2})$ by Lemma \ref{W}, while the other $\sharp T_v-w$ positions can be assigned any value in $[1\isep n]$. Hence the number of such $T_v$'s is $O(n^{\sharp T_v-2})$. Together with the $t_u$'s associated with other vertices, we see that
$$W_1^{(i)}=O_{\l_i}(n^{\l_i-2}).$$
Moreover, if $\l_i < d^\bot$, then $W_1^{(i)}=0$.
\end{enumerate}
Adding together, we have
$$W_{\gamma^{(i)}}=\begin{cases}
    0 &(\l_i=1)\\
    O(n) &(\l_i=2)\\
    O_{\l_i}(n^{\l_i-2}) &(\l_i \geq 3)
\end{cases}.$$
Together with (\ref{VE}), we obtain
$$p^{v_{\gamma^{(i)}}}W_{\gamma^{(i)}}=\begin{cases}
    0 &(\l_i=1)\\
   O(n^{v_{\gamma^{(i)}}+1}) &(\l_i=2)\\
    O_{\l_i}(n^{v_{\gamma^{(i)}}+\l_i-2}) &(\l_i \geq 3)
\end{cases}=\begin{cases}
    0 &(\l_i=1)\\
   O(n^4) &(\l_i=2 \text{ and } v_{\gamma^{(i)}}=3)\\
    O_{\l_i}(n^{2\l_i-1}) &(\text{otherwise})
\end{cases}.$$
If $\l_i=2$ for $1 \leq i \leq \kappa$, then $\l=2\kappa$ is even, and $\kappa=\l/2$. Hence
$$p^{v_\gamma} W_\gamma=\prod_{i=1}^\kappa (p^{v_{\gamma^{(i)}}}W_{\gamma^{(i)}})=\begin{cases}
    0 &(\l_i=1 \text{ for some }i)\\
    O_\l(n^{2\l}) &(\l_i=2 \text{ and }v_{\gamma^{(i)}}=3 \text{ for all }i)\\
    O_\l(n^{2\l-1}) &(\text{otherwise})
\end{cases}.$$
Denote
$$\Gamma:=\{\gamma \in \Gamma_{\l,p}^2/\Sigma_p: \l_i=2 \text{ and }v_{\gamma^{(i)}}=3 \text{ for all }i\}.$$
It is clear from the definition that $\Gamma=\emptyset$ when $\l$ is odd.

Now assume $\l$ is even. We wish to compute $W_\gamma$ for $\gamma \in \Gamma$.

For $\gamma \in \Gamma$, all $\l/2$ connected components of $G_\gamma$ are isomorphic. We simply pick any of them, say $G_{\gamma^{(1)}}$. By assumption the two edges form a tree. Without loss of generality we assume these two edges correspond to $\mathcal{U}_1=\{1,2\}$. This means $\sharp S_1=\sharp S_2=2$ and $\sharp(S_1 \cap S_2)=1$, where $S_u=\{\gamma_1(u),\gamma_2(u)\}$ for $u \in \mathcal{U}_1$.

Therefore
$$W_{\gamma^{(1)},\mathbf{t}^{(1)}}=\E(X_{\gamma_1(1),\gamma_2(1),t_1}X_{\gamma_1(2),\gamma_2(2),t_2},\widetilde{\Omega}(V_\gamma))=\frac{1}{16}\delta_{t_1,t_2}$$
by Statements 2 and 3 of Lemma \ref{X}.
Hence
$$W_{\gamma^{(1)}}=\frac{1}{16}n,$$
which implies
$$W_\gamma=\left(\frac{1}{16}n\right)^{\l/2}=\left(\frac{1}{4}\right)^\l n^{\l/2}.$$
Since $v_{\gamma^{(i)}}=3$ for all $i$, we have $v_\gamma=3\l/2$. Thus $p^{v_\gamma}W_\gamma=\left(\frac{1}{4}\right)^\l p^{3\l/2}n^{\l/2}$.

It remains to count $\sharp \Gamma$. This can be separated into two steps as follows:
\begin{enumerate}
    \item We first determine the pairing of the $u$'s to form the connected components. This is precisely partitioning $\l$ to two-element subsets. It is well-known that there are $(\l-1)!!$ different such partitions.
    \item Now within each connected component, there are four ways to allocate the only vertex that belongs to both edges, namely $(\gamma_1(u_1),\gamma_1(u_2)),(\gamma_1(u_1),\gamma_2(u_2)),(\gamma_2(u_1),\gamma_1(u_2))$ and $(\gamma_2(u_1),\gamma_2(u_2))$. Overall this gives $4^{\l/2}$ ways to allocate all the vertices that belong to two edges.
\end{enumerate}
Combining both gives $\sharp \Gamma=(\l-1)!!(4^{\l/2})$.

This completes the proof of Lemma \ref{Wg}.
\begin{remark}\label{rm2}
    In the proof of Lemma \ref{Wg}, we see that the key condition needed is $W_1^{(i)}=O_{\l_i}(n^{\l_i-2})$ (or actually more generally, $o_{\l_i}(n^{\l_i-1})$, if we allow weakening the $O_\l(n^{-1})$ terms in Lemma \ref{M} to $o_\l(1)$, which is sufficient for Lemma \ref{lem} to hold), which holds as long as $B_w(\mathcal{C})=O_w(n^{w-2})$ (or more generally, $o_w(n^{w-1})$) for all fixed $w$ (note that this implies that $d^\bot \geq 2$ asymptotically). 
\end{remark}
\section{Numerical Simulations}\label{ns}
In order to illustrate the phenomena in Theorems \ref{mainthm} and \ref{mainthm2}, we have conducted a few numerical simulations through \textbf{MATLAB}.
\subsection{Full Binary Code $\mathbb{F}_2^n$}
The first set of experiments is based on the full binary code $\mathbb{F}_2^n$, that is, the truly random Bernoulli case.

For $(p,n) \in \{(100,200), (200,400), (400,800)\}$ (so $y=0.5$ in each case), we first generate one instant of $p \times n$ random $\{0,1\}$-matrix $\widetilde{\Phi}_{\tilde{s}}$. Then we form the matrix $\widetilde{\mathcal{G}}_{\tilde{s}}$ and generate its set of eigenvalues as a vector. We plot the cumulative distribution function (CDF) and probability density function (PDF, in form of a histogram) of the ESD (which is essentially the distribution of the data in the eigenvalue vector), restricting the range to $[0,b+2]$ and $[0,b+0.5]$ respectively (this will definitely exclude the largest eigenvalue by Theorem \ref{mainthm2}). As seen from Figure \ref{fig1}, the CDF and PDF of the ESD become more indistinguishable from the MP CDF as $n$ increases from 200 to 800.

To illustrate Theorem \ref{mainthm2}, we also generate 3000 instants of the matrix $\widetilde{\mathcal{G}}_{\tilde{s}}$ for the same values of $(p,n)$. We pick the largest eigenvalue $\tilde{\lambda}_1$ and normalize it as
\begin{equation}\label{NL}
\tilde{\Lambda}_1:=\frac{\tilde{\lambda}_1-(p+1+y)}{2\sqrt{y}}.
\end{equation}
Then we plot the distribution of the 3000 instants of this variable as a histogram. As seen from Figure \ref{fig2}, the histogram nearly fills well with the region enclosed by the standard Gaussian PDF and the $x$-axis in each case.
\begin{figure}[htb!]
\includegraphics[angle=0,width=0.5 \textwidth,height=0.2 \textheight]{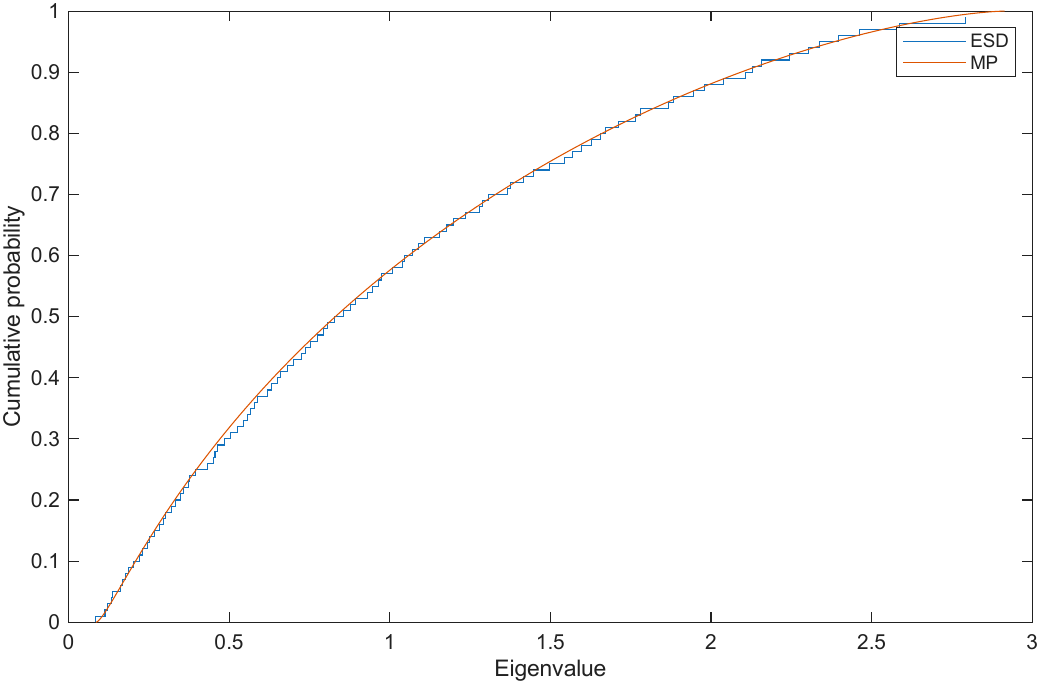}
\includegraphics[angle=0,width=0.5 \textwidth,height=0.2 \textheight]{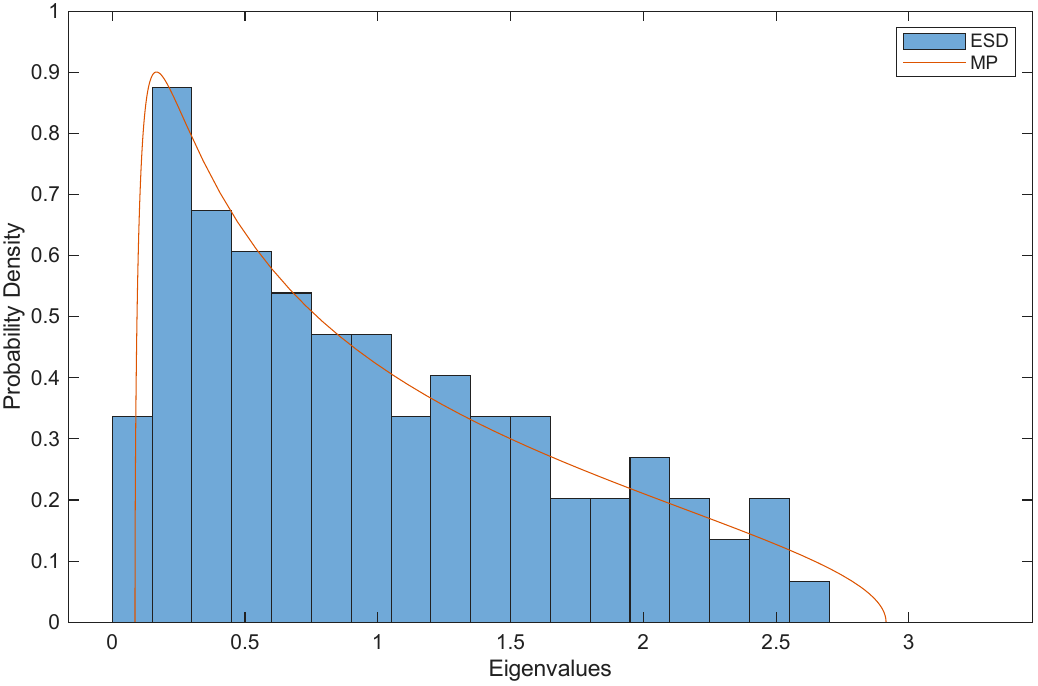}
\newline
\includegraphics[angle=0,width=0.5 \textwidth,height=0.2 \textheight]{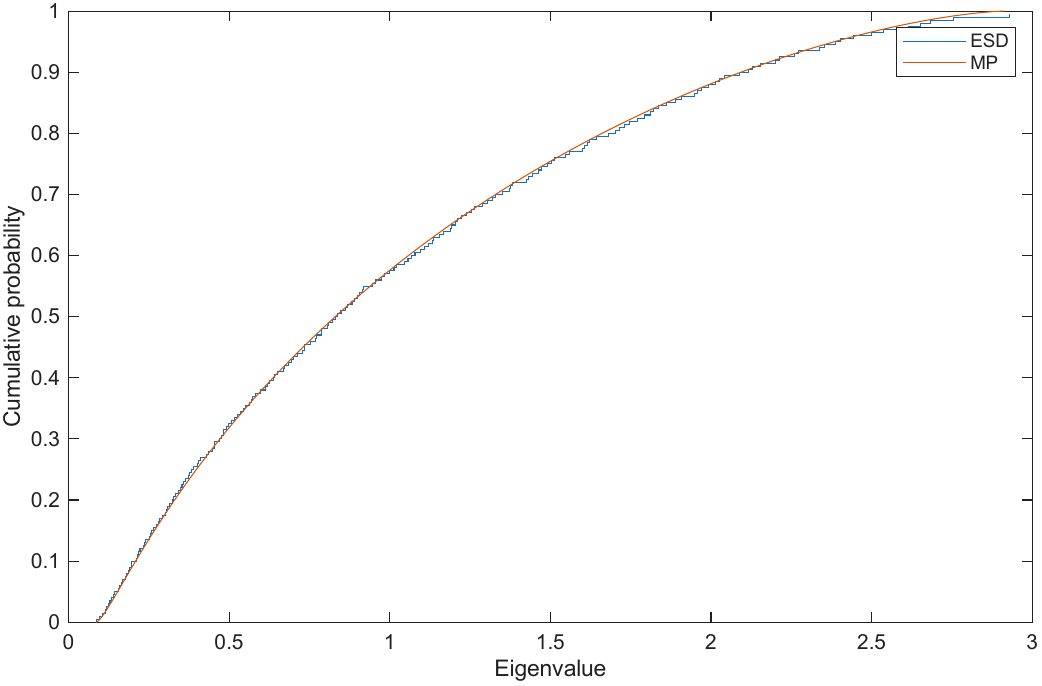}
\includegraphics[angle=0,width=0.5 \textwidth,height=0.2 \textheight]{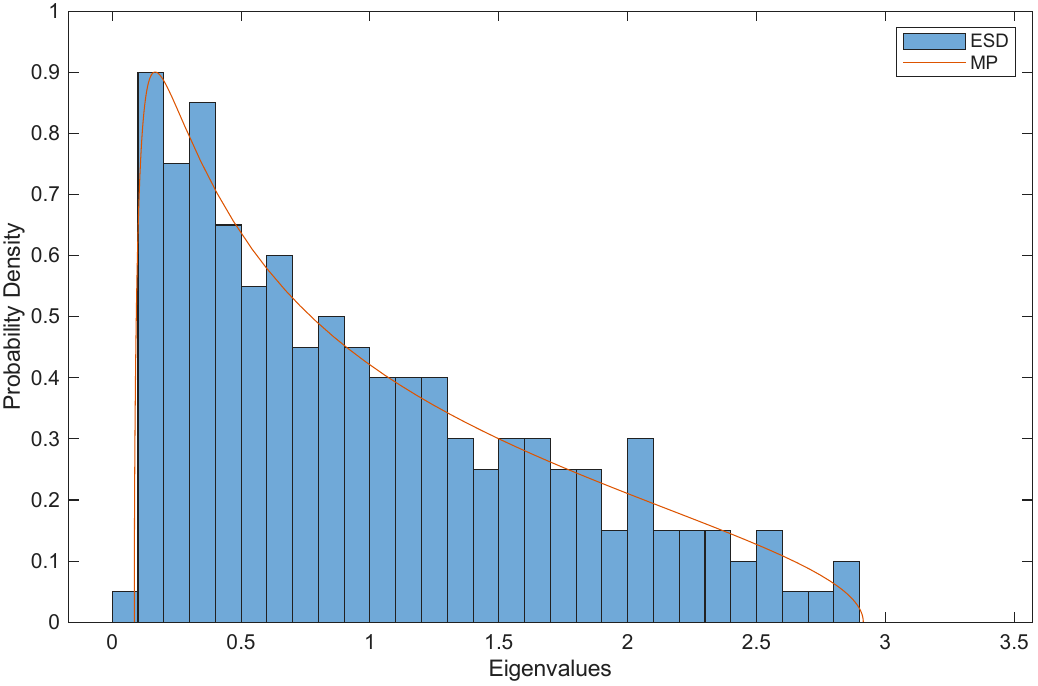}
\newline
\includegraphics[angle=0,width=0.5 \textwidth,height=0.2 \textheight]{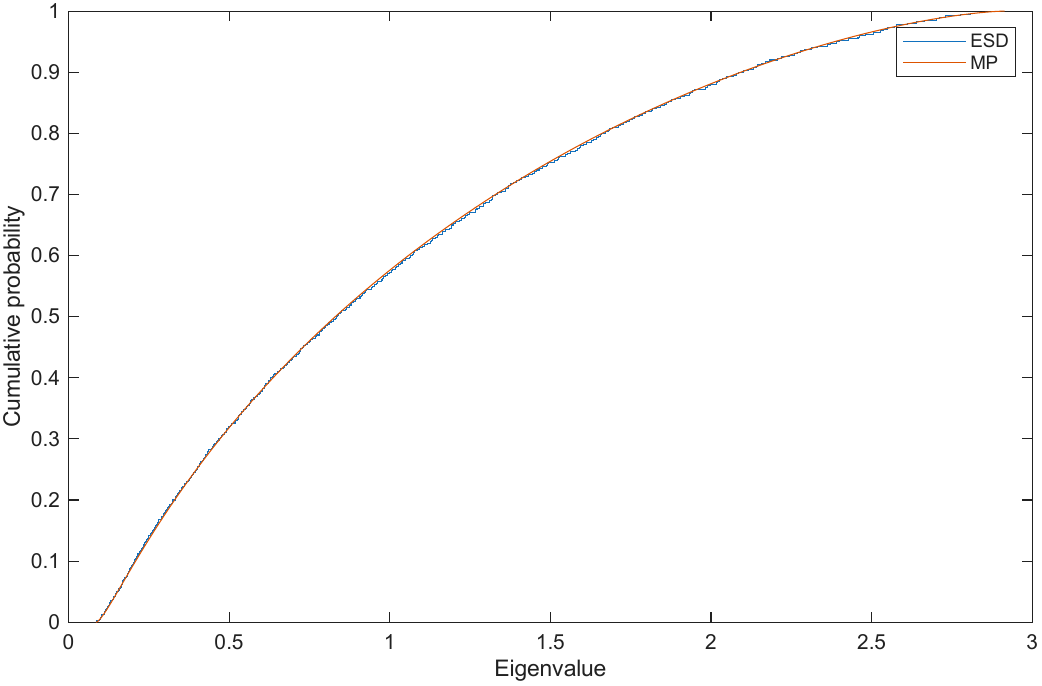}
\includegraphics[angle=0,width=0.5 \textwidth,height=0.2 \textheight]{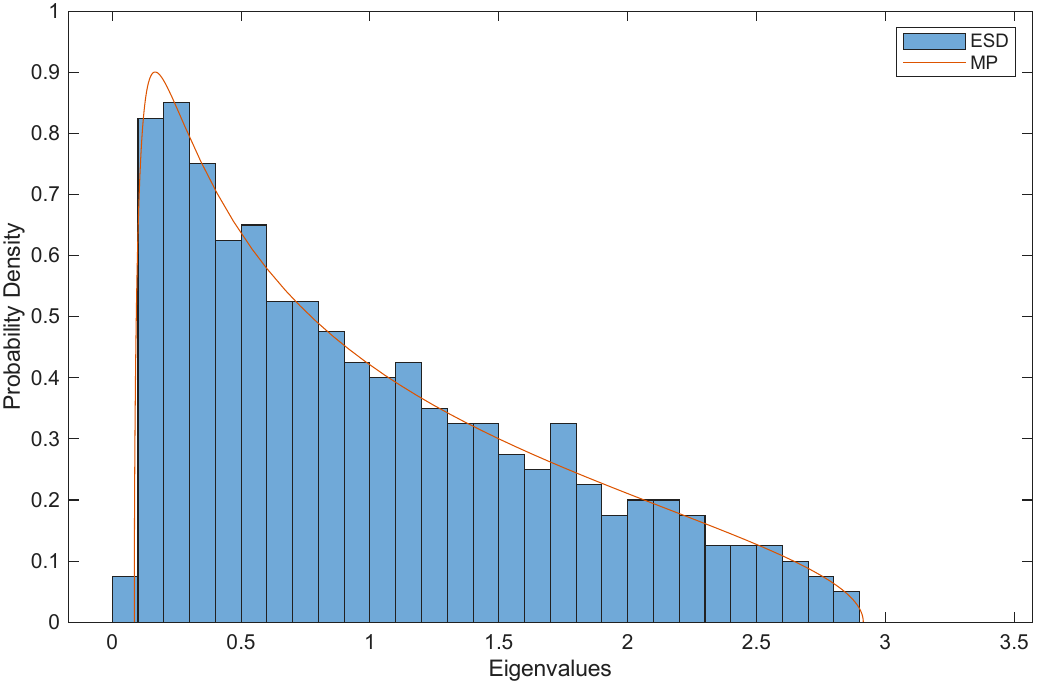}
\caption{ESD (CDF and PDF) of $\widetilde{\mathcal{G}}_{\tilde{s}}$ based on full binary codes with $(p,n)=(100,200), (200,400)$ and $(400,800)$.}\label{fig1}
\end{figure}
\begin{figure}[htb!]
\includegraphics[angle=0,width=0.32 \textwidth,height=0.2 \textheight]{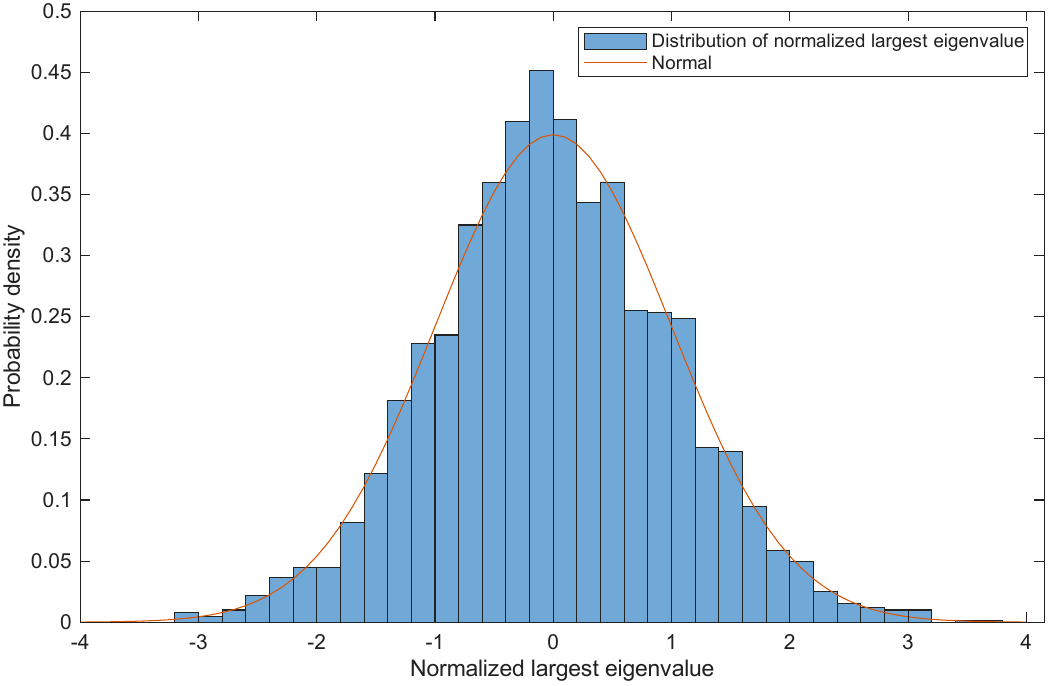}
\includegraphics[angle=0,width=0.32 \textwidth,height=0.2 \textheight]{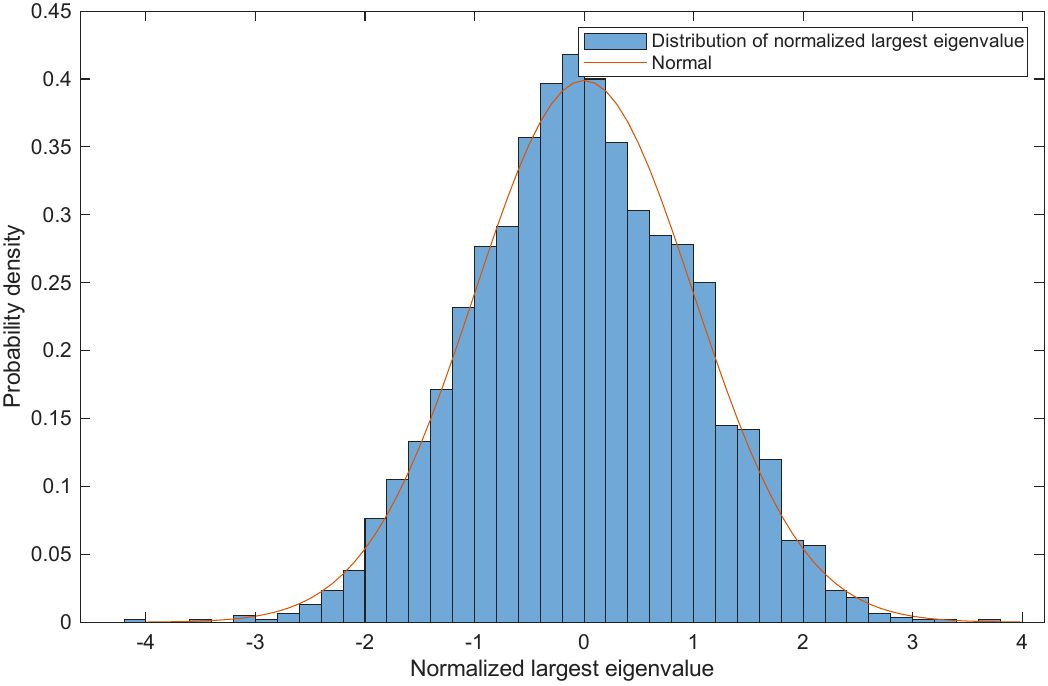}
\includegraphics[angle=0,width=0.32 \textwidth,height=0.2 \textheight]{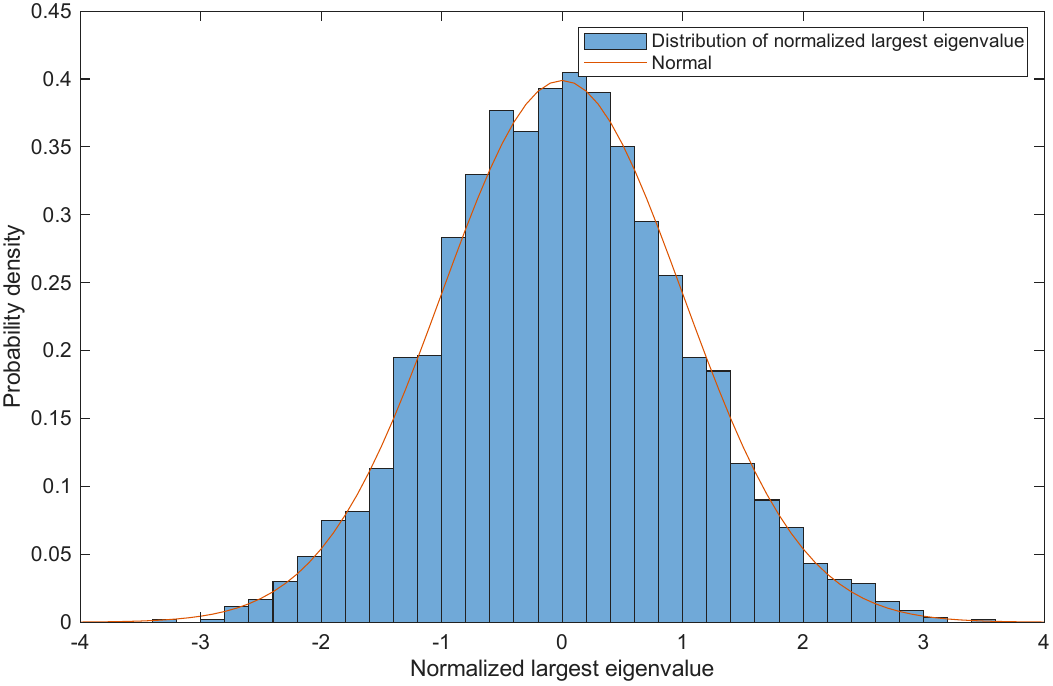}
\caption{PDF of normalized largest eigenvalue of 3000 instants of $\widetilde{\mathcal{G}}_{\tilde{s}}$ based on full binary codes with $(p,n)=(100,200), (200,400)$ and $(400,800)$.}\label{fig2}
\end{figure}

\subsection{Binary Gold Codes $\mathscr{G}_m$}
The next set of experiments is based on binary Gold codes $\mathscr{G}_m$.

A Gold sequence is the binary addition (or XOR) of two linear feedback shift register sequences (also known as pseudo-noise (PN) sequences or maximal recursive sequences (m-sequences)) defined by two distinct primitive polynomials $f_1(x)$ and $f_2(x)$ of the same degree $m$ \cite{Gold}. Let $\alpha \in \mathbb{F}_{2^m}^\times$ be a root of $f_1$, and $\delta$ be a positive integer such that $\alpha^\delta$ is a root of $f_2$. In \textbf{MATLAB}, a ``preferred pair'' of primitive polynomials $f_1$ and $f_2$ is defined whenever $4 \nmid m$, if $\delta=2^h+1$ or $2^{2h}-2^h+1$, for some positive integer $h$ with $\gcd(h,m)=1$ or 2 according to whether $m$ is odd or even. In fact here the number $2^h+1$ is called a \emph{Gold exponent} and the corresponding monomial $x^{2^h+1}$ is called a \emph{Gold function} (the another exponent $2^{2h}-2^h+1$ is called a \emph{Kasami exponent} and the corresponding monomial $x^{2^{2h}-2^h+1}$ is a \emph{Kasami function}). In the following we focus on the case that uses the Gold functions, which is also the construction made by Gold \cite{Gold} when he introduced this class of codes.

If $\mathbf{u}$ and $\mathbf{v}$ are PN sequences defined by $f_1$ and $f_2$ respectively, then the set of Gold sequences $\mathscr{G}_m(\mathbf{u},\mathbf{v})$ is defined as
$$\mathscr{G}_m(\mathbf{u},\mathbf{v}):=\{T^s\mathbf{u},T^s\mathbf{v},T^s\mathbf{u}\oplus T^t\mathbf{v}: s,t \in [0\isep 2^m-2]\}$$
where $T$ and $\oplus$ denote cyclic shift and binary addition (XOR) operations respectively. In particular $\mathscr{G}_m(\mathbf{u},\mathbf{v})$ consists of $2^{2m}-1$ binary sequences, with $2^m-1$ being PN sequences derived from $f_1$, $2^m-1$ being PN sequences derived from $f_2$, and $(2^m-1)^2$ being non-trivial Gold sequences. All these sequences possess nice cross-correlation properties. In particular, the world-renowned global positioning system (GPS) and the code division multiple access (CDMA) in mobile phones use Gold sequences with $m=10$ and 15 respectively.

Algebraically, we may write these Gold sequences as
$$\mathbf{c}_{u,v}^h:=(\mathrm{Tr}_1^m(ux+vx^{2^h+1}))_{x \in \mathbb{F}_{2^m}^\times}$$
for $u,v \in \mathbb{F}_{2^m}$, not both zero.

The set $\mathscr{G}_m:=\mathscr{G}_m(\mathbf{u},\mathbf{v}) \cup \{\mathbf{0}\}=\{\mathbf{c}_{u,v}^h: u,v \in \mathbb{F}_{2^m}\}$ forms a binary linear code with parameters $[n=2^m-1,2m]$, which is what we call the binary Gold code.

In our numerical simulation, we use binary Gold codes with $m=10,11,13,14$ for the ESD, and those with $m=5,6,7$ for the (normalized) largest eigenvalue. We use a preferred pair of binary primitive polynomials in \textbf{MATLAB} to generate $p=\lfloor n/2\rfloor=2^{m-1}-1$ Gold sequences (so $y \sim 0.5$ in each case). Then we construct one instant of $\widetilde{\mathcal{G}}_{\tilde{s}}$ for the ESD and 500 instants of the same matrix for the largest eigenvalue, and the rest is the same as for the full binary code case. As from Figure \ref{fig3}, we see that the CDF of the ESD nearly totally coincides with the CDF of the MP law, and the histogram plot of the ESD properly packs with the PDF of the MP law too. The histogram plots of the normalized largest eigenvalue $\tilde{\Lambda}_1$ in Figure \ref{fig4} also fit well with the PDF of the standard Gaussian law, even for just small values of $m$ and relatively small amount of instants.

A remarkable thing to note here is that, when $m$ is odd (which corresponds to the $m=5,7,11,13$ cases in the numerical simulation), the dual distance of $\mathscr{G}_m$ is exactly 5 (in fact, if we take $h=1$ in particular, then $\mathscr{G}_m$ is just the dual code of a primitive narrow-sense double-error-correcting BCH code), in which our Theorems \ref{mainthm} and \ref{mainthm2} are able to explain these phenomena. However, when $m$ is even (which combines with $4 \nmid m$ implying $m \equiv 2 \pmod{4}$---this corresponds to $m=6,10,14$ cases in the numerical simulation), it happens that the dual distance of $\mathscr{G}_m$ is 3 instead, and so the statements of Theorems \ref{mainthm} and \ref{mainthm2} are not covered. Nevertheless, we can still prove that $B_w(\mathscr{G}_m)=O_w(n^{w-2})$ for all fixed $w \geq 3$. The result for $w=3$ follows directly from \cite[Lemma 10]{Kasami}, while that for $w \geq 4$ follows by the fact that all nonzero weights of $\mathscr{G}_m$ being concentrated around $2^{m-1}+O(2^{m/2})$ (see \cite[Theorem 1]{Kasami}) and application of \cite[Theorem 7.2]{AGC}. This is good enough to make Lemma \ref{lem} still work as explained from Remark \ref{rm2} in Section \ref{plem3}. The results that remain to be verified for Theorem \ref{mainthm2} in this case are Lemmas \ref{lambda} and \ref{lem4}. For the former, we only need to show that the expected moments $\E(A_\l(s),\Omega_p)$ of $\mu_{\mathcal{G}_s}$ are $O_\l(1)$. This follows from  \cite[Theorem 3]{OQBT}, in which the error bound $E_\l$ is $O_\l(1)$ since $A=B_4(\mathscr{G}_m)=O(n^2)$. The latter can be resolved by noting that $\E(X_{i_1j_1t_1}X_{i_1j_2t_2}X_{i_2j_3t_3}X_{i_2j_4t_4},\widetilde{\Omega}_p)$ corresponds to codewords of weight 4 in the dual code $\mathscr{G}_m^\bot$ for $O(n^7)$ terms only (see the proof of Lemma \ref{Wg}). However the same problem for Theorem \ref{mainthm} remains open mathematically.
\begin{figure}[htb!]
\includegraphics[angle=0,width=0.5 \textwidth,height=0.2 \textheight]{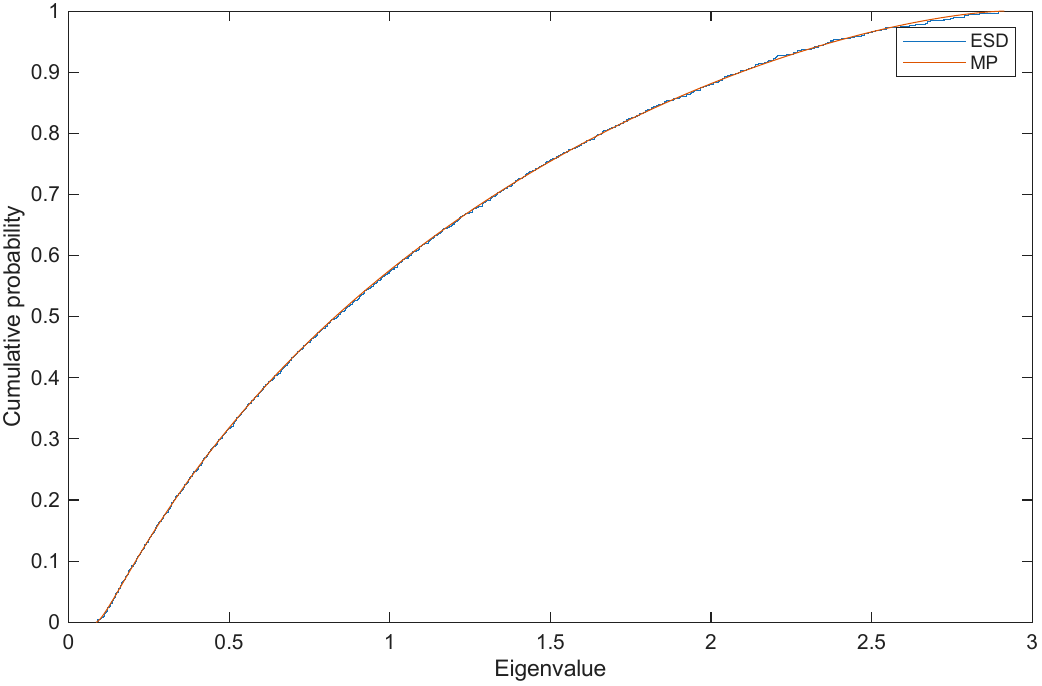}
\includegraphics[angle=0,width=0.5 \textwidth,height=0.2 \textheight]{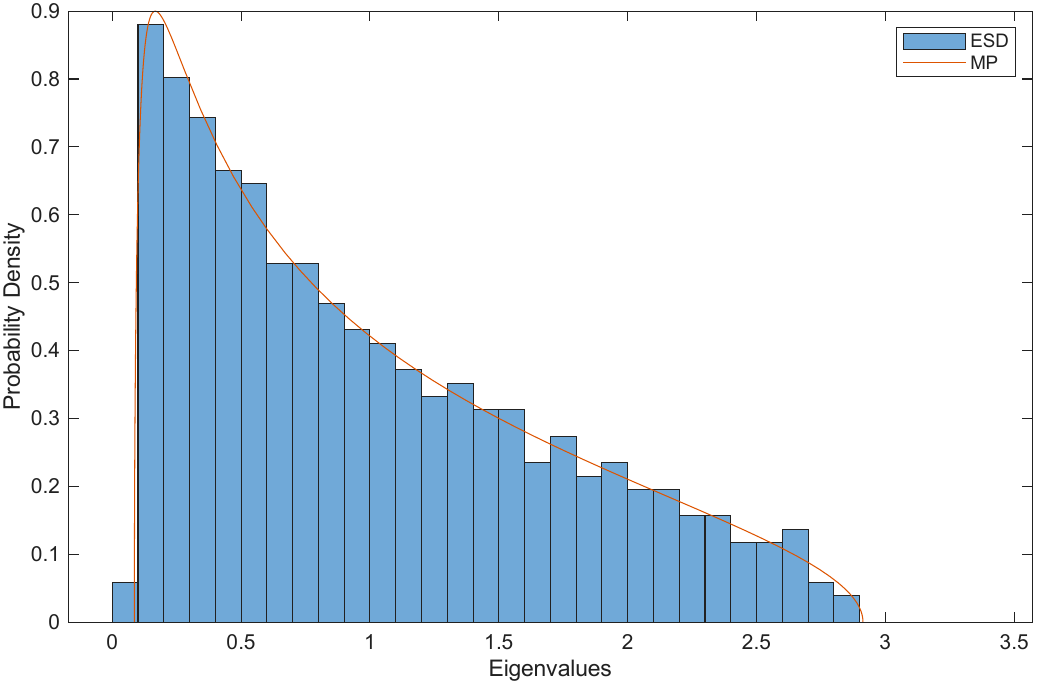}
\newline
\includegraphics[angle=0,width=0.5 \textwidth,height=0.2 \textheight]{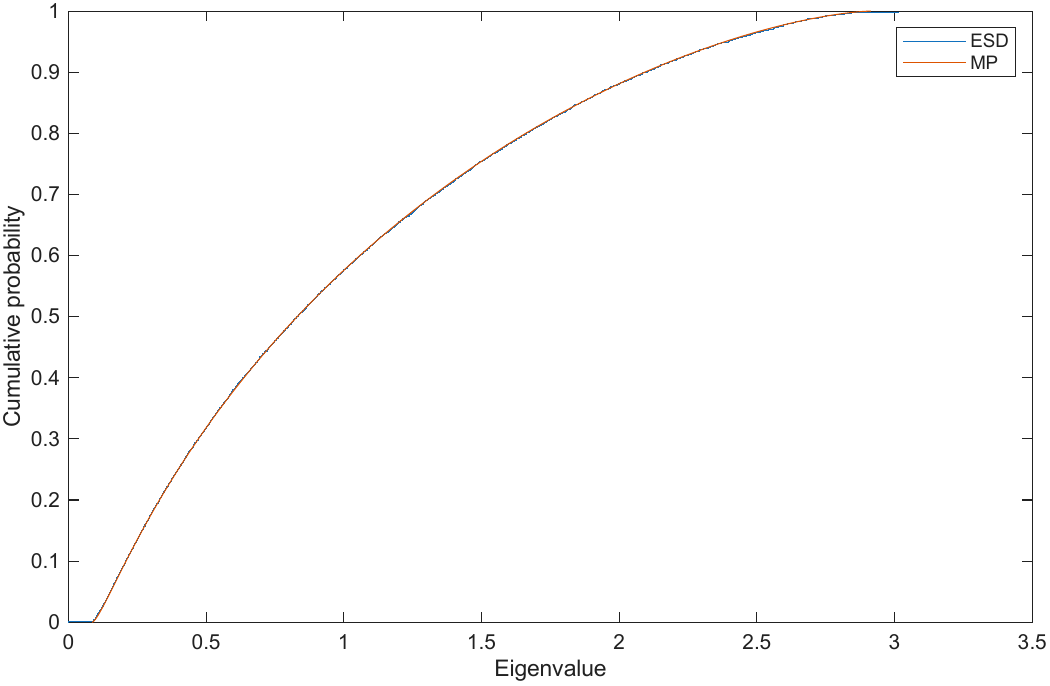}
\includegraphics[angle=0,width=0.5 \textwidth,height=0.2 \textheight]{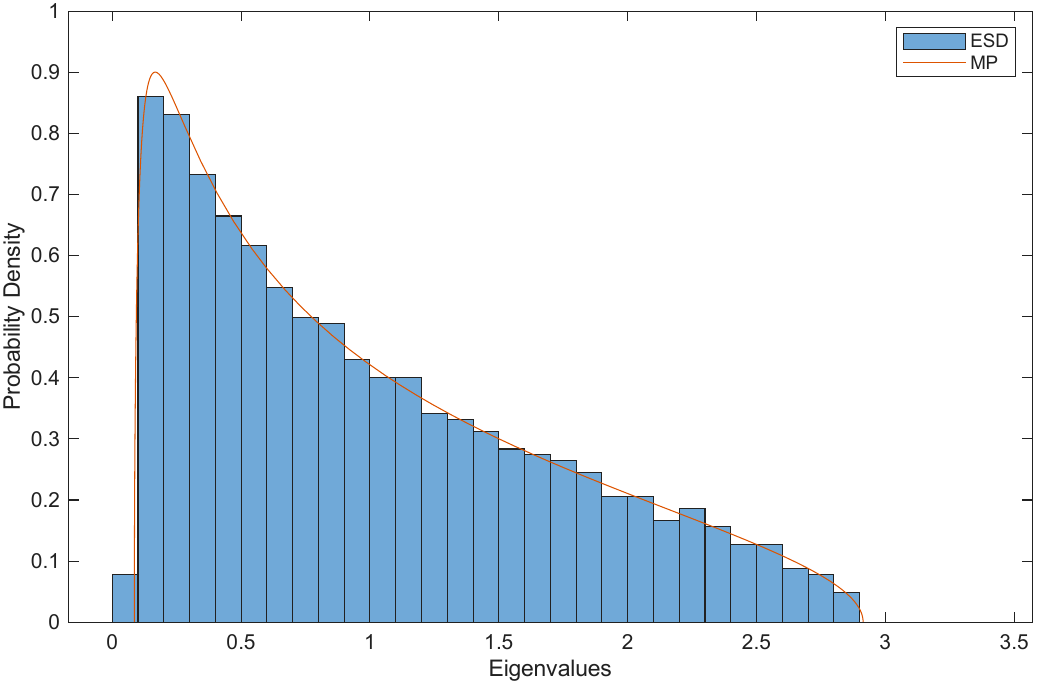}
\newline
\includegraphics[angle=0,width=0.5 \textwidth,height=0.2 \textheight]{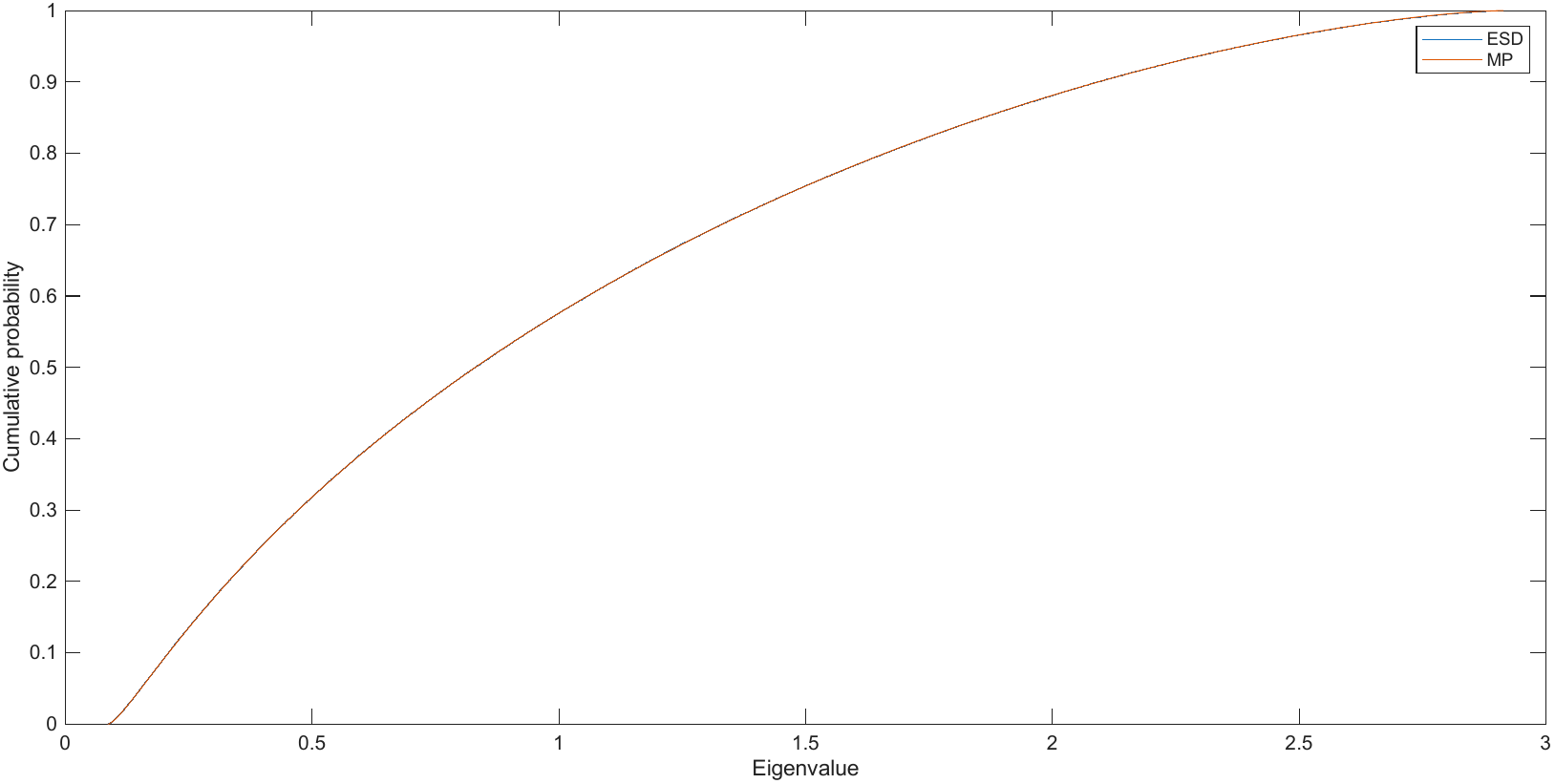}
\includegraphics[angle=0,width=0.5 \textwidth,height=0.2 \textheight]{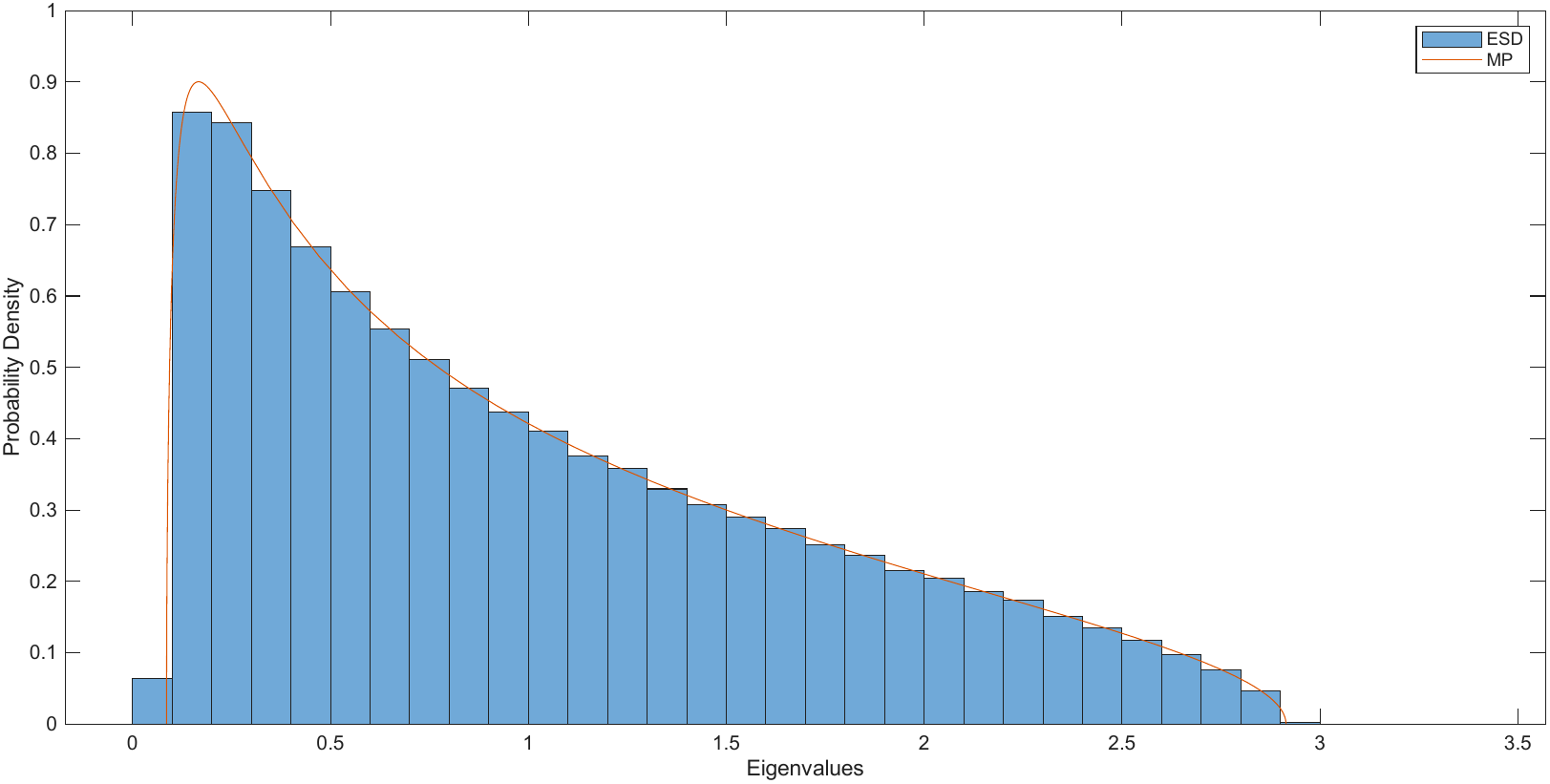}
\newline
\includegraphics[angle=0,width=0.5 \textwidth,height=0.2 \textheight]{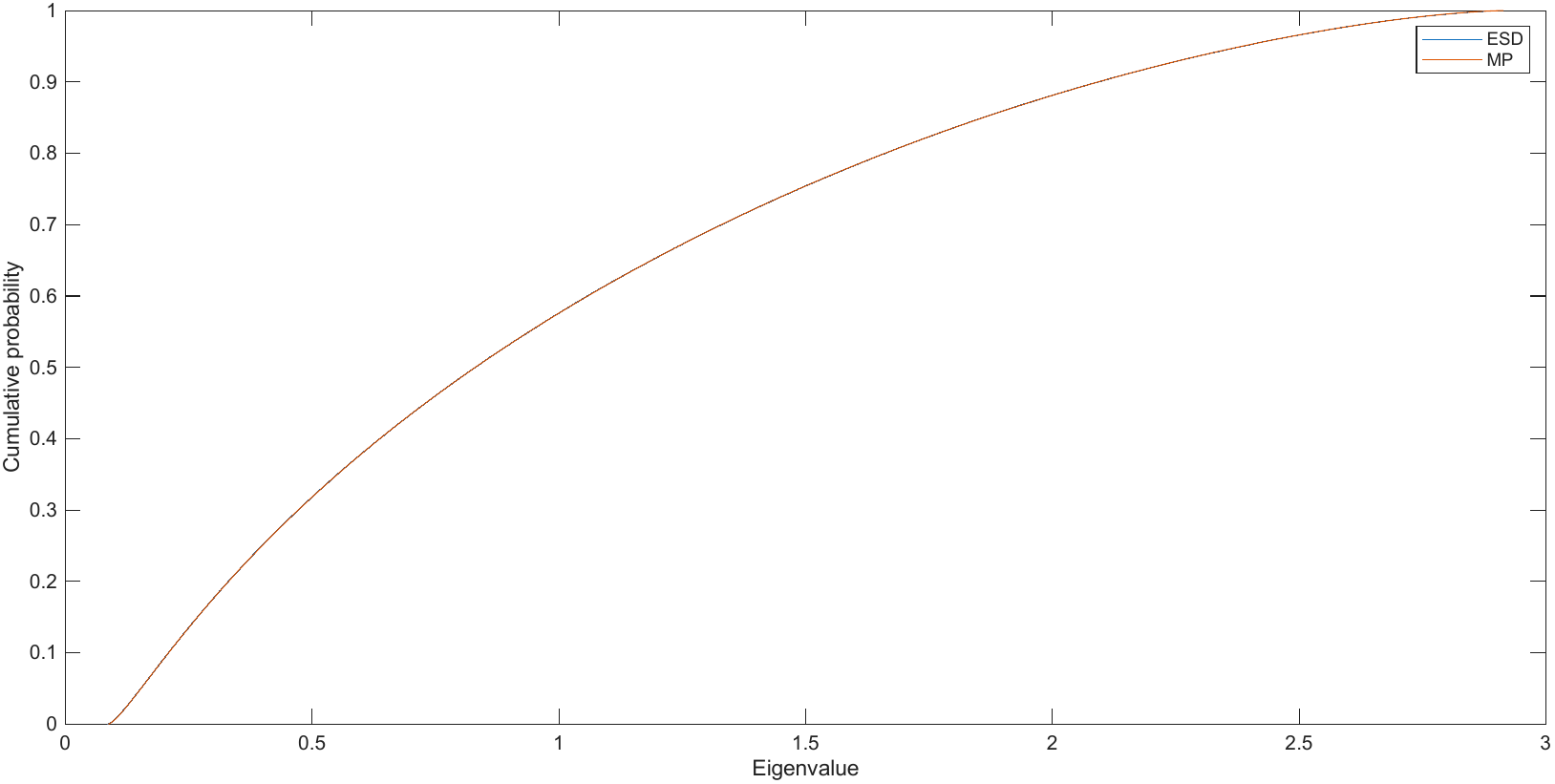}
\includegraphics[angle=0,width=0.5 \textwidth,height=0.2 \textheight]{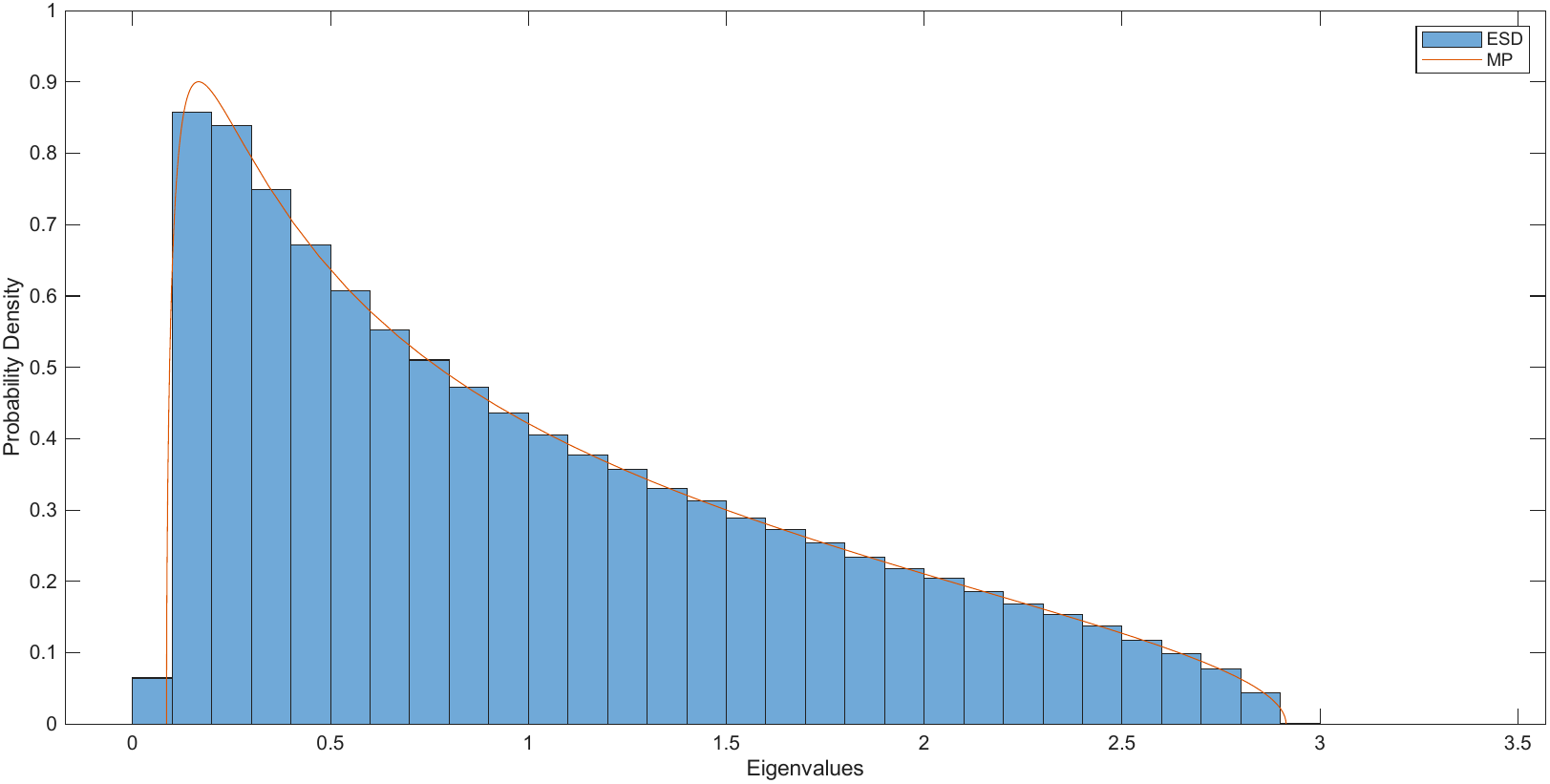}
\caption{ESD (CDF and PDF) of $\widetilde{\mathcal{G}}_{\tilde{s}}$ based on $[1023,20], [2047,22],[8191,26]$ and $[16383,28]$ binary Gold codes, with $(p,n)=(511,1023),(1023,2047),(4095,8191)$ and $(8191,16383)$.}\label{fig3}
\end{figure}
\begin{figure}[htb!]
\includegraphics[angle=0,width=0.32 \textwidth,height=0.2 \textheight]{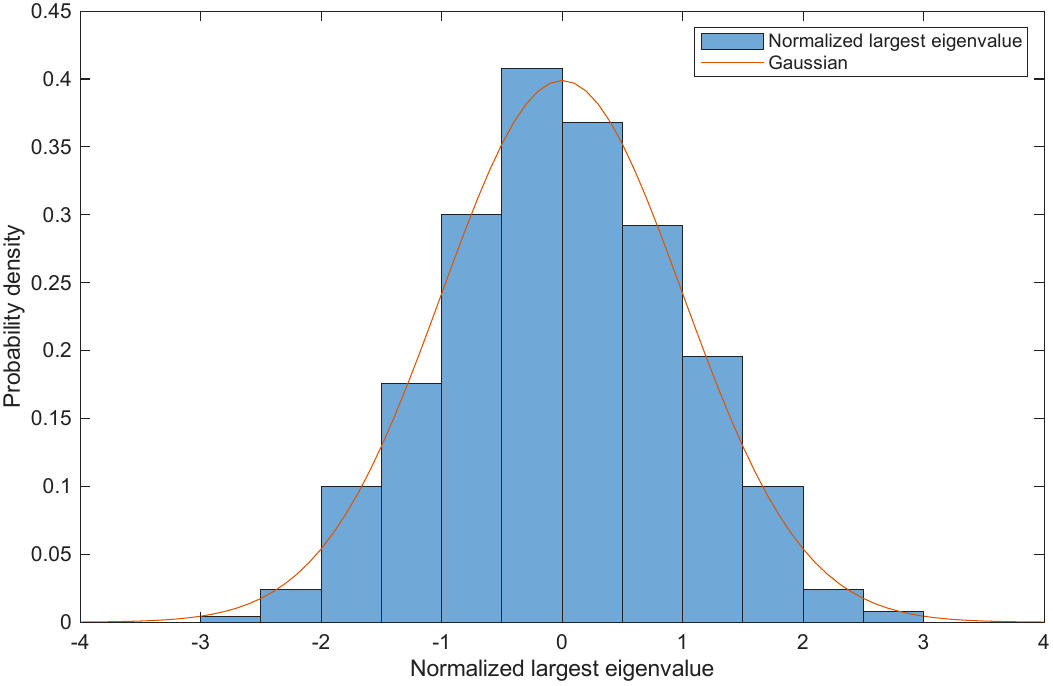}
\includegraphics[angle=0,width=0.32 \textwidth,height=0.2 \textheight]{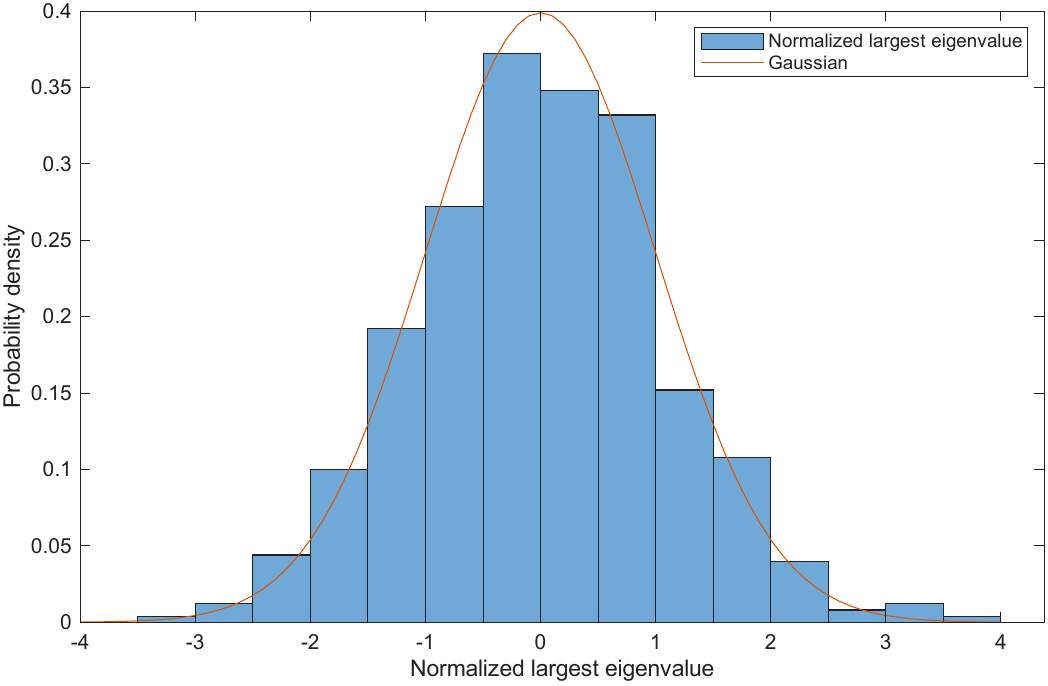}
\includegraphics[angle=0,width=0.32 \textwidth,height=0.2 \textheight]{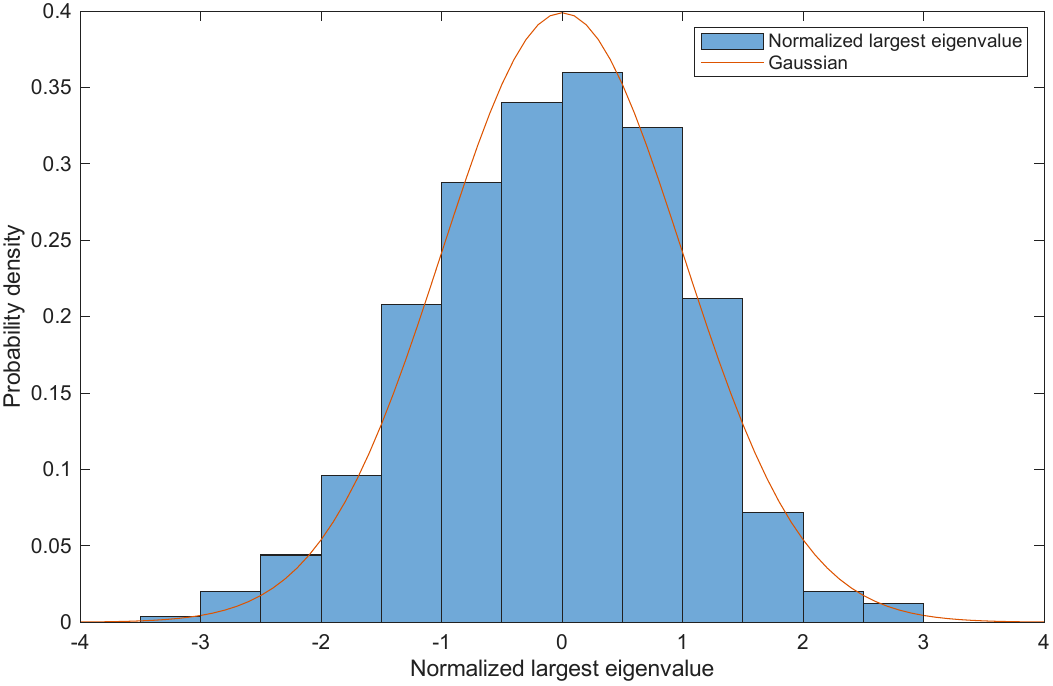}
\caption{PDF of normalized largest eigenvalue of 500 instants of $\widetilde{\mathcal{G}}_{\tilde{s}}$ based on $[31,10],[63,12]$ and $[127,14]$ binary Gold codes, with $(p,n)=(15,31), (31,63)$ and $(63,127)$.}\label{fig4}
\end{figure}

\subsection{Binary Simplex Codes $\mathscr{S}_m$ and First-Order Reed-Muller Codes $\mathrm{RM}(1,m)$}
Inspired by the simulation results of binary Gold codes for even $m$ with dual distance 3 only, it may be a natural question to ask whether the lower bound 5 can be further weakened to say 3 in the statements of Theorems \ref{mainthm} and \ref{mainthm2}. The answer is negative. We will explain in this section.

First, if we have $d^\bot=3$ or 4, by Lemma \ref{W}, we can only guarantee that $B_w(\mathcal{C})=O_w(n^{w-1})$, which is not good enough for the main intermediate Lemma \ref{lem} to hold. And in fact there do exist binary linear codes of dual distance 3 or 4 that achieve this bound tightly for all $w$, namely the class of binary simplex codes $\mathscr{S}_m$ and first-order Reed-Muller codes $\mathrm{RM}(1,m)$ respectively.

$\mathscr{S}_m$ can be constructed in a way in parallel to the Gold codes $\mathscr{G}_m$, but even simpler. It just consists of all PN sequences generated by a single degree $m$ binary primitive polynomial (these sequences happen to be cyclic shifts of one another), together with the zero vector. That is, $\mathscr{S}_m$ is an $[n=2^m-1,m]$ code, and all nonzero vectors have the same weight $2^{m-1}$. It also has good cross-correlation properties, and is commonly used in synchronization and scrambling.

Again we may write algebraically
$$\mathscr{S}_m=\{(\mathrm{Tr}_1^m(ux))_{x \in \mathbb{F}_{2^m}^\times}: u \in \mathbb{F}_{2^m}\}=\{(\mathrm{Tr}_1^m(u\alpha^i))_{i \in [0\isep 2^m-2]}: u \in \mathbb{F}_{2^m}\}$$
where $\alpha$ is a fixed primitive element of $\mathbb{F}_{2^m}$.

Our simulation on this class of codes takes $m=14$ for the ESD, and we again generate $p=\lfloor n/2\rfloor=2^{m-1}-1$ PN sequences (so $y \sim 0.5$), using the default primitive polynomial in \textbf{MATLAB}. As seen from Figure \ref{fig5}, the ESD indeed looks very different from the MP law. In fact the eigenvalues only take very few distinct values.

As for the (normalized) largest eigenvalue, we take $m=8$ and $p=\lfloor n/2\rfloor=2^{m-1}-1$ and again we run 500 instants of $\widetilde{\mathcal{G}}_{\tilde{s}}$. By Figure \ref{fig6}, we easily see that the shape of the PDF does not look like Gaussian. And in fact the experimental data also suggest that the variance of $\tilde{\lambda}_1$ is not close to $4y$ (but rather closer to $y$ instead).
\begin{figure}[htb!]
\includegraphics[angle=0,width=0.5 \textwidth,height=0.2 \textheight]{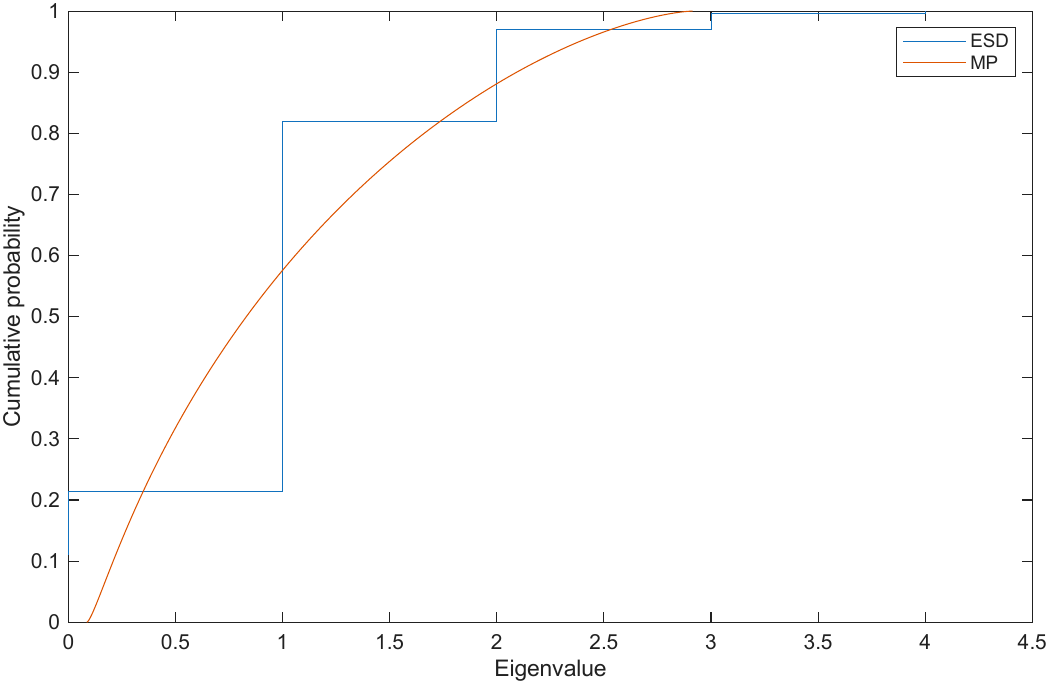}
\includegraphics[angle=0,width=0.5 \textwidth,height=0.2 \textheight]{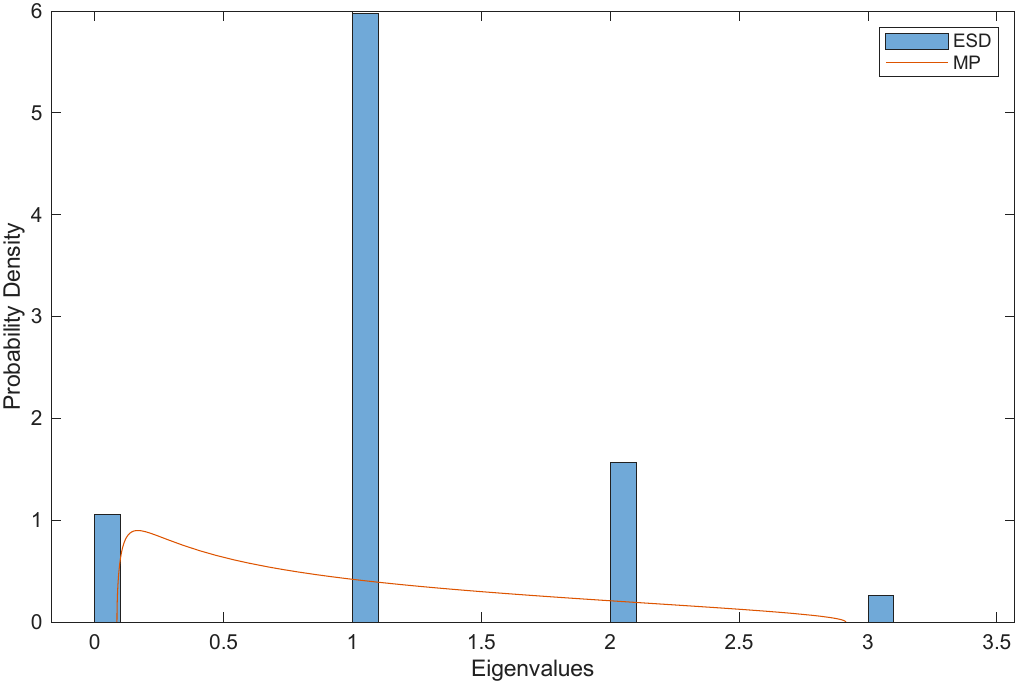}
\caption{ESD (CDF and PDF) of $\widetilde{\mathcal{G}}_{\tilde{s}}$ based on $[16383,14]$ binary simplex code, with $(p,n)=(8191,16383)$.}\label{fig5}
\end{figure}
\begin{figure}[htb!]
\begin{center}
\includegraphics[angle=0,width=0.5 \textwidth,height=0.2 \textheight]{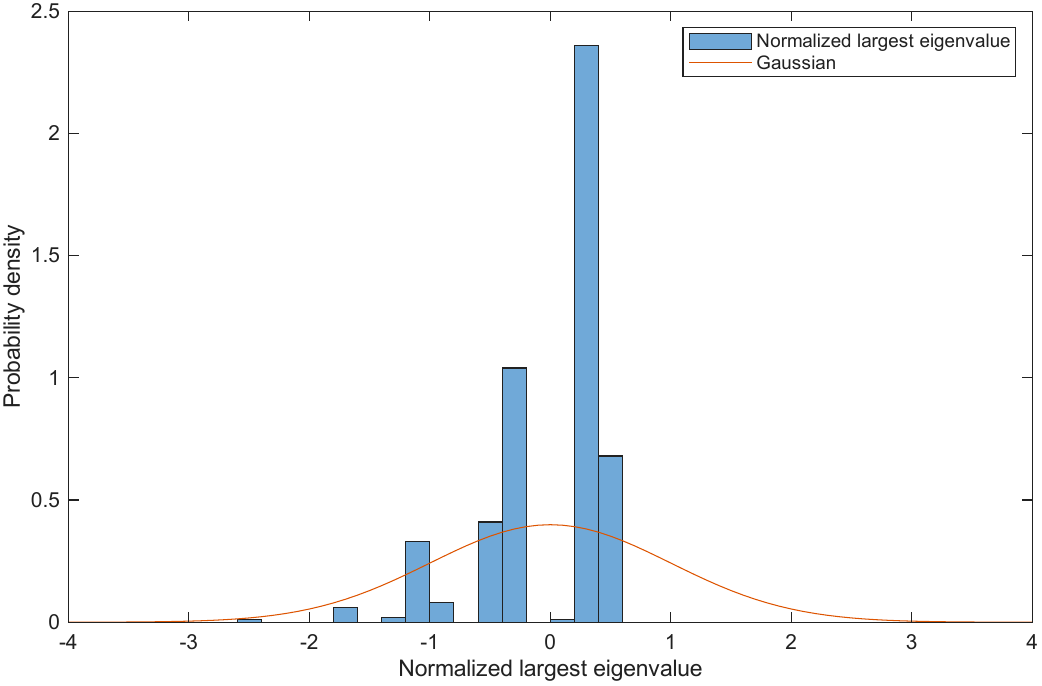}
\caption{PDF of normalized largest eigenvalue of 500 instants of $\widetilde{\mathcal{G}}_{\tilde{s}}$ based on $[255,8]$ binary simplex code, with $(p,n)=(127,255)$.}\label{fig6}
\end{center}
\end{figure}

$\mathrm{RM}(1,m)$ can be constructed as simply a lengthened simplex code, that is, we add an overall parity-check bit to (say the leftmost coordinate of) $\mathscr{S}_m$ (since all codewords in the latter have even weight, this parity-check bit is 0 for all these codewords), and then augment the all-one codeword $\mathbf{1}$ and linearize the code. That is,
\begin{equation}\label{RM}
\mathrm{RM}(1,m)=\{(0,\mathbf{c})+v\mathbf{1}: \mathbf{c} \in \mathscr{S}_m, v \in \mathbb{F}_2\}=\{(\mathrm{Tr}_1^m(ux)+v)_{x \in \mathbb{F}_{2^m}}: u \in \mathbb{F}_{2^m}, v \in \mathbb{F}_2\}
\end{equation}
is an $[n=2^m,m+1]$ code.

Again all codewords of $\mathrm{RM}(1,m)$, apart from $\mathbf{0}$ and $\mathbf{1}$, have the same weight $2^{m-1}$.

Our numerical simulation on $\mathrm{RM}(1,m)$ takes $m=14$ for the ESD and $m=8$ for the largest eigenvalue as well. The rows of the matrix $\widetilde{\Phi}_{\tilde{s}}$ are constructed as follows: still we generate $p=n/2=2^{m-1}$ PN sequences (so $y=0.5$). Moreover we generate $p$ single bits $v_i$. These $p$ bits correspond to the $v$ defined in (\ref{RM}). For $i \in [1\isep p]$, the first entry of the $i$-th row of $\widetilde{\Phi}_{\tilde{s}}$ is precisely $v_i$. If $v_i=0$, then the rest of the row are simply those from the corresponding PN sequence. If $v_i=1$ instead, then we flip all the bits in the corresponding PN sequence to form the rest of the row. By Figures \ref{fig7} and \ref{fig8} respectively, we see that the shapes of the (CDF and PDF of) ESD and PDF of the normalized largest eigenvalue $\tilde{\Lambda}_1$ indeed do not look like MP or Gaussian respectively.

These two counterexamples show that 5 is indeed the true optimal lower bound of $d^\bot$ to guarantee that Theorems \ref{mainthm} and \ref{mainthm2} hold.

\begin{figure}[htb!]
\includegraphics[angle=0,width=0.5 \textwidth,height=0.2 \textheight]{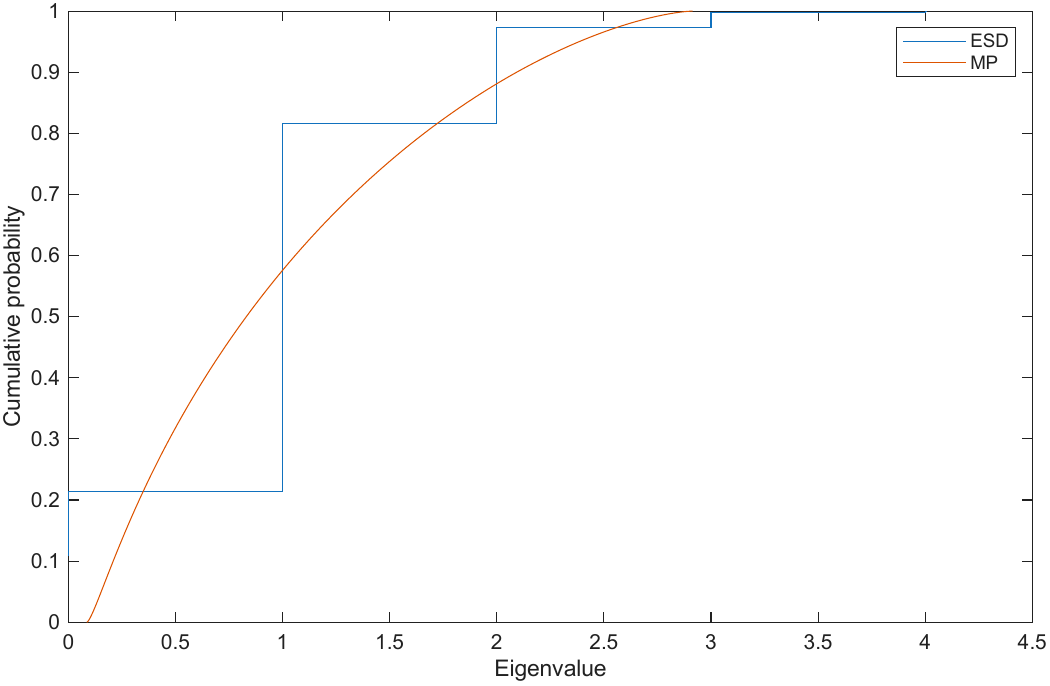}
\includegraphics[angle=0,width=0.5 \textwidth,height=0.2 \textheight]{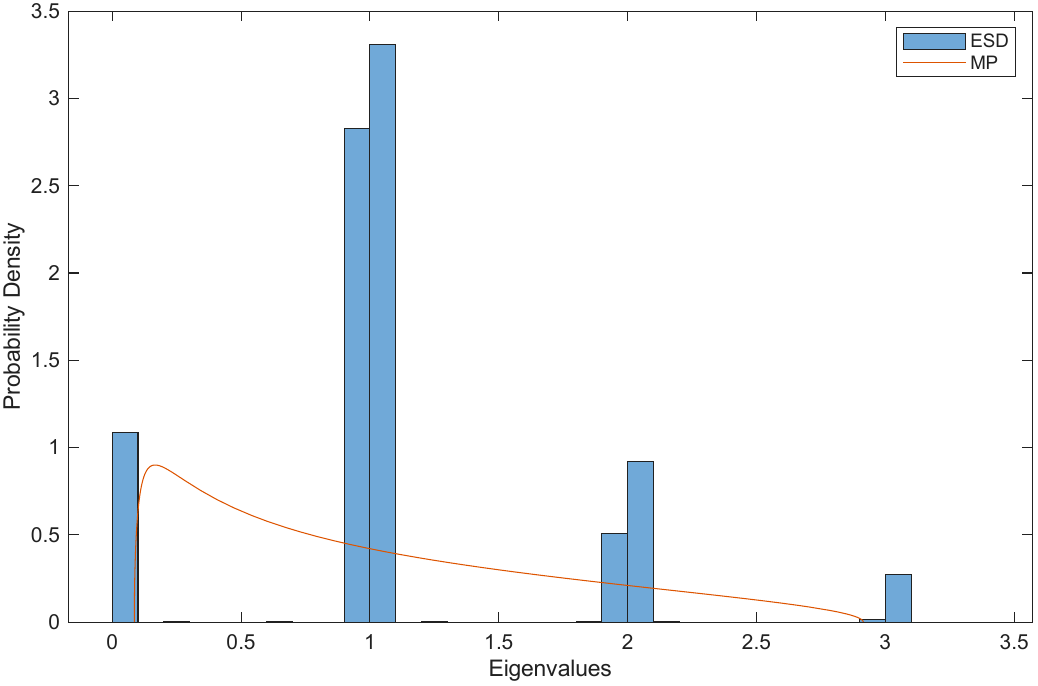}
\caption{ESD (CDF and PDF) of $\widetilde{\mathcal{G}}_{\tilde{s}}$ based on $[16384,15]$ binary first-order RM code, with $(p,n)=(8192,16384)$.}\label{fig7}
\end{figure}
\begin{figure}[htb!]
\begin{center}
\includegraphics[angle=0,width=0.5 \textwidth,height=0.2 \textheight]{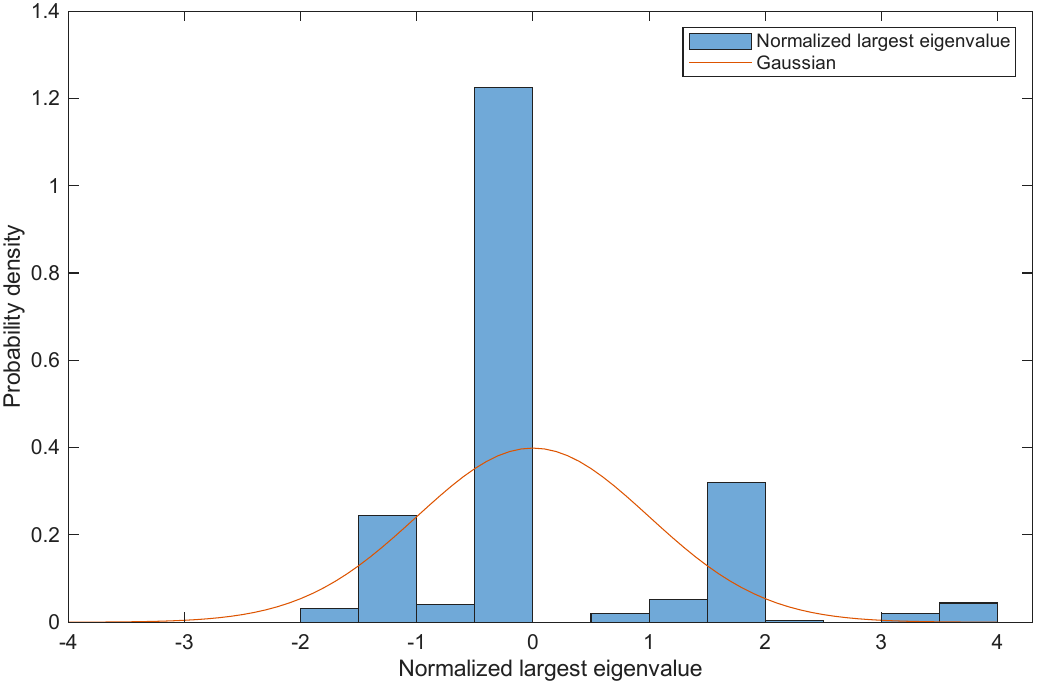}
\caption{PDF of normalized largest eigenvalue of 500 instants of $\widetilde{\mathcal{G}}_{\tilde{s}}$ based on $[256,9]$ binary first-order RM code, with $(p,n)=(128,256)$.}\label{fig8}
\end{center}
\end{figure}
\bibliographystyle{IEEEtranS}

\bibliography{CCH}

\end{document}